\documentclass{amsart}
\usepackage{amsmath}
\usepackage{amssymb}
\usepackage{array}
\usepackage{amscd}
\usepackage{tikz-cd}
\usepackage{float}

\numberwithin{equation}{section}
\newtheorem{Th}[subsection]{Theorem}
\newtheorem*{Th*}{Theorem}
\newtheorem{Lemma}[subsection]{Lemma}
\newtheorem{Prop}[subsection]{Proposition}

\newtheorem{Cor}[subsection]{Corollary}
\theoremstyle{definition}
\newtheorem{definition}[subsection]{Definition}
\newtheorem*{definition*}{Definition}
\newtheorem{Remark}[subsection]{Remark}

\newcommand{\comm}[1]{}
\newcommand\sbullet[1][.5]{\mathbin{\vcenter{\hbox{\scalebox{#1}{$\bullet$}}}}}

%
\usepackage{color}
\usepackage[normalem]{ulem}
\definecolor{DarkGreen}{rgb}{0,0.5,0.1} 

\newcommand\soutD{\bgroup\markoverwith
{\textcolor{DarkGreen}{\rule[.5ex]{2pt}{1pt}}}\ULon}

\makeatletter
\newcommand{\romsmall}[1]{\romannumeral #1}
\newcommand*{\rom}[1]{\expandafter\@slowromancap\romannumeral #1@}
\makeatother

 \begin{document}
 
\title[The most symmetric cubic surface over a  field of characteristic $2$]{The most symmetric smooth cubic surface over a~finite field of characteristic $2$}
\author{Anastasia V.~Vikulova}
\address{{\sloppy
\parbox{0.9\textwidth}{
Steklov Mathematical Institute of Russian
Academy of Sciences,
8 Gubkin str., Moscow, 119991, Russia
}\bigskip}}
\email{vikulovaav@gmail.com}
\date{}
\maketitle

\begin{abstract}
In this paper we find the largest automorphism group of a smooth cubic surface over any finite field of characteristic $2.$ We prove that if the order of the field is a power of~$4,$ then the automorphism group of maximal order of~a~smooth cubic surface over this field is $\mathrm{PSU}_4(\mathbb{F}_2).$ If  the order of the field of~characteristic~$2$ is not a power of $4,$ then we prove that the automorphism group of maximal order of a smooth cubic surface over this field is the symmetric group of degree $6.$ Moreover, we prove that smooth cubic surfaces with such properties are unique up to isomorphism.
\end{abstract}

\vspace{11pt}
	
	\noindent {\bf   Keywords.} Cubic surfaces, finite fields, automorphism groups.
	
	\vspace{11pt}

	\noindent {\bf Mathematics Subject Classification.} 14J50

\section{Introduction}

The Fermat cubic surface 
$$
 x^3+y^3+z^3+t^3=0
$$
\noindent was interesting to many mathematicians since ancient times, at least since Porisms of Diophantus, where he mentioned  that the difference of two cubes can be represented as a sum of two cubes (see~\cite[Chapter \rom{21}]{DicksonHistory} for more historical details). It was studied a lot in number theory and algebraic geometry. The Fermat cubic surface has the largest automorphism group among smooth cubic surfaces over an algebraically closed field of characteristic not equal to $3.$ This was proved by T.~Hosoh  in \cite[Theorem 5.3]{Hosoh} for fields of characteristic zero, and  by  I.~Dolgachev and A.~Duncan in~\cite[Theorem 1.1]{DolgachevDuncan} for fields of positive characteristics. In particular, we have the following theorem:

\begin{Th}[{\cite[Theorem 1.1]{DolgachevDuncan}}]\label{Th:DolgachevDuncan}
Let $S$ be a smooth cubic surface over an algebraically closed field $\mathbf{F}$ of characteristic $2.$ Then $|\mathrm{Aut}(S)| \leqslant 25\, 920.$ The equality holds if and only if~$\mathrm{Aut}(S)\simeq \mathrm{PSU}_4(\mathbb{F}_2)$ and $S$ is isomorphic to the Fermat cubic surface.
\end{Th}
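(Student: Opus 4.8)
The plan is to pass from the surface to the combinatorics of its $27$ lines, bound the automorphism group there, and then treat attainment and uniqueness separately.

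First, since $S$ is anticanonically embedded in $\mathbb{P}^3$, every automorphism extends to a projective transformation, so $\mathrm{Aut}(S)$ embeds in $\mathrm{PGL}_4(\mathbf{F})$; in particular $\mathrm{Aut}(S)$ has a faithful $4$-dimensional projective representation over $\mathbf{F}$. At the same time $\mathrm{Aut}(S)$ acts on $\mathrm{Pic}(S)\cong\mathbb{Z}^7$ preserving the intersection form and fixing $K_S$, hence acts on the root lattice $K_S^\perp\cong E_6$, giving $\rho\colon\mathrm{Aut}(S)\to W(E_6)$. I would check $\rho$ is injective: an automorphism acting trivially on $\mathrm{Pic}(S)$ fixes each of the $27$ lines, and since these span $\mathrm{Pic}(S)$ and determine the embedding it must be the identity. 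As $|W(E_6)|=51\,840=2\cdot 25\,920$ and every proper subgroup of $W(E_6)$ has order at most $25\,920$ by Lagrange, the whole upper bound reduces to the single statement that $\rho$ is \emph{not} surjective.

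To see that $W(E_6)$ itself is not realized I would combine the two pictures above: a surjective $\rho$ would make $W(E_6)$ a subgroup of $\mathrm{PGL}_4(\mathbf{F})$, i.e. would furnish a faithful $4$-dimensional projective representation of $W(E_6)$ in characteristic $2$. Its derived subgroup $W(E_6)'\cong\mathrm{PSU}_4(\mathbb{F}_2)$ has, up to Frobenius twist, only the natural unitary module $V$ and its dual $V^*$ as $4$-dimensional faithful modules over $\mathbf{F}$, and the outer element of $W(E_6)=\mathrm{PSU}_4(\mathbb{F}_2).2$ interchanges $V$ and $V^*\not\cong V$; hence no $4$-dimensional representation of $\mathrm{PSU}_4(\mathbb{F}_2)$ extends to $W(E_6)$, and $W(E_6)$ has no faithful $4$-dimensional projective representation. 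This forces $\rho$ to have image a proper subgroup, whence $|\mathrm{Aut}(S)|\le 25\,920$. I expect this representation-theoretic nonextension — the statement that the extra ``duality'' symmetry in $W(E_6)$ is not induced by any projective linear map — to be the main obstacle and the place where characteristic $2$ is genuinely used.

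For attainment I would use that $\mathbf{F}\supset\mathbb{F}_4$ and that $q+1=3$ for $q=2$, so $\sum_i x_i^3=\sum_i x_i\,x_i^{2}$ is exactly the Hermitian form of $\mathbb{F}_4/\mathbb{F}_2$; thus the Fermat surface is the Hermitian surface and $\mathrm{PGU}_4(\mathbb{F}_2)$ acts on it by linear transformations. Since $\gcd(4,q+1)=\gcd(4,3)=1$ one has $\mathrm{PGU}_4(\mathbb{F}_2)=\mathrm{PSU}_4(\mathbb{F}_2)$ of order $25\,920$, so together with the upper bound the automorphism group of the Fermat surface is squeezed to be exactly $\mathrm{PSU}_4(\mathbb{F}_2)$; this gives both the value $25\,920$ and the ``if'' direction. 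For uniqueness, if $|\mathrm{Aut}(S)|=25\,920$ then $\rho$ is an isomorphism onto the unique index-$2$ subgroup $W(E_6)'\cong\mathrm{PSU}_4(\mathbb{F}_2)$, so $\mathrm{Aut}(S)$ sits inside $\mathrm{PGL}_4(\mathbf{F})$ through its natural $4$-dimensional module; I would finish by computing that the space of $\mathrm{PSU}_4(\mathbb{F}_2)$-invariant cubic forms on this module is one-dimensional, spanned by the (smooth) Hermitian cubic, so that $S$ is projectively equivalent to the Fermat surface. The remaining checks — that the invariant cubic is unique and smooth, and that the finitely many choices of module differ only by a Frobenius twist and hence by an isomorphism over $\mathbf{F}$ — are routine representation theory.
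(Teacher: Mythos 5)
Your proposal cannot be compared line by line with the paper, because the paper does not prove this statement at all: it is imported verbatim from \cite[Theorem 1.1]{DolgachevDuncan}, and the only fragment of its proof reproduced in the paper is the attainment direction --- in characteristic $2$ the Fermat cubic is the Hermitian surface $\sum x_i\Phi(x_i)=0$, $\Phi(x)=x^2$, so $\mathrm{PGU}_4(\mathbb{F}_2)=\mathrm{PSU}_4(\mathbb{F}_2)$ acts on it --- which appears inside the proof of Proposition~\ref{prop:Fermat}. Your attainment step coincides with that; the rest of your argument is a genuinely different route from the one in \cite{DolgachevDuncan}, where the bound and the uniqueness come out of a full classification of automorphism groups (conjugacy classes in $W(\mathrm{E}_6)$ plus normal forms of equations). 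You instead use the two faithful actions $\mathrm{Aut}(S)\hookrightarrow W(\mathrm{E}_6)$ (Theorem~\ref{th:autinw}) and $\mathrm{Aut}(S)\hookrightarrow\mathrm{PGL}_4(\mathbf{F})$ together with $|W(\mathrm{E}_6)|=2\cdot 25\,920$, so that by Lagrange the bound reduces to the single claim that $W(\mathrm{E}_6)$ admits no faithful $4$-dimensional projective representation in characteristic $2$, and you get uniqueness from one-dimensionality of the space of invariant cubics; this is correct, and it buys a short proof of exactly this theorem at the price of leaning on modular representation theory of $\mathrm{PSU}_4(\mathbb{F}_2)$, whereas the classification approach yields the whole list of groups (which both \cite{DolgachevDuncan} and the present paper need elsewhere, e.g.\ in Lemma~\ref{lemma:allautgroupinpsu}). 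Since those representation-theoretic inputs carry the entire weight, you should record their justifications explicitly: (a) projective representations of $\mathrm{PSU}_4(\mathbb{F}_2)$ over $\overline{\mathbb{F}}_2$ lift to linear ones, because $H^2\bigl(\mathrm{PSU}_4(\mathbb{F}_2),\overline{\mathbb{F}}_2^{\,*}\bigr)=0$ (the Schur multiplier is $\mathbb{Z}/2\mathbb{Z}$ and $\overline{\mathbb{F}}_2^{\,*}$ has no $2$-torsion) --- you use this silently both in the bound and in the uniqueness step; (b) by Steinberg's tensor product theorem the only $4$-dimensional irreducible $\overline{\mathbb{F}}_2$-modules of $\mathrm{PSU}_4(\mathbb{F}_2)$ are the natural module $V$ and $V^*\cong V^{(2)}$, which are non-isomorphic (compare Brauer characters on $\mathrm{diag}(1,\omega,\omega,\omega)$ with $\omega^3=1$) and are swapped by the outer involution, as you assert; (c) in the uniqueness step, semi-invariance $g^*F=\lambda(g)F$ upgrades to invariance because $\mathrm{PSU}_4(\mathbb{F}_2)$ is perfect, and the claimed equality $\dim \mathrm{Sym}^3(V^*)^{\mathrm{PSU}_4(\mathbb{F}_2)}=1$ is indeed true and routine: as an $\mathrm{SL}_4$-module, $\mathrm{Sym}^3(V^*)$ has exactly two composition factors, namely the Steinberg product $V^*\otimes (V^*)^{(2)}$, whose restriction to the finite group is $\mathrm{End}(V)$ and hence has a one-dimensional space of invariants by Schur's lemma, and a copy of $V$, which has none; left exactness of invariants gives the upper bound $1$, and the Fermat cubic (smooth in characteristic $2$, since $\partial/\partial x_i$ of $\sum x_j^3$ is $x_i^2$) gives the lower bound. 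With these three points written out, your argument is a complete and valid alternative proof.
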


Note that from the proof of Theorem~\ref{Th:DolgachevDuncan} (see~\cite[\S5.1]{DolgachevDuncan} for more details) we get the following theorem about automorphism group of the Fermat cubic surface over~$\mathbb{F}_{4}.$

\begin{Th}[{\cite[\S5.1]{DolgachevDuncan}}]\label{th:autfermatf4}
Let $S$ be the Fermat cubic surface over $\mathbb{F}_4.$ Then we have 
$$
\mathrm{Aut}(S) \simeq \mathrm{PSU}_4(\mathbb{F}_2).
$$
\end{Th}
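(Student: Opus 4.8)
The plan is to leverage Theorem~\ref{Th:DolgachevDuncan} to pin down the abstract group, and then to check that the full automorphism group is already realized over $\mathbb{F}_4$ rather than only over the algebraic closure. Let $S$ denote the Fermat cubic surface over $\mathbb{F}_4$, and let $\overline{S} = S \times_{\mathbb{F}_4} \overline{\mathbb{F}_4}$ be its base change to the algebraic closure. By Theorem~\ref{Th:DolgachevDuncan} we already know that $\mathrm{Aut}(\overline{S}) \simeq \mathrm{PSU}_4(\mathbb{F}_2)$ of order $25\,920$. Since the automorphism group of $S$ over $\mathbb{F}_4$ injects into the automorphism group of $\overline{S}$ via base change, we have $\mathrm{Aut}(S) \hookrightarrow \mathrm{PSU}_4(\mathbb{F}_2)$. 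Thus the whole problem reduces to showing that every automorphism of $\overline{S}$ is in fact defined over $\mathbb{F}_4$, i.e.\ that this injection is surjective.

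First I would recall the standard description of automorphisms of a smooth cubic surface as linear automorphisms of the ambient $\mathbb{P}^3$, coming from the fact that the canonical class is the hyperplane class and the anticanonical embedding is unique up to $\mathrm{PGL}_4$. Concretely, every element of $\mathrm{Aut}(\overline{S})$ is induced by an element of $\mathrm{PGL}_4(\overline{\mathbb{F}_4})$ preserving the Fermat equation. The second, more combinatorial, step is to exhibit enough automorphisms with entries in $\mathbb{F}_4$ to generate the entire group of order $25\,920$. The key observation is that the Fermat cubic $x^3+y^3+z^3+t^3=0$ has two obvious families of symmetries over $\mathbb{F}_4$: the symmetric group $\mathfrak{S}_4$ permuting the four coordinates, and the diagonal scalings $(x,y,z,t)\mapsto(\zeta_1 x,\zeta_2 y,\zeta_3 z,\zeta_4 t)$ where each $\zeta_i$ is a cube root of unity. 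The crucial arithmetic point is that in characteristic $2$ the cube roots of unity lie precisely in $\mathbb{F}_4^{\times}$ (since $|\mathbb{F}_4^{\times}| = 3$), so \emph{all} these diagonal symmetries are already defined over $\mathbb{F}_4$, with no need to pass to a larger field.

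I would then count: the diagonal torus contributes $3^4/3 = 27$ elements in $\mathrm{PGL}_4$ (quotienting by the overall scalar, which is itself a cube root of unity), and together with the coordinate permutations $\mathfrak{S}_4$ this produces a subgroup of order $27\cdot 24 = 648$, all defined over $\mathbb{F}_4$. To reach the full $25\,920$ one must supply additional automorphisms not of monomial type; the standard source is the extra projective linear transformations mixing coordinates that exist precisely for the Fermat cubic, and the point is to verify that these can be written with coefficients in $\mathbb{F}_4$. Here I would match the known structure $\mathrm{PSU}_4(\mathbb{F}_2)$, whose natural $4$-dimensional unitary representation is defined over $\mathbb{F}_4$ (the unitary group $\mathrm{U}_4$ is defined relative to the quadratic extension $\mathbb{F}_4/\mathbb{F}_2$), and identify the Fermat cubic with the Hermitian-type invariant of that representation; this is exactly the content extracted from the proof of Theorem~\ref{Th:DolgachevDuncan} in \cite[\S5.1]{DolgachevDuncan}.

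The main obstacle I anticipate is precisely this last identification: showing that the non-monomial generators are defined over $\mathbb{F}_4$ and not merely over $\overline{\mathbb{F}_4}$. Equivalently, one must verify that the $\mathrm{Gal}(\overline{\mathbb{F}_4}/\mathbb{F}_4)$-action on $\mathrm{Aut}(\overline{S})$ is trivial, so that the injection $\mathrm{Aut}(S)\hookrightarrow\mathrm{PSU}_4(\mathbb{F}_2)$ is an isomorphism. The clean way to see this is the coincidence that $\mathrm{PSU}_4(\mathbb{F}_2)$ is the finite unitary group attached to the extension $\mathbb{F}_4/\mathbb{F}_2$ itself, so its defining representation has matrix entries in $\mathbb{F}_4$ by construction; once the Fermat surface is recognized as the invariant cubic for this representation, Galois descent is automatic and surjectivity follows. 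The rest is routine order-counting to confirm that no automorphism has been lost in the descent.
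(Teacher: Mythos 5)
Your proposal is correct and takes essentially the same route as the paper (recalled in the proof of Proposition~\ref{prop:Fermat}, following \cite[\S5.1]{DolgachevDuncan}): recognize the Fermat equation as the norm form $x\Phi(x)+y\Phi(y)+z\Phi(z)+t\Phi(t)$ of the Hermitian form attached to $\mathbb{F}_4/\mathbb{F}_2$, so that $\mathrm{PSU}_4(\mathbb{F}_2)$ acts on $S$ by matrices with entries in $\mathbb{F}_4$, and then conclude by the upper bound $|\mathrm{Aut}(S)|\leqslant |\mathrm{Aut}(S_{\overline{\mathbb{F}}_4})| = 25\,920$ from Theorem~\ref{Th:DolgachevDuncan}. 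Your intermediate construction of the monomial subgroup of order $648$ is harmless but unnecessary once the Hermitian identification is in place, since that identification alone already produces the whole group over $\mathbb{F}_4$.
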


The automorphism group of a smooth cubic surface over a finite field can be much smaller than the automorphism group of a smooth cubic surface over an~algebraic closure of the field. For example, there is the following result.

\begin{Th}[{\cite[Theorem 3 and Example 9]{Vikulova}}]\label{th:cubicoverf2}
Let $S$ be a smooth cubic surface over the field $\mathbb{F}_2$ of $2$ elements. Then $|\mathrm{Aut}(S)| \leqslant 720.$ Moreover,  $|\mathrm{Aut}(S)| = 720$ if and only if $\mathrm{Aut}(S) \simeq \mathfrak{S}_6$ and  $S$ is  isomorphic to the cubic surface defined by the equation
\begin{equation}\label{eq:cubicsurfacef2}
x^2t+y^2z+z^2y+t^2x=0.
\end{equation}
\end{Th}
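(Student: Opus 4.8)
The plan is to combine the two special features of the ground field $\mathbb{F}_2$: every automorphism of a smooth cubic surface is linear, and over $\mathbb{F}_2$ the relevant finite groups collapse to very small ones. Since $-K_S$ is very ample and defines the embedding $S\hookrightarrow\mathbb P^3_{\mathbb F_2}$, the group $\mathrm{Aut}(S)$ is exactly the stabiliser of the defining cubic form inside $\mathrm{PGL}_4(\mathbb F_2)=\mathrm{GL}_4(\mathbb F_2)\cong\mathfrak A_8$ (the scalars over $\mathbb F_2$ being trivial, $|\mathfrak A_8|=20160$). The target group is already visible here: $\mathfrak S_6\cong\mathrm{Sp}_4(\mathbb F_2)$ sits in $\mathrm{GL}_4(\mathbb F_2)$ with index $28$ as the stabiliser of a nondegenerate alternating form. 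I would then prove separately that the displayed surface satisfies $|\mathrm{Aut}(S)|\ge 720$ and that every smooth cubic satisfies $|\mathrm{Aut}(S)|\le 720$, and finally pin down the extremal group and surface.

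For the lower bound I would recast the cubic \eqref{eq:cubicsurfacef2} as $F(v)=(v^{(2)})^{\top}A\,v$, where $v=(x,y,z,t)$, $v^{(2)}=(x^2,y^2,z^2,t^2)$ is the coordinatewise Frobenius, and $A$ is the antidiagonal matrix, which over $\mathbb F_2$ is a nondegenerate alternating form. Because a matrix with entries in $\mathbb F_2$ commutes with $v\mapsto v^{(2)}$, one computes $F(gv)=(v^{(2)})^{\top}g^{\top}Ag\,v$, so $g$ preserves $F$ precisely when $g^{\top}Ag=A$, that is $g\in\mathrm{Sp}_4(\mathbb F_2)\cong\mathfrak S_6$. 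After checking smoothness — the partial derivatives are $(t^2,z^2,y^2,x^2)$, with no common projective zero — this gives $\mathfrak S_6\subseteq\mathrm{Aut}(S)$ for free, hence $|\mathrm{Aut}(S)|\ge 720$.

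For the upper bound I would pass to the configuration of the $27$ lines. The action on $\mathrm{Pic}(S_{\bar{\mathbb F}_2})$ embeds $\mathrm{Aut}(S_{\bar{\mathbb F}_2})$ into $W(E_6)$ and identifies $\mathrm{Aut}(S)$ with the centraliser $C_\Gamma(g)$, where $\Gamma$ is the image of the geometric automorphism group and $g\in W(E_6)$ is the Frobenius. A first reduction is that $S$ is never split over $\mathbb F_2$: a split cubic is the blow-up of $\mathbb P^2$ at six points in general position, but $\mathbb P^2(\mathbb F_2)$ has only seven points, among any six of which three are collinear; hence $g\ne 1$. By Theorem~\ref{Th:DolgachevDuncan} the only geometric type with $|\Gamma|>720$ is the Fermat surface, where $\Gamma\cong\mathrm{PSU}_4(\mathbb F_2)$ is the index-two simple subgroup of $W(E_6)$ (for every other type $|\Gamma|\le 720$ already suffices). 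For the Fermat case I would bound $|C_\Gamma(g)|$ over $g\ne 1$: if $g\in\Gamma$, its centraliser is a proper element centraliser in a simple group, of order below $720$; if $g$ is outer, then $g\in C_{W(E_6)}(g)\setminus\Gamma$, so $C_\Gamma(g)$ has index two in $C_{W(E_6)}(g)$, and the latter is maximal ($=1440$) exactly when $g$ is a reflection, whose orthogonal complement in $E_6$ is of type $A_5$, giving $C_{W(E_6)}(g)\cong\mathbb Z/2\times W(A_5)$ and hence $|C_\Gamma(g)|=720$ with $C_\Gamma(g)\cong\mathfrak S_6$. Combined with the lower bound this forces $|\mathrm{Aut}(S)|=720$ and $\mathrm{Aut}(S)\cong\mathfrak S_6$ for the extremal surface.

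I expect the upper bound to be the crux, and the delicate point is precisely the interaction between $\Gamma$ and $g$: the centraliser of a reflection in the full Weyl group has order $1440>720$, so one cannot read the answer off $W(E_6)$, and it is the passage to the simple subgroup $\mathrm{PSU}_4(\mathbb F_2)$ — equivalently, the fact that the reflection itself is the Frobenius rather than an automorphism — that cuts the bound down to $720$; getting this factor of two and the $A_5$ identification right is essential. The remaining effort is the classification input: confirming through the characteristic-$2$ list that no geometric type other than Fermat reaches $720$, including ruling out groups of order $\le 720$ with nontrivial centre (where a central Frobenius could inflate the centraliser). Uniqueness then follows because any extremal surface is geometrically Fermat with reflection Frobenius, and reflections form a single $W(E_6)$-class, so the corresponding twist is unique up to isomorphism; equivalently, all copies of $\mathrm{Sp}_4(\mathbb F_2)$ in $\mathrm{GL}_4(\mathbb F_2)$ are conjugate and the $\mathrm{Sp}_4(\mathbb F_2)$-invariant cubics yield a single smooth isomorphism class, which the symplectic description makes transparent.
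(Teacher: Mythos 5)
You should know at the outset that the paper does not prove Theorem~\ref{th:cubicoverf2} internally at all: it imports the statement from the cited reference, and in fact uses it as input later (the lower bound $|\mathrm{Aut}(S)|\geqslant 720$ in the proof of Lemma~\ref{lemma:maximal24kS6}, and the fact that cubic~\eqref{eq:cubicsurfacef2} admits a $\mathbb{Z}/5\mathbb{Z}$-action in the proof of Theorem~\ref{th:maximalaut2z/5z}). So there is no in-paper proof to compare against line by line; the closest analogue is the paper's proof of the generalization over $\mathbb{F}_{2^{2k+1}}$, which runs through a $\mathbb{Z}/5\mathbb{Z}$-equivariant blowdown of five skew lines to the Weil restriction $\mathrm{R}_{\mathbf{L}/\mathbf{F}}(\mathbb{P}^1)$ and the uniqueness of a five-point orbit there (Lemma~\ref{lemma:z/5zz/2zquadric}, Corollary~\ref{cor:z/5zz/2zquadric}, Proposition~\ref{prop:24^k}). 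Your route is genuinely different and, as far as I can verify, correct. The symplectic description is a real gain: since $g$ has entries in $\mathbb{F}_2$, indeed $(gv)^{(2)}=g\,v^{(2)}$, and because the monomials $x_i^2x_j$ are linearly independent, the stabilizer of $F(v)=(v^{(2)})^{\top}Av$ is \emph{exactly} $\{g\colon g^{\top}Ag=A\}=\mathrm{Sp}_4(\mathbb{F}_2)\simeq\mathfrak{S}_6$ — so you get existence, the extremal group, and (via the single conjugacy class of $\mathfrak{S}_6$ in $\mathrm{GL}_4(\mathbb{F}_2)\simeq\mathfrak{A}_8$ together with uniqueness of the invariant cubic, by Schur's lemma on the natural module and $\Lambda^3V\simeq V^{*}$) a uniqueness argument, all without touching descent. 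Your upper bound via $\mathrm{Aut}(S)\simeq C_\Gamma(g)$, with the inner/outer split in $W(\mathrm{E}_6)\simeq\mathrm{PSU}_4(\mathbb{F}_2)\rtimes\mathbb{Z}/2\mathbb{Z}$ and $C_{W(\mathrm{E}_6)}(r)\simeq\mathbb{Z}/2\mathbb{Z}\times W(\mathrm{A}_5)$ for a reflection $r$, is the same centralizer mechanism the paper deploys elsewhere (Lemmas~\ref{lemma:galoispsu4} and~\ref{cor:galoiss6}), and your key point — that an \emph{outer} Frobenius halves $1440$ down to $720$ — is exactly the right factor of two. What the paper's quadric method buys instead is uniformity over all $\mathbb{F}_{2^{2k+1}}$, where a direct enumeration of invariant cubics is no longer available.

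Two steps of yours need tightening, neither fatal. First, "by Theorem~\ref{Th:DolgachevDuncan} the only geometric type with $|\Gamma|>720$ is the Fermat surface" overstates that theorem, which only characterizes the value $25\,920$; you need the full characteristic-$2$ classification of Dolgachev--Duncan both to exclude geometric groups of order in the interval $(720,25\,920)$ and to rule out a hypothetical non-Fermat type with $|\Gamma|=720$ whose centralizer in $W(\mathrm{E}_6)$ is nontrivial (the paper leans on the same classification, equally silently, in Lemma~\ref{lemma:maximal24kS6}); since you flag this as needed input in your closing paragraph, it is a citation fix rather than a missing idea. Second, your cohomological uniqueness ("reflections form a single $W(\mathrm{E}_6)$-class, so the twist is unique") elides that forms of the geometric Fermat surface over $\mathbb{F}_2$ correspond to $\Gamma$-twisted conjugacy classes, equivalently to $\Gamma$-conjugacy classes of elements in the outer coset of $W(\mathrm{E}_6)$; a single $W(\mathrm{E}_6)$-class could a priori split under $\Gamma$-conjugation. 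It does not here, because a reflection $r$ lies in its own centralizer and is outer, so $C_{W(\mathrm{E}_6)}(r)\not\subset\Gamma$ — one line, but it should be said; alternatively, lean entirely on your symplectic uniqueness argument, which is the cleaner of your two proposals. Finally, the identification $\mathrm{Aut}(S)\simeq C_\Gamma(g)$ tacitly uses that $\mathrm{Aut}(S_{\overline{\mathbb{F}}_2})$ acts faithfully on $\mathrm{Pic}(S_{\overline{\mathbb{F}}_2})$ (Theorem~\ref{th:autinw}) together with Galois descent for automorphisms; add a sentence.
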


In this paper we find automorphism groups of smooth cubic surfaces  of maximal order over all finite fields $\mathbb{F}_{2^k}$  of order $2^k$ for all natural $k.$ So we prove the following theorem.

\begin{Th}\label{th:maximalaut2}
Let $S$ be a smooth cubic surface over a finite field $\mathbf{F}$ of characteristic~$2.$ 
\begin{enumerate}
\renewcommand\labelenumi{\rm (\roman{enumi})}
\renewcommand\theenumi{\rm (\roman{enumi})}

\item\label{th:maximalaut2F4^k}
If $\mathbf{F}=\mathbb{F}_{4^k},$ then $|\mathrm{Aut}(S)| \leqslant 25\, 920.$ Moreover, $|\mathrm{Aut}(S)| = 25\, 920$ if and only if
$$
\mathrm{Aut}(S) \simeq \mathrm{PSU}_4(\mathbb{F}_2)
$$
\noindent and $S$ is isomorphic to the Fermat cubic surface.

\item\label{th:maximalaut2F24^k}
If $\mathbf{F}=\mathbb{F}_{2^{2k+1}},$ then $|\mathrm{Aut}(S)| \leqslant 720.$ Moreover, $|\mathrm{Aut}(S)| = 720$ if and only if
$$
\mathrm{Aut}(S) \simeq \mathfrak{S}_6
$$
\noindent and $S$ is isomorphic to cubic surface~\eqref{eq:cubicsurfacef2}.

\end{enumerate}

\end{Th}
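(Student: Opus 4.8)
The plan is to reduce everything to the geometry over $\bar{\mathbf F}$ and then read off the $\mathbf F$-rational automorphisms from the action of Frobenius. I would use throughout the injection $\mathrm{Aut}(S)\hookrightarrow \mathrm{Aut}(S_{\bar{\mathbf F}})$ (an $\mathbf F$-automorphism trivial after base change is trivial), together with the standard realisation of $\mathrm{Aut}(S_{\bar{\mathbf F}})$ as a subgroup of $W(E_6)$ acting on the $27$ lines (equivalently on $K_S^{\perp}\subset\mathrm{Pic}(S_{\bar{\mathbf F}})$), recalling that $W(E_6)\cong \mathrm{PSU}_4(\mathbb F_2).2$ has order $51840$ with simple index-$2$ subgroup $W(E_6)^{+}\cong\mathrm{PSU}_4(\mathbb F_2)$. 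The key principle is that a geometric automorphism descends to $\mathbf F$ exactly when it commutes with the Frobenius, so $\mathrm{Aut}(S)$ is the fixed subgroup of the Frobenius action; for a form of a surface defined over a small field this becomes a centraliser computation inside $W(E_6)$.

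For part \ref{th:maximalaut2F4^k} the bound $|\mathrm{Aut}(S)|\leqslant 25\,920$ is immediate from the injection and Theorem~\ref{Th:DolgachevDuncan}. If equality holds then $|\mathrm{Aut}(S_{\bar{\mathbf F}})|=25\,920$, so Theorem~\ref{Th:DolgachevDuncan} forces $S_{\bar{\mathbf F}}\cong$ Fermat with $\mathrm{Aut}(S_{\bar{\mathbf F}})\cong\mathrm{PSU}_4(\mathbb F_2)$, and comparing orders gives $\mathrm{Aut}(S)=\mathrm{Aut}(S_{\bar{\mathbf F}})$. Thus $S$ is a form of the Fermat cubic all of whose geometric automorphisms are $\mathbf F$-rational; by Theorem~\ref{th:autfermatf4} these are already defined over $\mathbb F_4\subseteq\mathbb F_{4^k}$, so the Galois action on $\mathrm{PSU}_4(\mathbb F_2)$ is trivial, the form is given by a twisting element $g$ with $\mathrm{Aut}(S)\cong C_{\mathrm{PSU}_4(\mathbb F_2)}(g)$, and $|C(g)|=25\,920$ forces $g$ central, hence $g=1$ by simplicity, i.e. $S\cong$ Fermat. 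Conversely Fermat over $\mathbb F_4$ attains the full group by Theorem~\ref{th:autfermatf4}, and base-changing along $\mathbb F_4\hookrightarrow\mathbb F_{4^k}$ keeps every automorphism defined.

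For part \ref{th:maximalaut2F24^k} the extra input is the classification in \cite{DolgachevDuncan}: in characteristic $2$ only the Fermat cubic has $|\mathrm{Aut}(S_{\bar{\mathbf F}})|\geqslant 720$, the next order being $648$. Hence $|\mathrm{Aut}(S)|\geqslant 720$ forces $S_{\bar{\mathbf F}}\cong$ Fermat, so $S$ is a form of the Fermat cubic over $\mathbf F=\mathbb F_{2^{2k+1}}$. Here the parity enters: the $27$ lines of Fermat require the cube roots of unity, and $\mathbb F_4\not\subseteq\mathbb F_{2^{2k+1}}$ since $3\nmid 2^{2k+1}-1$, so the arithmetic Frobenius acts on $\mathrm{PSU}_4(\mathbb F_2)$ through the generator of $\mathrm{Gal}(\mathbb F_4/\mathbb F_2)$, which induces the non-trivial outer automorphism. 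Writing $w\in W(E_6)$ for the image of Frobenius, I get $\mathrm{Aut}(S)\cong C_{W(E_6)^{+}}(w)$, where $w$ lies in the non-identity coset $W(E_6)\setminus W(E_6)^{+}$ for every such form.

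The heart of the matter is then the computation inside $W(E_6)$: over all $w$ in the non-trivial coset, the centraliser $C_{W(E_6)^{+}}(w)$ is maximal, with value $720$, exactly for $w$ a reflection, where $C_{W(E_6)}(w)=\langle w\rangle\times W(A_5)\cong \mathbb Z/2\times\mathfrak S_6$ ($A_5$ being the roots orthogonal to the reflecting root) meets $W(E_6)^{+}$ in a copy of $\mathfrak S_6$. Since all reflections of $E_6$ are conjugate, this isolates a single twisted-conjugacy class, hence a unique form, giving $|\mathrm{Aut}(S)|\leqslant 720$ with equality iff $\mathrm{Aut}(S)\cong\mathfrak S_6$. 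I would identify this extremal form with surface~\eqref{eq:cubicsurfacef2}: by Theorem~\ref{th:cubicoverf2} it realises $\mathfrak S_6$ already over $\mathbb F_2$, so its $\mathbb F_2$-Frobenius is a reflection, and base-changing to $\mathbb F_{2^{2k+1}}$ replaces this involution by its $(2k+1)$-st power, namely itself, so \eqref{eq:cubicsurfacef2} still attains $\mathfrak S_6$ and represents the unique extremal class. I expect the main obstacles to be exactly this group theory — showing Frobenius lands in the outer coset and that reflections uniquely maximise the centraliser there — together with the bookkeeping needed to match the abstract extremal form with \eqref{eq:cubicsurfacef2} rather than with the naive Fermat equation, whose $\mathbb F_2$- and $\mathbb F_{2^{2k+1}}$-automorphism groups are strictly smaller.
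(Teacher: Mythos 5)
Your proof strategy is sound, but your uniqueness step is genuinely different from the paper's. The paper obtains the bounds essentially as you do (Theorem~\ref{Th:DolgachevDuncan}, the maximal subgroups of $\mathrm{PSU}_4(\mathbb{F}_2)$ from Lemma~\ref{lemma:maxsubgroupspsu4}, non-triviality of the Galois image from Lemma~\ref{lemma:f24kgaloisgroupisnontrivial}), but it never invokes twisted forms: instead it proves the stronger Theorem~\ref{th:maximalaut2z/5z}, assuming only $\mathbb{Z}/5\mathbb{Z}\subset\mathrm{Aut}(S)$, by producing a $\mathbb{Z}/5\mathbb{Z}$-orbit of five skew lines in quadratic position (Lemmas~\ref{lemma:5linesisorbit} and~\ref{lemma:z/5zz/2zquadric}), blowing it down equivariantly to $\mathbb{P}^1\times\mathbb{P}^1$ or to $\mathrm{R}_{\mathbf{L}/\mathbf{F}}(\mathbb{P}^1)$ according to the Picard rank, and showing that the resulting five-point orbit in general position on the quadric is unique up to automorphisms of the quadric (Propositions~\ref{prop:4^kwithfifth}, \ref{prop:4^kwithoutfifth} and~\ref{prop:24^k}). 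Your cohomological route --- forms of the Fermat cubic correspond to Frobenius-twisted conjugacy classes, $\mathrm{Aut}(S)\simeq C_{W(\mathrm{E}_6)^{+}}(w)$ for $w$ the image of Frobenius, then maximize the centralizer over the relevant coset --- is more systematic in one respect: it determines the automorphism groups of all forms at once, and your observation that $w^{2k+1}=w$ for a reflection explains transparently why surface~\eqref{eq:cubicsurfacef2} stays extremal over every $\mathbb{F}_{2^{2k+1}}$; on the other hand it does not yield Theorem~\ref{th:maximalaut2z/5z}, and it needs three inputs you only sketch, all fillable: (a) that Frobenius lands in the outer coset for \emph{every} form --- this holds because twisting changes the Galois action on $\mathrm{Aut}(S_{\overline{\mathbf{F}}})$ only by inner automorphisms, while for the Fermat surface itself the action is the field automorphism of $\mathrm{PSU}_4(\mathbb{F}_2)$, which is outer; equivalently, by Lemma~\ref{lemma:galoispsu4} the centralizer of $W(\mathrm{E}_6)^{+}$ in $W(\mathrm{E}_6)$ is trivial, so conjugation by an element of the non-trivial coset is never inner; (b) that $720$ is the \emph{strict} maximum of $|C_{W(\mathrm{E}_6)^{+}}(w)|$ over the entire outer coset, which requires Carter's table for all determinant~$-1$ classes, not only involutions (Table~\ref{table:conjugacyclassese6} covers $A_1$, $A_1^3$ and $A_4\times A_1$, with centralizer orders $1440$, $96$ and $10$; the determinant~$-1$ classes of orders $4$, $6$, $8$, $12$ must also be checked); (c) the descent formalism itself (effectivity of Galois descent, and that $\mathrm{Aut}$ of a twist is the twisted centralizer), standard over finite fields. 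Finally, note that both you and the paper rely on more of Dolgachev--Duncan than the quoted Theorem~\ref{Th:DolgachevDuncan}, namely that the second largest geometric automorphism order in characteristic~$2$ is $648<720$, so that $|\mathrm{Aut}(S)|\geqslant 720$ already forces $S_{\overline{\mathbf{F}}}$ to be the Fermat cubic.
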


In the process of proving Theorem~\ref{th:maximalaut2} we get the following result.

\begin{Th}\label{th:maximalaut2z/5z}
Let $S$ be a smooth cubic surface over a finite field $\mathbf{F}$ of characteristic~$2.$ Denote by $\Gamma$ the image of the Galois group $\mathrm{Gal}(\overline{\mathbf{F}}/\mathbf{F})$ in the Weyl group~\mbox{$W(\mathrm{E}_6) \simeq \mathrm{Aut}(\mathrm{Pic}(S_{\overline{\mathbf{F}}})).$} Assume that 
$$
\mathbb{Z}/5\mathbb{Z} \subset \mathrm{Aut}(S).
$$
\begin{enumerate}
\renewcommand\labelenumi{\rm (\roman{enumi})}
\renewcommand\theenumi{\rm (\roman{enumi})}

\item\label{th:maximalaut2F4^kz/5z}
If $\mathbf{F}=\mathbb{F}_{4^k}$ and $\Gamma=\{e\},$ then  $S$ is isomorphic to the Fermat cubic surface.

\item\label{th:maximalaut2F24^kz/5z}
If $\mathbf{F}=\mathbb{F}_{2^{2k+1}}$ and $\Gamma \simeq \mathbb{Z}/2\mathbb{Z},$ then  $S$ is isomorphic to cubic surface~\eqref{eq:cubicsurfacef2}.

\end{enumerate}

\end{Th}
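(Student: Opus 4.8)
The plan is to reduce the statement to the classification of $\mathbf{F}$-forms of the Fermat cubic and then to a short computation with involutions in $W(\mathrm{E}_6)$. First I would use the order-$5$ automorphism to pin down $S$ geometrically. Since
\[
\mathbb{Z}/5\mathbb{Z}\subseteq\mathrm{Aut}(S)\subseteq\mathrm{Aut}(S_{\overline{\mathbf{F}}}),
\]
the order of $\mathrm{Aut}(S_{\overline{\mathbf{F}}})$ is divisible by $5$. Inspecting the list of automorphism groups of smooth cubic surfaces in characteristic $2$ underlying Theorem~\ref{Th:DolgachevDuncan}, this order is divisible by $5$ only for the Fermat cubic; hence $S_{\overline{\mathbf{F}}}$ is isomorphic to the Fermat cubic $S_0$ and $\mathrm{Aut}(S_{\overline{\mathbf{F}}})\simeq\mathrm{PSU}_4(\mathbb{F}_2)=W(\mathrm{E}_6)^{+}$, the simple subgroup of index $2$. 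Thus $S$ is an $\mathbf{F}$-form of $S_0$, the action of $\mathrm{Aut}(S_{\overline{\mathbf{F}}})$ on $\mathrm{Pic}(S_{\overline{\mathbf{F}}})$ is the faithful embedding $\mathrm{PSU}_4(\mathbb{F}_2)\hookrightarrow W(\mathrm{E}_6)$, and the image $\overline{\sigma}\in W(\mathrm{E}_6)$ of the order-$5$ automorphism commutes with $\Gamma$.

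Next I would describe all such forms. Over a finite field the $\mathbf{F}$-isomorphism classes of forms of $S_0$ are classified by the nonabelian cohomology $H^1(\mathrm{Gal}(\overline{\mathbf{F}}/\mathbf{F}),\mathrm{Aut}(S_{0,\overline{\mathbf{F}}}))$, i.e.\ by Frobenius-twisted conjugacy classes, and the invariant attached to a form is exactly the conjugacy class of the Frobenius image $\Gamma\subseteq W(\mathrm{E}_6)$. The field now enters through the field of definition of $\mathrm{Aut}(S_0)$, which is $\mathbb{F}_4$. If $\mathbf{F}=\mathbb{F}_{4^k}\supseteq\mathbb{F}_4$, the Galois group acts trivially on $\mathrm{Aut}(S_{0,\overline{\mathbf{F}}})$, so $\Gamma\subseteq W(\mathrm{E}_6)^{+}$ and $H^1$ reduces to ordinary conjugacy classes. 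If $\mathbf{F}=\mathbb{F}_{2^{2k+1}}\not\supseteq\mathbb{F}_4$, Frobenius acts through the nontrivial element of $\mathrm{Gal}(\mathbb{F}_4/\mathbb{F}_2)$, which realises the outer automorphism of $\mathrm{PSU}_4(\mathbb{F}_2)$; consequently the generator of $\Gamma$ lies in the nontrivial coset $W(\mathrm{E}_6)\setminus W(\mathrm{E}_6)^{+}$ and acts with determinant $-1$ on the $6$-dimensional reflection representation.

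For part~\ref{th:maximalaut2F4^kz/5z} the hypothesis $\Gamma=\{e\}$, together with faithfulness of $\mathrm{Aut}(S_{0,\overline{\mathbf{F}}})\hookrightarrow W(\mathrm{E}_6)$, forces the twisting cocycle to be trivial, so $S$ is the split form $S_0$, i.e.\ the Fermat cubic. For part~\ref{th:maximalaut2F24^kz/5z} let $w$ generate $\Gamma$, an involution in $W(\mathrm{E}_6)\setminus W(\mathrm{E}_6)^{+}$; its $(-1)$-eigenspace then has odd dimension, equal to $1$ or $3$ since $\mathrm{E}_6$ contains at most four pairwise orthogonal roots. Because $\overline{\sigma}$ commutes with $w$ it preserves both eigenspaces of $w$, and since every nontrivial rational representation of $\mathbb{Z}/5\mathbb{Z}$ has dimension at least $4$, an involution whose eigenspaces both have dimension $3$ cannot commute with an element of order $5$. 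Hence $w$ is a reflection $s_\alpha$, and as $\mathrm{E}_6$ is simply laced all $36$ reflections are conjugate, so the form is unique up to isomorphism. Moreover $C_{W(\mathrm{E}_6)}(s_\alpha)=\langle s_\alpha\rangle\times W(\mathrm{A}_5)$, since the roots orthogonal to $\alpha$ form a subsystem of type $\mathrm{A}_5$, so that $C_{W(\mathrm{E}_6)^{+}}(s_\alpha)\simeq\mathfrak{S}_6$. By Theorem~\ref{th:cubicoverf2} the surface~\eqref{eq:cubicsurfacef2} carries an automorphism of order $5$ inside its group $\mathfrak{S}_6$, so it too is a form of $S_0$, and its Frobenius image must be a reflection; base change to $\mathbb{F}_{2^{2k+1}}$ preserves this class because an odd power of an involution equals itself. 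Therefore $S$ is isomorphic to~\eqref{eq:cubicsurfacef2}.

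The decisive step — and the one I expect to be hardest — is the geometric rigidity: showing that in characteristic $2$ the Fermat cubic is the only smooth cubic surface admitting an automorphism of order $5$. Granting this, the remainder is bookkeeping with Galois cohomology over a finite field and the elementary representation-theoretic separation of the reflection class from the $\mathrm{A}_1^3$ class among the outer involutions of $W(\mathrm{E}_6)$; it is precisely the order-$5$ hypothesis that eliminates the latter class and thereby singles out the surface~\eqref{eq:cubicsurfacef2}.
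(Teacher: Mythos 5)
Your proposal is essentially sound, but it takes a genuinely different route from the paper. You reduce everything to two inputs: (a) the \emph{geometric} rigidity statement that in characteristic $2$ the Fermat cubic is the unique smooth cubic surface with an automorphism of order $5$, cited from the classification tables of Dolgachev--Duncan; and (b) descent, classifying $\mathbf{F}$-forms of the Fermat cubic by $H^1(\mathrm{Gal}(\overline{\mathbf{F}}/\mathbf{F}),\mathrm{PSU}_4(\mathbb{F}_2))$, i.e.\ by Frobenius-twisted conjugacy classes, and then pinning down the relevant class by the involution analysis in $W(\mathrm{E}_6)$. The paper does the opposite: it never invokes the full Dolgachev--Duncan tables or Galois cohomology, but instead blows down a $\mathbb{Z}/5\mathbb{Z}$-orbit of five skew lines equivariantly to a quadric (Lemmas~\ref{lemma:5linesisorbit} and~\ref{lemma:z/5zz/2zquadric}, Corollaries~\ref{cor:5linesisorbitequiv} and~\ref{cor:z/5zz/2zquadric}) --- $\mathbb{P}^1\times\mathbb{P}^1$ when $\Gamma$ is trivial, the Weil restriction $\mathrm{R}_{\mathbf{L}/\mathbf{F}}(\mathbb{P}^1)$ when $\Gamma\simeq\mathbb{Z}/2\mathbb{Z}$ --- and proves by explicit $\mathrm{PGL}_2$-computations (Propositions~\ref{prop:4^kwithfifth}, \ref{prop:4^kwithoutfifth} and~\ref{prop:24^k}) that the resulting five-point orbit in general position is unique up to automorphisms of the quadric. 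In particular the geometric uniqueness you flag as ``the decisive and hardest step'' is for the paper an \emph{output} of its method, not an input; your route buys brevity at the cost of leaning on the external classification (note that Theorem~\ref{Th:DolgachevDuncan} as quoted gives only the maximal order, so you must cite the full tables of that paper), while the paper's route is self-contained and simultaneously handles the arithmetic over each field.

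Two points in your descent bookkeeping need tightening, though both are fixable. First, forms are classified by \emph{twisted} conjugacy classes of cocycles, equivalently by $W(\mathrm{E}_6)^{+}$-conjugacy classes of elements $\rho(c)w$ in the outer coset (where $w$ is the Frobenius image for the split form); so ``all $36$ reflections are $W(\mathrm{E}_6)$-conjugate'' must be upgraded to conjugacy under the index-$2$ subgroup. This holds because a reflection $s$ lies in its own centralizer and $\det(s)=-1$, so $C_{W(\mathrm{E}_6)}(s)\not\subseteq W(\mathrm{E}_6)^{+}$ and the class does not split --- but you should say so, since a splitting here would mean two non-isomorphic forms with the same $\Gamma$. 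Second, in part~(i) you need the untwisted Frobenius to act trivially on $\mathrm{Pic}$ of the Fermat cubic over $\mathbb{F}_{4^k}$ (all $27$ lines are defined over $\mathbb{F}_4$), or alternatively derive $w=e$ from the trivial Galois action on $\mathrm{Aut}$ together with Lemma~\ref{lemma:galoispsu4}; only then does $\Gamma=\{e\}$ plus faithfulness of the action on $\mathrm{Pic}$ (Theorem~\ref{th:autinw}) kill the cocycle. Likewise, when matching surface~\eqref{eq:cubicsurfacef2}, its Frobenius image must be shown to be an involution and not an element of order $10$ in the class $A_4\times A_1$ (which also lies in the outer coset and commutes with an order-$5$ element, by Table~\ref{table:conjugacyclassese6}); this is where the paper's centralizer arguments (Lemmas~\ref{lemma:centralizerconjugacyclassA4} and~\ref{cor:galoiss6}) do real work. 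Your eigenspace argument separating $A_1$ from $A_1^3$ is correct and is a nice alternative to the paper's Corollary~\ref{lemma:a1commutewitha4}, which derives the same conclusion from Carter's eigenvalue data for the order-$10$ class.
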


The following proposition shows that over fields $\mathbb{F}_{2^{2k+1}}$ the Fermat cubic surface and cubic~\eqref{eq:cubicsurfacef2} are not isomorphic whereas over $\overline{\mathbb{F}}_2$ they are isomorphic.

\begin{Prop}[{cf.~Theorem~\ref{th:maximalaut2}\ref{th:maximalaut2F4^k}}]\label{prop:Fermat}
Let $\mathcal{F}$ be the Fermat cubic surface over a~field~$\mathbf{F}$ of characteristic~$2.$ Then if $\mathbf{F}=\mathbb{F}_{2^{2k+1}},$ we get $\mathrm{Aut}(\mathcal{F}) \simeq \mathbb{Z}/2\mathbb{Z} \times \mathfrak{S}_4.$ 
\end{Prop}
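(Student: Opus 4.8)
The plan is to compute $\mathrm{Aut}(\mathcal{F})$ over $\mathbf{F}=\mathbb{F}_{2^{2k+1}}$ by Galois descent from the known geometric automorphism group. First I would use the fact that every automorphism of a smooth cubic surface is induced by a linear transformation of the anticanonical $\mathbb{P}^3$, so that
$$\mathrm{Aut}(\mathcal{F})=\mathrm{Aut}\bigl(\mathcal{F}_{\overline{\mathbf{F}}}\bigr)^{\mathrm{Gal}(\overline{\mathbf{F}}/\mathbf{F})},$$
the subgroup of Galois-invariant projective classes. By Theorem~\ref{th:autfermatf4} the geometric group is $\mathrm{Aut}(\mathcal{F}_{\overline{\mathbf{F}}})\simeq\mathrm{PSU}_4(\mathbb{F}_2)$, and all of its elements are already defined over $\mathbb{F}_4$: writing $\bar a=a^2$ for the nontrivial element of $\mathrm{Gal}(\mathbb{F}_4/\mathbb{F}_2)$, the Fermat form is the Hermitian form $\sum_i x_i^3=\sum_i x_i\bar x_i$, and a matrix $A$ preserves it up to scalar precisely when $\bar A^{\mathsf T}A$ is scalar, i.e.\ $[A]\in\mathrm{PGU}_4(\mathbb{F}_2)\simeq\mathrm{PSU}_4(\mathbb{F}_2)$ (the two coincide since $\gcd(4,3)=1$).

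The key point is then to pin down the Galois action. Since the geometric automorphisms are defined over $\mathbb{F}_4$, the action of $\mathrm{Gal}(\overline{\mathbf{F}}/\mathbf{F})$ on them factors through $\mathrm{Gal}(\mathbb{F}_4/\mathbb{F}_2)$. Because $\mathbf{F}=\mathbb{F}_{2^{2k+1}}$ does not contain $\mathbb{F}_4$ — equivalently $2^{2k+1}\equiv 2\pmod 3$, so the $q$-power Frobenius restricts to $x\mapsto x^2$ on $\mathbb{F}_4$ — this action is the nontrivial one, sending a matrix $A$ to its entrywise conjugate $\bar A$. I would then solve $[\bar A]=[A]$: writing $\bar A=\mu A$ with $\mu\in\mathbb{F}_4^{\times}$ forces every nonzero entry of $A$ to equal $\mu$, so $[A]$ has a representative with entries in $\{0,1\}$. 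Combining this with the unitary condition $\bar A^{\mathsf T}A=cI$ (now with $\bar A=A$ and $c\in\mathbb{F}_2^{\times}$, hence $c=1$) shows that
$$\mathrm{Aut}(\mathcal{F})=\bigl\{\,[A]\in\mathrm{PGL}_4(\mathbb{F}_2):A^{\mathsf T}A=I\,\bigr\},$$
the orthogonal group of the standard bilinear form on $\mathbb{F}_2^4$.

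It remains to identify this finite group. I would describe it through its orthonormal bases: the columns of such an $A$ must be vectors of odd weight in $\mathbb{F}_2^4$ with pairwise even overlap, and a short combinatorial check shows that any such basis consists either entirely of the four standard vectors or entirely of the four weight-$3$ vectors, yielding exactly $48$ matrices. The permutation matrices give a copy of $\mathfrak{S}_4$, while the involution $M=J-I$ (with $J$ the all-ones matrix), acting by $x_i\mapsto x_i+\sum_j x_j$, is a non-monomial automorphism defined over $\mathbb{F}_2$ that commutes with every permutation; a direct expansion in characteristic $2$ confirms that $M$ preserves the Fermat form and that $(J-I)^2=I$. The weight-$3$ bases are exactly the columns of $M\cdot P_\sigma$, so $\mathrm{Aut}(\mathcal{F})=\langle M\rangle\times\mathfrak{S}_4\simeq\mathbb{Z}/2\mathbb{Z}\times\mathfrak{S}_4$. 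I expect the main obstacle to be the second paragraph — correctly establishing that the Galois action on $\mathrm{PSU}_4(\mathbb{F}_2)$ is the nontrivial conjugation and carrying out the fixed-point computation — since this is precisely the step that distinguishes the behaviour over $\mathbb{F}_{2^{2k+1}}$ from that over $\mathbb{F}_{4^k}$, where the action is trivial and the entire group $\mathrm{PSU}_4(\mathbb{F}_2)$ survives.
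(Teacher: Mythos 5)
Your proposal is correct and follows essentially the same route as the paper: both realize $\mathrm{Aut}(\mathcal{F}_{\overline{\mathbf{F}}})\simeq\mathrm{PSU}_4(\mathbb{F}_2)$ as the projective unitary group of the Hermitian form $\sum_i x_i\Phi(x_i)$ over $\mathbb{F}_4$, take fixed points under the (nontrivial) Frobenius action to reduce to $\{M\in\mathrm{GL}_4(\mathbb{F}_2):M^{\top}M=E\}$, and identify this group by the same column-weight combinatorics (all columns of weight $1$ or all of weight $3$) as $\mathbb{Z}/2\mathbb{Z}\times\mathfrak{S}_4$. Your treatment is if anything slightly more careful than the paper's, since you explicitly resolve the projective scalar ambiguity $\Phi(A)=\mu A$ and exhibit the generator $J-I$ of the $\mathbb{Z}/2\mathbb{Z}$ factor.
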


\begin{Cor}\label{cor:Fermat}
Let $\mathbf{F}$ be  a field of characteristic $2.$ Then

\begin{enumerate}
\renewcommand\labelenumi{\rm (\roman{enumi})}
\renewcommand\theenumi{\rm (\roman{enumi})}

\item\label{prop:Fermat24^k}
if $\mathbf{F}=\mathbb{F}_{2^{2k+1}},$ then smooth cubic surface \eqref{eq:cubicsurfacef2} is not isomorphic to the Fermat cubic surface;

\item\label{prop:Fermat4^k}
if $\mathbf{F}=\mathbb{F}_{4^k}$ or $\mathbf{F}=\overline{\mathbf{F}},$ then smooth cubic surface \eqref{eq:cubicsurfacef2} and the Fermat cubic surface are isomorphic.

\end{enumerate}

\end{Cor}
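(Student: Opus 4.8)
The plan is to treat the two parts separately, reducing part~\ref{prop:Fermat24^k} to a comparison of automorphism group orders and part~\ref{prop:Fermat4^k} to the classification of Hermitian forms over $\mathbb{F}_4$.

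For part~\ref{prop:Fermat24^k}, suppose $\mathbf{F}=\mathbb{F}_{2^{2k+1}}$. First I would pin down the order of the automorphism group of cubic~\eqref{eq:cubicsurfacef2}. Since this surface is defined over $\mathbb{F}_2 \subset \mathbb{F}_{2^{2k+1}}$, base change gives an injection of its automorphism group over $\mathbb{F}_2$ into its automorphism group over $\mathbb{F}_{2^{2k+1}}$; Theorem~\ref{th:cubicoverf2} then yields $|\mathrm{Aut}| \geqslant 720$, while Theorem~\ref{th:maximalaut2}\ref{th:maximalaut2F24^k} gives $|\mathrm{Aut}| \leqslant 720$, so the order is exactly $720$. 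On the other hand, Proposition~\ref{prop:Fermat} shows that the Fermat surface over $\mathbb{F}_{2^{2k+1}}$ has automorphism group $\mathbb{Z}/2\mathbb{Z} \times \mathfrak{S}_4$ of order $48$. As isomorphic surfaces have isomorphic automorphism groups and $720 \neq 48$, the two surfaces cannot be isomorphic over $\mathbb{F}_{2^{2k+1}}$.

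For part~\ref{prop:Fermat4^k}, the key observation is that over $\mathbb{F}_4$ both cubics are nondegenerate Hermitian surfaces for the extension $\mathbb{F}_4/\mathbb{F}_2$, whose conjugation is $u \mapsto u^2$. Indeed, the Fermat polynomial equals $\sum_i X_i X_i^2$ and corresponds to the identity Hermitian matrix, while $x^2t+y^2z+z^2y+t^2x$ corresponds to the antidiagonal Hermitian matrix; both are nondegenerate of rank $4$. Since over a finite field all nondegenerate Hermitian forms of a fixed rank are equivalent, there is $M \in \mathrm{GL}_4(\mathbb{F}_4)$ carrying one form to the other. The crucial point is that the induced substitution $X = MY$ realizes this equivalence at the level of polynomials: in characteristic $2$ one has $\big(\sum_j m_{ij} Y_j\big)^2 = \sum_j m_{ij}^2 Y_j^2$, so $X_i^2$ transforms by the Frobenius-conjugate matrix exactly as a conjugate coordinate does. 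Hence $M$ gives a linear isomorphism over $\mathbb{F}_4$ between the two surfaces. This isomorphism is defined over $\mathbb{F}_4$, which is contained in $\mathbb{F}_{4^k}$ and in any algebraically closed field $\overline{\mathbf{F}}$ of characteristic $2$, so it base changes to the required isomorphism in both cases.

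I expect the main obstacle to be part~\ref{prop:Fermat4^k}: one must correctly identify $x^2t+y^2z+z^2y+t^2x$ as a Hermitian form over $\mathbb{F}_4$, verify its nondegeneracy, and---most delicately---check that the semilinear equivalence of Hermitian forms descends to an honest linear substitution of the defining cubic polynomials, which hinges on the Frobenius identity above. Part~\ref{prop:Fermat24^k} is then essentially immediate once the order of the automorphism group of cubic~\eqref{eq:cubicsurfacef2} over $\mathbb{F}_{2^{2k+1}}$ is sandwiched between the two cited bounds. As an alternative route for the $\mathbb{F}_{4^k}$ case, one could instead show that cubic~\eqref{eq:cubicsurfacef2} attains the maximal order $25\,920$ and invoke the uniqueness in Theorem~\ref{th:maximalaut2}\ref{th:maximalaut2F4^k}, but the Hermitian argument has the advantage of covering $\overline{\mathbf{F}}$ uniformly.
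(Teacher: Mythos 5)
Your proposal is correct, and its second half takes a genuinely different route from the paper. For part~\ref{prop:Fermat24^k} you and the paper argue the same way, by comparing automorphism groups via Proposition~\ref{prop:Fermat}: the paper quotes the equality case of Theorem~\ref{th:maximalaut2}\ref{th:maximalaut2F24^k} to say $\mathrm{Aut}\simeq\mathfrak{S}_6$ for cubic~\eqref{eq:cubicsurfacef2}, while you sandwich the order between the base-changed lower bound $720$ coming from Theorem~\ref{th:cubicoverf2} and the upper bound of Theorem~\ref{th:maximalaut2}\ref{th:maximalaut2F24^k}; this is a negligible difference, and both readings use the fact (implicit in the cited theorems and in the paper's own usage) that surface~\eqref{eq:cubicsurfacef2} actually attains the bound. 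For part~\ref{prop:Fermat4^k} the paper simply exhibits an explicit collineation, $[x:y:z:t]\mapsto[x+y:\omega x+\omega^2y:z+t:\omega z+\omega^2t]$ with $\omega$ a primitive cube root of unity, whereas you identify both cubics as nondegenerate Hermitian surfaces for $\mathbb{F}_4/\mathbb{F}_2$ (identity versus antidiagonal Gram matrix) and invoke the classical fact that nondegenerate Hermitian forms over a finite field are classified by their rank. Your delicate step is handled correctly: if $M\in\mathrm{GL}_4(\mathbb{F}_4)$ realizes the equivalence of the two forms, then in characteristic $2$ the identity $\bigl(\sum_j m_{ij}Y_j\bigr)^2=\sum_j m_{ij}^2Y_j^2$ together with $\Phi(m)=m^2$ for $m\in\mathbb{F}_4$ shows that the substitution $X=MY$ carries $\sum_i X_i^3$ exactly to $\sum_{k,l}\bigl(M^{\top}\Phi(M)\bigr)_{kl}Y_kY_l^2$, which is the polynomial~\eqref{eq:cubicsurfacef2} when $M^{\top}\Phi(M)$ is the antidiagonal matrix; so the semilinear equivalence does descend to an honest linear isomorphism of the surfaces. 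What each approach buys: the paper's collineation is self-contained and verifiable by a short direct computation, while your argument is conceptual --- it explains why the isomorphism must exist (both surfaces are ``the'' Hermitian surface of $\mathbb{P}^3$ over $\mathbb{F}_4$, consistently with the $\mathrm{PSU}_4(\mathbb{F}_2)$-symmetry exploited in the proof of Proposition~\ref{prop:Fermat}) at the cost of citing, without proof, the classification of Hermitian forms over finite fields. Both constructions produce an isomorphism over $\mathbb{F}_4$, hence base change uniformly to $\mathbb{F}_{4^k}$ and to $\overline{\mathbf{F}}$, as you note.
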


Let us briefly describe the idea of the proof of Theorem~\ref{th:maximalaut2}.  We show that we can $\mathbb{Z}/5\mathbb{Z}$-equivariantly blow down $5$ skew lines all of which are defined over the field  such that we get either $\mathbb{P}^1 \times \mathbb{P}^1,$ or a quadric isomorphic to the Weil restriction of scalars of $\mathbb{P}^1.$ In the former case the rank of the Picard group of the blowdown is equal to $2,$ while in the latter case the rank of the Picard group of the blowdown is equal to $1.$  So we get $5$ points in general position. We show that such set of $5$ points on a quadric is unique up to automorphisms of the quadric.  So from the uniqueness of $5$ points in general position on quadrics up to automorphisms we get the uniqueness of smooth cubic surfaces up to isomorphism.

Now let us describe the structure of the paper.
In Section~\ref{section:projectiveline} we study subgroups of order $5$ in the automorphism group of $\mathbb{P}^1$ over finite fields of characteristic $2.$
In Section~\ref{section:quadricsurfaces} we study quadric surfaces and orbits of points on quadric surfaces under the action of the group of order $5.$  
In Section~\ref{section:Weyl} we study elements and subgroups of the Weyl group $W(\mathrm{E}_6).$
In Section~\ref{section:cubicsurface} we recall some basic facts about smooth cubic surfaces and discuss their automorphisms.
In Section~\ref{section:proofofmainresults} we prove Theorems~\ref{th:maximalaut2} and~\ref{th:maximalaut2z/5z}.
In Section~\ref{section:fermat} we discuss the Fermat cubic surface over finite fields of characteristic $2$ and prove Proposition~\ref{prop:Fermat} and Corollary~\ref{cor:Fermat}.

\vspace{0.5cm}

\textbf{Notation.} Let $X$ be a variety defined over a field $\mathbf{F}.$ If $\mathbf{F} \subset \mathbf{L}$ is an extension of $\mathbf{F},$ then we will denote by $X_{\mathbf{L}}$ the variety
$$
X_{\mathbf{L}}=X \times_{\mathrm{Spec}(\mathbf{F})}\mathrm{Spec}(\mathbf{L})
$$

\noindent over $\mathbf{L}.$ By $\overline{\mathbf{F}}$ we denote the algebraic closure of $\mathbf{F}.$ By $\mathbf{F}^{sep}$ we denote the separable closure of $\mathbf{F}.$ By $X(\mathbf{F})$ we denote $\mathbf{F}$-points on~$X.$ By $\mathfrak{S}_n$ we denote the symmetric group of degree $n,$ and by $\mathfrak{A}_n$ we denote the alternating group of degree $n.$

\vspace{5mm}

\textbf{Acknowledgment.} It is a pleasure for the author to express gratitude to C.~A.~Shramov for his care, for proposing this problem, a lot of discussion and constant attention. Also the author would like to thank A.~S.~Trepalin who taught the author his vision of cubic surfaces and their symmetries. Also the author thanks the referee for important remarks about the geometric explanation of some group-theoretical lemmas.

This work was performed at the Steklov International Mathematical Center and supported by the Ministry of Science and Higher Education of the Russian Federation (agreement no. 075-15-2022-265).  The author is a winner of the all-Russia mathematical August Moebius contest of graduate and undergraduate student papers and thanks the jury and the board for the high praise of his work.  The author was partially supported by Theoretical Physics and Mathematics Advancement Foundation “BASIS”.

\section{Projective line}\label{section:projectiveline}

In this section we prove some auxiliary lemmas concerning the automorphism group of the projective line over a finite field of characteristic $2.$ First of all, let us add the following obvious remark.

\begin{Remark}\label{remark:aboutfieldschar2}
The field $\mathbb{F}_{2^k}$ contains a non-trivial fifth root of unity if and only if~\mbox{$k=4l.$} Indeed, the former property is equivalent to the condition
$$
2^k-1 \equiv 0 \mod 5.
$$
\noindent This holds if and only if $k=4l.$ 
\end{Remark}

Let us give another obvious lemma about the fifth root of unity in finite fields of characteristic $2.$

\begin{Lemma}\label{lemma:xi+xi^4}
Let $\xi \in \overline{\mathbb{F}}_2$ be a non-trivial fifth root of unity. Then

\begin{enumerate}
\renewcommand\labelenumi{\rm (\roman{enumi})}
\renewcommand\theenumi{\rm (\roman{enumi})}

\item\label{lemma:xi+xi^4lie4k}
the elements $\xi^2+\xi^3$ and $\xi+\xi^4$ lie in $\mathbb{F}_{4^k};$ 

\item\label{lemma:xi+xi^4lnotie22k1}
the elements $\xi^2+\xi^3$ and $\xi+\xi^4$ do not lie in $\mathbb{F}_{2^{2k+1}};$

\item\label{lemma:xi+xi^4galois} 
if $\psi \in \mathrm{Gal}(\mathbb{F}_{4^{2k+1}}/\mathbb{F}_{2^{2k+1}}) \simeq \mathbb{Z}/2\mathbb{Z}$ is the generator of the Galois group, then 
$$
\psi(\xi+\xi^4)=\xi^2+\xi^3.
$$

\end{enumerate}

\end{Lemma}

\begin{proof}
First of all note that $4^k \equiv (-1)^k \mod 5.$ Using this we can prove assertions~\ref{lemma:xi+xi^4lie4k} and \ref{lemma:xi+xi^4lnotie22k1}. Indeed, assertion~\ref{lemma:xi+xi^4lie4k} follows from the equations
\begin{gather*}
(\xi+\xi^4)^{4^k}=\xi^{4^k}+\xi^{4^{k+1}}=\xi^{(-1)^k}+\xi^{(-1)^{k+1}}=\xi+\xi^4\\
(\xi^2+\xi^3)^{4^k}=\xi^{2\cdot 4^k}+\xi^{3\cdot 4^{k}}=\xi^{2\cdot(-1)^k}+\xi^{3\cdot(-1)^{k}}=\xi^2+\xi^3.
\end{gather*}

Assertions \ref{lemma:xi+xi^4lnotie22k1} and \ref{lemma:xi+xi^4lnotie22k1} follow from the fact that the Galois group of a finite extension of a finite field is generated by the Frobenius automorphism (see, for instance,~\cite[Chapter \rom{5}, Theorem 5.4]{Lang}).

\end{proof}

We shall now begin studying the automorphism group of projective line. We give some lemmas about subgroups of order $5$ in $\mathrm{PGL}_2(\mathbf{F})$ for a finite field $\mathbf{F}.$ First of all, let us start with the following obvious lemma.

\begin{Lemma}\label{cor:orderp1timesp1notdiv5}
The order of the group $\mathrm{PGL}_2(\mathbb{F}_{2^{2k+1}})$ is not divisible by~$5.$
\end{Lemma}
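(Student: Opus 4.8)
The plan is to compute $|\mathrm{PGL}_2(\mathbf{F})|$ explicitly and inspect its factorization at the prime $5$. For any finite field $\mathbb{F}_q$ one has
$$
|\mathrm{GL}_2(\mathbb{F}_q)| = (q^2-1)(q^2-q),
$$
and dividing by the order $q-1$ of the subgroup of scalar matrices gives
$$
|\mathrm{PGL}_2(\mathbb{F}_q)| = q(q-1)(q+1) = q(q^2-1).
$$
Setting $q=2^{2k+1}$, it therefore suffices to show that $5$ divides none of the three factors $q$, $q-1$, $q+1$.

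The factor $q=2^{2k+1}$ is a power of $2$ and hence coprime to $5$. For the remaining two factors I would track the residue of $2^{2k+1}$ modulo $5$. The multiplicative order of $2$ in $\mathbb{F}_5^\times$ is $4$ (the powers cycle as $2,4,3,1$), so the residue of $2^m$ modulo $5$ depends only on $m$ modulo $4$. Since $m=2k+1$ is odd, $m\equiv 1$ or $3\pmod 4$, whence $2^{2k+1}\equiv 2$ or $3\pmod 5$; in particular $q\not\equiv 1$ and $q\not\equiv -1\pmod 5$, which rules out $5\mid q-1$ and $5\mid q+1$. Equivalently, one can route this through Remark~\ref{remark:aboutfieldschar2}: the condition $5\mid q-1$ reads $2^{2k+1}-1\equiv 0\pmod 5$, forcing $2k+1\equiv 0\pmod 4$, impossible for an odd exponent; and $5\mid q+1$ would give $5\mid q^2-1=2^{4k+2}-1$, requiring $4k+2\equiv 0\pmod 4$, again impossible since $4k+2\equiv 2\pmod 4$.

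There is essentially no genuine obstacle here — the whole argument is a short modular-arithmetic computation — so the only point demanding care is not to overlook the factor $q+1$, since it is tempting to test divisibility only for $q$ and $q-1$ and forget that $5$ could a priori divide $q+1$.
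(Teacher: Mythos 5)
Your proof is correct and follows essentially the same route as the paper: both compute $|\mathrm{PGL}_2(\mathbb{F}_q)| = q(q^2-1)$ for $q=2^{2k+1}$ and check divisibility by $5$ via the residues of powers of $2$ modulo $5$ (the paper condenses this to the single congruence $2^{2k+1}(4^{2k+1}-1)\equiv(-1)^k \bmod 5$, using $4\equiv-1$, where your version splits off the factors $q-1$ and $q+1$ separately). The difference is purely presentational.
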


\begin{proof}
 The order of the group $\mathrm{PGL}_2(\mathbb{F}_{2^{2k+1}})$ is equal to
$$
2^{2k+1}(4^{2k+1}-1) \equiv (-1)^k \mod 5.
$$

\end{proof}

\begin{Lemma}\label{lemma:power5inpgl}
Let $\mathbf{F}$ be a finite field and consider the exact sequence
$$
1 \to \mathbf{F}^* \to \mathrm{GL}_2(\mathbf{F}) \xrightarrow{\rho} \mathrm{PGL}_2(\mathbf{F}) \to 1.
$$
\noindent  If $a \in  \mathrm{PGL}_2(\mathbf{F})$ satisfies $a^5=e,$ where $e$ is the neutral element in $ \mathrm{PGL}_2(\mathbf{F}),$ then there is a matrix $M \in \rho^{-1}(a)$ such that $M^5=E,$ where $E$ is the identity matrix.
\end{Lemma}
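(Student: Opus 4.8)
The plan is to lift the element $a$ of order dividing $5$ to a matrix whose fifth power is a scalar, and then rescale that scalar away. Concretely, I would begin by choosing any $N \in \rho^{-1}(a)$. Since $\rho(N^5) = a^5 = e$, the matrix $N^5$ lies in the kernel $\mathbf{F}^*$ of $\rho$, so $N^5 = \lambda E$ for some scalar $\lambda \in \mathbf{F}^*$. The goal is to replace $N$ by $M = \mu N$ for a suitable $\mu \in \mathbf{F}^*$; then $M^5 = \mu^5 \lambda E$, and I would like to choose $\mu$ so that $\mu^5 \lambda = 1$, i.e. so that $\lambda$ is a fifth power in $\mathbf{F}^*$.

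The key step is therefore to show that $\lambda$ is automatically a fifth power in $\mathbf{F}^*$. I would split into two cases according to whether $5 \mid |\mathbf{F}^*| = q - 1$, where $q = |\mathbf{F}|$. If $5 \nmid q-1$, then the fifth-power map $x \mapsto x^5$ is an automorphism of the cyclic group $\mathbf{F}^*$ (since $\gcd(5, q-1) = 1$), so \emph{every} element of $\mathbf{F}^*$ is a fifth power; in particular $\lambda$ is, and we are done. If $5 \mid q - 1$, I would instead use the determinant to pin down $\lambda$ directly: taking determinants in $N^5 = \lambda E$ gives $\det(N)^5 = \lambda^2$ (a $2 \times 2$ matrix), so $\lambda^2 = \det(N)^5$ is a fifth power, and since $\gcd(2,5)=1$ this forces $\lambda$ itself to be a fifth power. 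Thus in either case $\lambda = \nu^5$ for some $\nu \in \mathbf{F}^*$, and setting $\mu = \nu^{-1}$ yields $M = \mu N$ with $M^5 = \mu^5 \lambda E = E$, while still $\rho(M) = \rho(N) = a$.

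The main obstacle is the case $5 \mid q-1$, where the fifth-power map on $\mathbf{F}^*$ is not surjective and a scalar obstruction could genuinely survive; the determinant computation is what resolves it, exploiting that the matrices are $2 \times 2$ so that the exponent $2$ appearing is coprime to $5$. I would want to double-check the arithmetic $\det(N^5) = \det(N)^5$ and the cyclicity argument, but both are routine. One should also note that the lift $M$ need not have order exactly $5$ (if $a = e$ one may get $M = E$), but the statement only asserts $M^5 = E$, so this causes no trouble.
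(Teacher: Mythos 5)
Your proposal is correct, and in the essential case it takes a genuinely different route from the paper. Both arguments start the same way: lift $a$ to some $N\in\rho^{-1}(a)$, observe $N^5=\lambda E$ with $\lambda\in\mathbf{F}^*$, and when $5\nmid|\mathbf{F}^*|$ rescale using bijectivity of $x\mapsto x^5$ on $\mathbf{F}^*$. The divergence is in the case $5\mid q-1$: the paper argues by contradiction via the minimal polynomial, showing that if $\lambda$ were not a fifth power, then the minimal polynomial of $N$ would be a degree-$2$ divisor of $x^5-\lambda$, hence of the form $x^2-\sqrt[5]{\lambda}(\xi^i+\xi^j)x+\sqrt[5]{\lambda^2}\,\xi^{i+j}$, whose constant term does not lie in $\mathbf{F}$. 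Your determinant computation extracts exactly the same invariant (for a $2\times2$ matrix with degree-$2$ minimal polynomial, that constant term \emph{is} $\det N$), but directly and without leaving $\mathbf{F}$: taking determinants in $N^5=\lambda E$ gives $\lambda^2=\det(N)^5\in(\mathbf{F}^*)^5$, and since $2$ is invertible modulo $5$ — explicitly, $\lambda=(\lambda^2)^3\lambda^{-5}=\bigl(\det(N)^3\lambda^{-1}\bigr)^5$, an identity worth recording to close your ``routine'' step — one concludes $\lambda\in(\mathbf{F}^*)^5$. Your route buys brevity (no passage to $\overline{\mathbf{F}}$, no enumeration of factorizations of $x^5-c$) and extra generality: it works verbatim for $\mathrm{GL}_n$ whenever $\gcd(n,5)=1$, and in fact your two cases can be merged, since $\lambda^2\in(\mathbf{F}^*)^5$ always holds and the quotient $\mathbf{F}^*/(\mathbf{F}^*)^5$ has exponent dividing $5$. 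What the paper's more hands-on computation foreshadows is the explicit quadratic factorization of $x^5-1$ over $\mathbf{F},$ which it reuses immediately in Lemma~\ref{lemma:repwithoutxi4^k} to produce the normal form \eqref{eq2:representationwithout5root}; your scalar argument does not yield that normal form, but the lemma as stated does not require it.
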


\begin{proof}
Let us consider some matrix $M \in \rho^{-1}(a).$ We have 
\begin{equation}\label{eq:m^5}
M^5=c \cdot E,
\end{equation}
\noindent where $c \in \mathbf{F}^*.$

 We divide the proof into two cases. First of all, consider the case when $\mathbf{F}$ does not contain non-trivial fifth root of unity. Then the homomorphism
$$
f \colon \mathbf{F}^* \to \mathbf{F}^*
$$
\noindent which sends the element $x \in \mathbf{F}^*$ to $f(x)=x^5$ is an isomorphism due to triviality of the kernel of $f$ and finiteness of $\mathbf{F}^*.$  So we can multiply~$M$ by some scalar $m \in \mathbf{F}^*$ such that 
$$
m^5M^5=E.
$$

 Now consider the case when  $\mathbf{F}$ contains a non-trivial fifth root of unity. In this case~$f$ is not an isomorphism. This means that in equality \eqref{eq:m^5} the constant $c$ is not necessary contained in the image of $f.$ If $c$ lies in the image of $f,$ like in the first case we can find $m \in \mathbf{F}^*$ such that $m^5M^5=E$ and we are done. So assume that~$c$ does not lie in the image of $f.$ The minimal polynomial $g(x) \in \mathbf{F}[x]$ of $M$ has to be of degree $2,$ because $\sqrt[5]{c}\xi \notin \mathbf{F}^*$ for any fifth root of unity $\xi \in \mathbf{F}.$  Besides,~$g(x)$ divides $x^5-c.$ Therefore, it has to be of the form
$$
g(x)=x^2-\sqrt[5]{c}(\xi^i+\xi^j)x+\sqrt[5]{c^2}\xi^{i+j},
$$
\noindent where $i,j \in \{0,1,2,3,4\}$ and $i \neq j.$ However, $\sqrt[5]{c^2}\xi^{i+j}$ does not lie in $\mathbf{F}.$ So in this case the matrix $M \in \mathrm{GL}_2(\mathbf{F})$ which satisfies the equality \eqref{eq:m^5} does not exist. This means that $c$ should lie in the image of $f.$ So as in the previous case we get that~\mbox{$\widetilde{M}^5=E$} for some $\widetilde{M} \in \rho^{-1}(a).$

\end{proof}

\begin{Cor}\label{cor:power5inpgl}
Let $\mathbf{F}$ be a finite field and consider the exact sequence
$$
1 \to \mathbf{F}^* \to \mathrm{GL}_2(\mathbf{F}) \xrightarrow{\rho} \mathrm{PGL}_2(\mathbf{F}) \to 1.
$$
\noindent  Then for any subgroup $G \simeq \mathbb{Z}/5\mathbb{Z}$ in  $\mathrm{PGL}_2(\mathbf{F})$ there is a subgroup $\widetilde{G} \simeq \mathbb{Z}/5\mathbb{Z}$ in~$\mathrm{GL}_2(\mathbf{F})$ such that $\rho(\widetilde{G})=G.$ In addition, if $\mathbf{F}$ contains a non-trivial fifth root of unity $\xi,$ then up to conjugation the group $\widetilde{G}$ can be generated by $\bigl(\begin{smallmatrix}1 & 0\\ 0 & \xi\end{smallmatrix}\bigr).$ In particular, the subgroup $G$ is unique up to conjugation.
\end{Cor}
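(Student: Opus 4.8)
The plan is to build on Lemma~\ref{lemma:power5inpgl}, which already carries out the technical work of lifting a single element of order~$5$. First I would take a generator $a$ of $G$, so that $a^5=e$, and apply Lemma~\ref{lemma:power5inpgl} to produce a matrix $M \in \rho^{-1}(a)$ with $M^5=E$. The subgroup $\widetilde{G}=\langle M \rangle \subset \mathrm{GL}_2(\mathbf{F})$ then has order dividing $5$; since $\rho(M)=a$ has order $5$, the matrix $M$ is not scalar and in particular $M \neq E$, so $M$ has order exactly $5$ and $\widetilde{G} \simeq \mathbb{Z}/5\mathbb{Z}$ with $\rho(\widetilde{G})=G$. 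This settles the first assertion of the corollary.

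For the second assertion I would assume that $\mathbf{F}$ contains a non-trivial fifth root of unity $\xi$ and exploit that $M^5=E$ forces $M$ to be diagonalizable over $\mathbf{F}$. Indeed, the minimal polynomial of $M$ divides $x^5-1=(x-1)(x-\xi)(x-\xi^2)(x-\xi^3)(x-\xi^4)$, which splits into distinct linear factors over $\mathbf{F}$ because $\xi$ is a primitive fifth root of unity and $5$ is coprime to the characteristic. Hence $M$ is conjugate to $\mathrm{diag}(\xi^i,\xi^j)$ for some $i,j \in \{0,1,2,3,4\}$, and necessarily $i \neq j$, since otherwise $M$ would be scalar and $a=\rho(M)$ trivial. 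The key observation is then that $\mathrm{diag}(\xi^i,\xi^j)=\xi^i \cdot \mathrm{diag}(1,\xi^{j-i})$ has the same image in $\mathrm{PGL}_2(\mathbf{F})$ as $\mathrm{diag}(1,\xi^{j-i})$; replacing the lift $\widetilde{G}$ by $\langle \mathrm{diag}(1,\xi^{j-i}) \rangle$ (which has the same image $G$) and using that $\xi^{j-i}$ is again a primitive fifth root of unity, so that its powers run through all fifth roots of unity, I conclude that after conjugation $\widetilde{G}$ can be taken to be $\langle \mathrm{diag}(1,\xi) \rangle$.

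Finally, for the uniqueness of $G$ up to conjugation in $\mathrm{PGL}_2(\mathbf{F})$, I would apply the previous paragraph to two arbitrary subgroups $G_1,G_2 \simeq \mathbb{Z}/5\mathbb{Z}$ of $\mathrm{PGL}_2(\mathbf{F})$: each admits a lift that is conjugate in $\mathrm{GL}_2(\mathbf{F})$ to $\langle \mathrm{diag}(1,\xi) \rangle$. Pushing the two conjugating matrices down through $\rho$ shows that both $G_1$ and $G_2$ are conjugate in $\mathrm{PGL}_2(\mathbf{F})$ to the common image $\rho(\langle \mathrm{diag}(1,\xi) \rangle)$, hence to each other.

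I expect no serious obstacle here, since the genuine difficulty has already been absorbed into Lemma~\ref{lemma:power5inpgl}. The only point that requires care is the scalar-absorption step: one must notice that normalizing to the prescribed generator $\mathrm{diag}(1,\xi)$ is a statement in $\mathrm{PGL}_2(\mathbf{F})$ and may require passing from the lift $\langle M \rangle$ to the cosmetically different lift $\langle \mathrm{diag}(1,\xi^{j-i}) \rangle$, which shares the same image $G$ but has the cleaner generator.
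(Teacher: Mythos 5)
Your proposal is correct and follows essentially the same route as the paper's proof: lift a generator via Lemma~\ref{lemma:power5inpgl}, diagonalize the lift using that $\xi \in \mathbf{F}$, absorb the scalar $\xi^i$ to normalize the generator to $\bigl(\begin{smallmatrix}1 & 0\\ 0 & \xi\end{smallmatrix}\bigr)$ up to conjugation, and deduce uniqueness of $G$ up to conjugacy. You merely spell out more explicitly than the paper the scalar-absorption step (passing from $\langle M\rangle$ to the lift generated by $\mathrm{diag}(1,\xi^{j-i})$ with the same image in $\mathrm{PGL}_2(\mathbf{F})$), which the paper compresses into ``up to conjugation and multiplication by $\xi^r$.''
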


\begin{proof}
This immediately follows from the fact that by Lemma \ref{lemma:power5inpgl} for the group
$$
G \subset \mathrm{PGL}_2(\mathbf{F}),
$$
\noindent which is isomorphic to $\mathbb{Z}/5\mathbb{Z},$ there is a matrix $M \in \mathrm{GL}_2(\mathbf{F})$ of order $5$ such that~$\rho(M)$ is a generator of $G.$  Since $\xi$ lies in $\mathbf{F}$ the matrix $M$ is diagonalizable. So up to conjugation and multiplication by $\xi^r$ for an integer $r$ we have $M^s=\bigl(\begin{smallmatrix}1 & 0\\ 0 & \xi\end{smallmatrix}\bigr)$ for some $s=1,2,3,4.$   This means that the group of order $5$ in $\mathrm{PGL}_2(\mathbf{F})$ is unique up to conjugation.

\end{proof}

\begin{Lemma}\label{lemma:repwithoutxi4^k}
Assume that $\mathbb{F}_{4^k}$ does not contain non-trivial fifth root of unity. Then a subgroup $G$ of order $5$ in the group $\mathrm{PGL}_2(\mathbb{F}_{4^k})$ is generated up to conjugation by the matrix
\begin{equation}\label{eq2:representationwithout5root}
\begin{pmatrix}
\xi+\xi^4 & 1\\
1 & 0
\end{pmatrix},
\end{equation}
\noindent where $\xi \in \overline{\mathbb{F}}_2$ is a non-trivial fifth root of unity. In particular, the subgroup $G$ is unique up to conjugation.

\end{Lemma}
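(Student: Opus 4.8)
The plan is to descend to $\mathrm{GL}_2$, pin down the characteristic polynomial of a generator, and then recognise the matrix \eqref{eq2:representationwithout5root} as the corresponding companion matrix. By Corollary~\ref{cor:power5inpgl} there is a matrix $M \in \mathrm{GL}_2(\mathbb{F}_{4^k})$ with $M^5=E$ and $M\neq E$ whose image $\rho(M)$ generates $G$. Since $\mathbb{F}_{4^k}$ contains no non-trivial fifth root of unity (by Remark~\ref{remark:aboutfieldschar2} this is the case $k$ odd), the matrix $M$ is neither scalar nor diagonalizable over $\mathbb{F}_{4^k}$: a scalar or a non-identity unipotent lift cannot have order $5$, so the eigenvalues of $M$ are primitive fifth roots of unity lying in the quadratic extension of $\mathbb{F}_{4^k}$, and the characteristic polynomial of $M$ is irreducible over $\mathbb{F}_{4^k}$.

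Next I would determine this characteristic polynomial. Write the eigenvalues of $M$ as $\zeta$ and $\zeta'$, two primitive fifth roots of unity. Their product $\det(M)$ lies in $\mathbb{F}_{4^k}^*$ and is itself a fifth root of unity, hence equals $1$ by hypothesis; thus $\zeta'=\zeta^{-1}$. Since $M$ and each of its nontrivial powers generate the same cyclic group $\langle M\rangle$, and hence the same $G$ after applying $\rho$, I may replace $M$ by a suitable power $M^r$ with $\gcd(r,5)=1$ so that the eigenvalues become exactly $\xi$ and $\xi^4=\xi^{-1}$. Then $\mathrm{tr}(M)=\xi+\xi^4$, which belongs to $\mathbb{F}_{4^k}$ by Lemma~\ref{lemma:xi+xi^4}\ref{lemma:xi+xi^4lie4k}, while $\det(M)=1$, so the characteristic polynomial of $M$ is
$$
x^2+(\xi+\xi^4)x+1
$$
(using $-1=1$ in characteristic $2$).

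Finally, a direct computation shows that matrix \eqref{eq2:representationwithout5root} has exactly this characteristic polynomial. As the polynomial is irreducible, it coincides with the minimal polynomial of each matrix, so both $M$ and \eqref{eq2:representationwithout5root} are cyclic and therefore conjugate over $\mathbb{F}_{4^k}$ to the companion matrix of $x^2+(\xi+\xi^4)x+1$; hence they are conjugate to one another. Conjugating $\langle M\rangle$ accordingly and applying $\rho$ exhibits $G$ as a conjugate of the subgroup generated by the image of \eqref{eq2:representationwithout5root}, and the uniqueness of $G$ up to conjugation is then immediate. The one delicate point, which the power-substitution step above is designed to handle, is that the eigenvalues of an arbitrary lift $M$ could a priori yield the trace $\xi^2+\xi^3$ rather than $\xi+\xi^4$; passing to the correct power of $M$ normalizes the trace to $\xi+\xi^4$ without changing $G$.
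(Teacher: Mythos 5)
Your proof is correct and follows essentially the same route as the paper's: descend to $\mathrm{GL}_2(\mathbb{F}_{4^k})$ via Corollary~\ref{cor:power5inpgl}, identify the minimal polynomial as $x^2+(\xi+\xi^4)x+1$ using Lemma~\ref{lemma:xi+xi^4}\ref{lemma:xi+xi^4lie4k}, conclude by conjugacy to the companion matrix, and handle the ambiguity between the traces $\xi+\xi^4$ and $\xi^2+\xi^3$ by passing to a power of $M$ (the paper notes that $M\sim\bigl(\begin{smallmatrix}\xi+\xi^4 & 1\\ 1 & 0\end{smallmatrix}\bigr)$ if and only if $M^2\sim\bigl(\begin{smallmatrix}\xi^2+\xi^3 & 1\\ 1 & 0\end{smallmatrix}\bigr)$, which is exactly your power-substitution step). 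The only cosmetic difference is that the paper pins down the quadratic by factoring $x^5-1$ over $\mathbb{F}_{4^k}$, whereas you reach it through a determinant-and-trace argument on the eigenvalues.
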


\begin{proof}
Let $\sigma$ be a generator of the group $G \subset \mathrm{PGL}_2(\mathbb{F}_{4^k})$ of order $5.$ By Corollary~\ref{cor:power5inpgl} there is a matrix $M \in \mathrm{GL}_2(\mathbb{F}_{4^k})$ of order $5$ such that its image in $\mathrm{PGL}_2(\mathbb{F}_{4^k})$ by the natural projection is $\sigma.$ So the matrix $M$ should satisfy the equation
\begin{equation}\label{eq:x^5-1}
x^5-1=0.
\end{equation}
\noindent As the minimal polynomial of $M$ divides \eqref{eq:x^5-1} and $M$ is not a scalar matrix, the degree of this minimal polynomial is equal to $2.$   Let us fix a non-trivial fifth root of unity $\xi.$  We have
$$
x^5-1=(x-1)(x^2+(\xi+\xi^4)x+1)(x^2+(\xi^2+\xi^3)x+1).
$$
\noindent By Lemma~\ref{lemma:xi+xi^4}\ref{lemma:xi+xi^4lie4k} both $\xi+\xi^4$ and $\xi^2+\xi^3$ lie in $\mathbb{F}_{4^k}.$  Therefore, the matrix $M$ is conjugate to one of the matrices
$$
\begin{pmatrix}
\xi+\xi^4 & 1\\
1 & 0
\end{pmatrix} \; \text{or} \;
\begin{pmatrix}
\xi^2+\xi^3 & 1\\
1 & 0
\end{pmatrix},
$$

\noindent where the minimal polynomial of the first matrix is $x^2+(\xi+\xi^4)x+1$ and the minimal polynomial of the second matrix is $x^2+(\xi^2+\xi^3)x+1.$ Note that if $M$ is conjugate to $\bigl(\begin{smallmatrix}
\xi+\xi^4 & 1\\
1 & 0
\end{smallmatrix}\bigl),$ then $M^2$ is conjugate to $\bigl(\begin{smallmatrix}
\xi^2+\xi^3 & 1\\
1 & 0
\end{smallmatrix}\bigl),$ and vice versa.   Thus, up to conjugation the group of order $5$ in $\mathrm{PGL}_2(\mathbb{F}_{4^k})$ can be generated by the matrix~\eqref{eq2:representationwithout5root}, and so up to conjugation such group is unique.

\end{proof}

\section{Quadric surfaces}\label{section:quadricsurfaces}
In this section we study smooth quadric surfaces in $\mathbb{P}^3$ over finite fields of characteristic~$2.$  First of all, we refer the reader to~\cite[\S 8.1]{ShramovGor} or~\cite[Section 1.3]{Weil}  for the definition of the Weil restriction of scalars. We recall the result from the paper~\cite{ShramovVologodsky} about the classification of quadric surfaces in $\mathbb{P}^3.$

\begin{Lemma}[{cf.~\cite[Lemma 7.3]{ShramovVologodsky}}]\label{lemma:ShramovVologodsky}
Let $\mathbf{F}$ be an arbitrary field. Let $X$ be a del Pezzo surface of degree~\mbox{$K_X^2=8$} such that  $X_{\overline{\mathbf{F}}} \simeq \mathbb{P}^1_{\overline{\mathbf{F}}} \times \mathbb{P}^1_{\overline{\mathbf{F}}}.$ Then the surface $X$ is isomorphic either to  $ C \times C',$ where $C$ and $C'$ are two conics over $\mathbf{F},$ or to~\mbox{$ \mathrm{R}_{\mathbf{L}/\mathbf{F}}(D),$} where \mbox{$\mathbf{F} \subset \mathbf{L}$} is a separable quadratic extension of $\mathbf{F}$ and $D$ is a conic over $\mathbf{L}.$

The surface $X$ is isomorphic to a quadric surface in $\mathbb{P}^3$ if and only if $X \simeq C \times C$ or to \mbox{$ \mathrm{R}_{\mathbf{L}/\mathbf{F}}(C_{\mathbf{L}}),$} for some conic $C$ over $\mathbf{F}.$  In this case one has
$$
\mathrm{Aut}(X) \simeq \mathrm{Aut}^{\circ}(X) \rtimes \mathbb{Z}/2\mathbb{Z},
$$

\noindent where $ \mathrm{Aut}^{\circ}(X) \subset  \mathrm{Aut}(X)$ is a subgroup that consists of all automorphism of $X$ which act trivially on $\mathrm{Pic}(X_{\mathbf{F}^{\text{sep}}}).$ More precisely, we have 
$$
 \mathrm{Aut}^{\circ}(C \times C) \simeq  \mathrm{Aut}(C) \times  \mathrm{Aut}(C)
$$
\noindent and
$$
 \mathrm{Aut}^{\circ}( \mathrm{R}_{\mathbf{L}/\mathbf{F}}(C_{\mathbf{L}})) \simeq \mathrm{Aut}(C_{\mathbf{L}}).
$$

\end{Lemma}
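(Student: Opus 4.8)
The plan is to read everything off the geometric Picard lattice together with Galois descent. Write $G=\mathrm{Gal}(\mathbf{F}^{\mathrm{sep}}/\mathbf{F})$ and $\mathrm{Pic}(X_{\mathbf{F}^{\mathrm{sep}}})\simeq \mathbb{Z}f_1\oplus\mathbb{Z}f_2$, where $f_1,f_2$ are the classes of the two rulings of $X_{\overline{\mathbf{F}}}\simeq\mathbb{P}^1\times\mathbb{P}^1$. First I would note that $G$ acts on this lattice preserving the intersection form and $K_X=-2f_1-2f_2$, hence must preserve the set $\{f_1,f_2\}$ of primitive isotropic effective classes; this gives a homomorphism $G\to\mathbb{Z}/2\mathbb{Z}$. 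If it is trivial, both rulings are $G$-invariant, the two projections descend to morphisms onto conics $C,C'$ over $\mathbf{F}$, and $X\to C\times C'$ is an isomorphism because it is one over $\overline{\mathbf{F}}$. If it is nontrivial, its kernel cuts out a separable quadratic extension $\mathbf{L}/\mathbf{F}$; over $\mathbf{L}$ the rulings are defined, so $X_{\mathbf{L}}\simeq D\times D^{\tau}$ with $\tau$ the nontrivial element of $\mathrm{Gal}(\mathbf{L}/\mathbf{F})$ interchanging the factors, which is precisely the statement $X\simeq \mathrm{R}_{\mathbf{L}/\mathbf{F}}(D)$. This proves the first assertion.

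Next I would reduce the quadric criterion to a descent question. The class $h=f_1+f_2$ is the unique effective class of self-intersection $2$, it is automatically $G$-invariant, and it is very ample with $h^0=4$ after any base change (both properties descend); hence any descent of $h$ to $\mathrm{Pic}(X)$ embeds $X$ as a degree-$2$ surface in $\mathbb{P}^3$, and conversely the hyperplane class of any quadric model is such an $h$. Thus $X$ is a quadric in $\mathbb{P}^3$ if and only if $h$ is represented by an $\mathbf{F}$-rational divisor.

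The cleanest test is to inspect a geometrically integral member $Y$ of $|h|$, which over $\overline{\mathbf{F}}$ is a smooth $(1,1)$-curve projecting isomorphically onto each ruling. In the split case an $\mathbf{F}$-rational $Y$ projects isomorphically over $\mathbf{F}$ onto both $C$ and $C'$, forcing $C\simeq C'$, hence $X\simeq C\times C$; conversely the diagonal $\Delta\subset C\times C$ is $\mathbf{F}$-rational of class $h$. In the Weil-restriction case an $\mathbf{F}$-rational $Y$ becomes over $\mathbf{L}$ the graph of an isomorphism $\phi\colon D\to D^{\tau}$, and $G$-invariance of $Y$ is exactly the descent cocycle relation $\phi^{\tau}\circ\phi=\mathrm{id}$, so effectivity of Galois descent yields a conic $C/\mathbf{F}$ with $C_{\mathbf{L}}\simeq D$; conversely, when $D=C_{\mathbf{L}}$ the diagonal of $C_{\mathbf{L}}\times C_{\mathbf{L}}$ is stable under the swap-and-conjugate action and descends. (Equivalently one can run the obstruction map $\mathrm{Pic}(X_{\mathbf{F}^{\mathrm{sep}}})^{G}\to\mathrm{Br}(\mathbf{F})$ of the Hochschild--Serre sequence, where $\delta(h)=\beta_C+\beta_{C'}$ in the split case and a corestriction of $\beta_D$ in the other; the divisor argument avoids identifying kernels of corestriction.) I expect the main obstacle to be precisely this ``only if'' direction in the Weil-restriction case: converting $G$-invariance of a $(1,1)$-divisor into an effective descent datum for $D$, together with guaranteeing that a geometrically integral $\mathbf{F}$-rational member of $|h|$ exists at all (a small Bertini-type point over finite fields).

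Finally, for the automorphism group, restriction to $\mathrm{Pic}(X_{\mathbf{F}^{\mathrm{sep}}})$ gives $\mathrm{Aut}(X)/\mathrm{Aut}^{\circ}(X)\hookrightarrow\mathbb{Z}/2\mathbb{Z}$, the nontrivial element being the ruling swap. In both quadric cases this swap is realized by an honest $\mathbf{F}$-involution: the factor interchange $(x,y)\mapsto(y,x)$ on $C\times C$, and its base change on $C_{\mathbf{L}}\times C_{\mathbf{L}}$, which commutes with the swap-and-conjugate Galois action and hence descends to $\mathrm{R}_{\mathbf{L}/\mathbf{F}}(C_{\mathbf{L}})$. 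This gives both surjectivity onto $\mathbb{Z}/2\mathbb{Z}$ and a splitting, so $\mathrm{Aut}(X)\simeq\mathrm{Aut}^{\circ}(X)\rtimes\mathbb{Z}/2\mathbb{Z}$. The identifications of $\mathrm{Aut}^{\circ}$ are then formal: a ruling-preserving automorphism of $C\times C$ is a pair of factor automorphisms, giving $\mathrm{Aut}(C)\times\mathrm{Aut}(C)$, while for the Weil restriction the ruling-preserving $\mathbf{F}$-automorphisms are exactly $\mathrm{R}_{\mathbf{L}/\mathbf{F}}(\mathrm{Aut}(C_{\mathbf{L}}))(\mathbf{F})=\mathrm{Aut}(C_{\mathbf{L}})$ by the universal property.
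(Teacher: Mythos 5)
The first thing to say is that the paper itself contains no proof of this lemma: it is recalled as a known result, with the reference cf.~\cite[Lemma~7.3]{ShramovVologodsky}, and the text immediately passes to its finite-field specialization (Lemma~\ref{lemma:propertyofrestrictionWeil}). So there is no in-paper argument to compare yours against; the comparison is with the cited literature. Judged on its own, your proposal is a correct reconstruction of the standard descent proof, and its skeleton is sound: Galois acts on the two rulings $\{f_1,f_2\}$ through $\mathbb{Z}/2\mathbb{Z}$, giving either a product of conics or a Weil restriction; $X$ is a quadric if and only if $h=f_1+f_2$ descends to $\mathrm{Pic}(X)$, which you test on a geometrically integral $\mathbf{F}$-rational member of $|h|$; and the ruling swap is defined over $\mathbf{F}$ exactly in the two quadric cases, which yields the split extension and the two descriptions of $\mathrm{Aut}^{\circ}$. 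Three points would need to be written out in a complete proof, all standard: (a) in the split case, descending the two projections is \emph{not} a consequence of Galois-invariance of the classes $f_i$ alone (for $X=C\times C$ with $C$ pointless, $f_i$ is Galois-invariant yet does not lie in $\mathrm{Pic}(X)$, since its Hochschild--Serre obstruction is $\beta_C\neq 0$); one needs the fact that the contraction of a Galois-invariant extremal ray is defined over $\mathbf{F}$, with target a form of $\mathbb{P}^1$, i.e.\ a conic. (b) The existence of a geometrically integral $\mathbf{F}$-rational member of $|h|\simeq\mathbb{P}^3$ needs an argument valid over an arbitrary field: the reducible members form a proper closed quadric surface, so density of rational points handles infinite fields, and over $\mathbb{F}_q$ one counts $q^3+q^2+q+1>(q+1)^2$; you correctly flag this as the delicate point. (c) In the Weil-restriction case, the step from ``$X_{\mathbf{L}}\simeq D\times D^{\tau}$ with Galois swapping the factors'' to ``$X\simeq \mathrm{R}_{\mathbf{L}/\mathbf{F}}(D)$'' hides a short verification that every ruling-swapping descent datum is equivalent to the canonical one (twist by an automorphism of the form $(\mathrm{id},\alpha^{\tau})$); likewise your identification of $G$-invariance of the graph with the cocycle identity $\phi^{\tau}\circ\phi=\mathrm{id}$, plus effectivity of descent for conics, is exactly right. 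None of these is a genuine gap; they are the routine verifications that the cited lemma packages, and your argument would serve as a self-contained substitute for the external reference.
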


However, as we are dealing with finite fields, by Chevalley--Warning theorem all conics are isomorphic to $\mathbb{P}^1$. So we modify Lemma \ref{lemma:ShramovVologodsky}   for the case when the field is finite.

\begin{Lemma}\label{lemma:propertyofrestrictionWeil}
Let $X$ be a del Pezzo surface of degree $8$ over a finite field $\mathbf{F}$ such that $X_{\overline{\mathbf{F}}} \simeq \mathbb{P}^1_{\overline{\mathbf{F}}} \times \mathbb{P}^1_{\overline{\mathbf{F}}}.$ Then $X$ is a quadric surface. 

 Moreover, if  the Picard number of $X$ is equal to $2,$ then $X \simeq \mathbb{P}^1 \times \mathbb{P}^1$ and 
$$
\mathrm{Aut}(X) \simeq \left( \mathrm{PGL}_2(\mathbf{F}) \times \mathrm{PGL}_2(\mathbf{F})\right) \rtimes \mathbb{Z}/2\mathbb{Z},
$$
where $\mathbb{Z}/2\mathbb{Z}$ interchanges the elements in the pairs $(a,b) \in   \mathrm{PGL}_2(\mathbf{F}) \times \mathrm{PGL}_2(\mathbf{F}).$

 Otherwise, if the Picard number of $X$ is equal to $1,$ then $X$ is isomorphic to the Weil restriction of scalars~\mbox{$\mathrm{R}_{\mathbf{L}/\mathbf{F}}(\mathbb{P}^1),$} where $\mathbf{F} \subset \mathbf{L}$ is a quadratic extension of $\mathbf{F}.$ The automorphism group of $X$ is 
\begin{equation}\label{eq2:autofQ}
\mathrm{Aut}(X)=\mathrm{PGL}_2(\mathbf{L}) \rtimes \mathbb{Z}/2\mathbb{Z},
\end{equation}

\noindent where $\mathbb{Z}/2\mathbb{Z}$ acts on $\mathrm{PGL}_2(\mathbf{L})$ as the Galois group $\mathrm{Gal}\left( \mathbf{L}/\mathbf{F} \right) \simeq \mathbb{Z}/2\mathbb{Z}.$

\end{Lemma}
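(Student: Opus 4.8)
The plan is to prove Lemma~\ref{lemma:propertyofrestrictionWeil} by specializing the general structural result of Lemma~\ref{lemma:ShramovVologodsky} to the case of a finite base field, where the Chevalley--Warning theorem forces every conic to have a rational point and hence to be isomorphic to~$\mathbb{P}^1$.

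First I would invoke Lemma~\ref{lemma:ShramovVologodsky} to reduce to the dichotomy it provides. Since $X$ is a del Pezzo surface of degree $8$ with $X_{\overline{\mathbf{F}}} \simeq \mathbb{P}^1_{\overline{\mathbf{F}}} \times \mathbb{P}^1_{\overline{\mathbf{F}}}$, that lemma says $X$ is isomorphic either to $C \times C'$ for conics $C, C'$ over $\mathbf{F}$, or to a Weil restriction $\mathrm{R}_{\mathbf{L}/\mathbf{F}}(D)$ for a separable quadratic extension $\mathbf{F} \subset \mathbf{L}$ and a conic $D$ over $\mathbf{L}$. Because $\mathbf{F}$ is finite, the Chevalley--Warning theorem guarantees that a conic (a smooth plane quadric, i.e.\ a degree-$2$ form in $3$ variables) always has an $\mathbf{F}$-rational point, so every conic over $\mathbf{F}$ — and likewise over the finite field $\mathbf{L}$ — is isomorphic to $\mathbb{P}^1$. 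This collapses both cases: in the first, $X \simeq \mathbb{P}^1 \times \mathbb{P}^1$; in the second, $D \simeq \mathbb{P}^1_{\mathbf{L}}$ so $X \simeq \mathrm{R}_{\mathbf{L}/\mathbf{F}}(\mathbb{P}^1)$. In either case the second half of Lemma~\ref{lemma:ShramovVologodsky} confirms $X$ is a quadric surface, giving the first claim.

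Next I would separate the two cases by the Picard number. The key geometric fact is that $X \simeq \mathbb{P}^1 \times \mathbb{P}^1$ has Picard number $2$ over $\mathbf{F}$ (the two rulings are each defined over $\mathbf{F}$, and the Galois action on $\mathrm{Pic}(X_{\overline{\mathbf{F}}}) \simeq \mathbb{Z}^2$ is trivial), whereas the Weil restriction $\mathrm{R}_{\mathbf{L}/\mathbf{F}}(\mathbb{P}^1)$ has Picard number $1$, since the nontrivial element of $\mathrm{Gal}(\mathbf{L}/\mathbf{F})$ swaps the two rulings and only the diagonal sublattice is Galois-invariant. Thus Picard number $2$ selects the split case and Picard number $1$ selects the Weil-restriction case, matching the statement. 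For the automorphism groups, I would again transcribe from Lemma~\ref{lemma:ShramovVologodsky}: in the split case $\mathrm{Aut}^{\circ}(\mathbb{P}^1 \times \mathbb{P}^1) \simeq \mathrm{Aut}(\mathbb{P}^1) \times \mathrm{Aut}(\mathbb{P}^1) = \mathrm{PGL}_2(\mathbf{F}) \times \mathrm{PGL}_2(\mathbf{F})$, extended by the $\mathbb{Z}/2\mathbb{Z}$ that swaps the two factors (interchanging the rulings); in the non-split case $\mathrm{Aut}^{\circ}(\mathrm{R}_{\mathbf{L}/\mathbf{F}}(\mathbb{P}^1)) \simeq \mathrm{Aut}(\mathbb{P}^1_{\mathbf{L}}) = \mathrm{PGL}_2(\mathbf{L})$, extended by $\mathrm{Gal}(\mathbf{L}/\mathbf{F}) \simeq \mathbb{Z}/2\mathbb{Z}$ acting through the Galois action on $\mathbf{L}$.

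\textbf{The main obstacle} I anticipate is not any single hard computation but rather pinning down, cleanly and correctly, the identifications behind the semidirect-product descriptions — in particular verifying that the $\mathbb{Z}/2\mathbb{Z}$ in the split case is exactly the ruling-swap and that in the Weil-restriction case the extension is by the \emph{Galois} action on $\mathrm{PGL}_2(\mathbf{L})$ rather than some other involution. These follow from the structure of $\mathrm{Aut}(X) \simeq \mathrm{Aut}^{\circ}(X) \rtimes \mathbb{Z}/2\mathbb{Z}$ already supplied by Lemma~\ref{lemma:ShramovVologodsky}, together with the observation that the quotient $\mathbb{Z}/2\mathbb{Z}$ acts on $\mathrm{Pic}(X_{\mathbf{F}^{\mathrm{sep}}})$ by swapping the ruling classes. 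I would make these identifications explicit by tracking the action on the two classes of lines, which is routine once the Chevalley--Warning reduction has trivialized the conics.
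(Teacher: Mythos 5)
Your proposal is correct and follows exactly the route the paper intends: the paper states this lemma without a separate proof, presenting it as a direct modification of Lemma~\ref{lemma:ShramovVologodsky} in which the Chevalley--Warning theorem trivializes all conics over the finite fields $\mathbf{F}$ and $\mathbf{L}$, and your Picard-number dichotomy (trivial Galois action on the two rulings in the split case, ruling-swap in the Weil-restriction case) together with the transcription of $\mathrm{Aut}^{\circ}(X) \rtimes \mathbb{Z}/2\mathbb{Z}$ supplies precisely the verifications that the paper leaves implicit.
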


\begin{Remark}\label{remark:descendautomorphism}
Let $Q$ be the Weil restriction of scalars~\mbox{$\mathrm{R}_{\mathbf{L}/\mathbf{F}}(\mathbb{P}^1),$} where~\mbox{$\mathbf{F} \subset \mathbf{L}$} is a separable quadratic extension of $\mathbf{F}.$ Let $\tau$ be a generator of the Galois group of this extension. Let 
$$
\phi=(\chi,\upsilon) \in \mathrm{PGL}_2(\mathbf{L}) \rtimes \mathbb{Z}/2\mathbb{Z}
$$
\noindent  be an automorphism of~$Q.$ Then this automorphism extends to the automorphism 
$$
\phi=(\chi, \tau(\chi),\upsilon) \in \left(\mathrm{PGL}_2(\mathbf{L}) \times \mathrm{PGL}_2(\mathbf{L})\right) \rtimes \mathbb{Z}/2\mathbb{Z}
$$
\noindent of $Q_{\mathbf{L}}=\mathbb{P}^1_{\mathbf{L}} \times \tau_*\mathbb{P}^1_{\mathbf{L}},$ where $\tau_*\mathbb{P}^1_{\mathbf{L}} \simeq \mathbb{P}^1$ is Galois conjugate to $\mathbb{P}^1_{\mathbf{L}}.$  This immediately follows from the equivalence of the category of the projective varieties over $\mathbf{F}$ and the category of the projective varieties over $\mathbf{L}$ with descent data.

\end{Remark}

We need the following definition.

\begin{definition}\label{def:generalposition}
We say that $5$ points $P_1,\ldots, P_5 \in \mathbb{P}^1 \times \mathbb{P}^1$ are \textit{in general position} if no $2$ points of this set  lie on a divisor of bidegree $(1,0)$ and~$(0,1)$ and no $4$ points of this set lie on a divisor of bidegree $(1,1).$ If $Q \subset \mathbb{P}^3_{\mathbf{F}}$ is a quadric surface over a field $\mathbf{F},$ then we say that $5$ points $P_1,\ldots, P_5 \in Q$ are \textit{in general position} if they are in general position on
$$
Q_{\overline{\mathbf{F}}} \simeq \mathbb{P}^1_{\overline{\mathbf{F}}} \times \mathbb{P}^1_{\overline{\mathbf{F}}}.
$$

\end{definition}

Now let us study  the sets of $5$ points in general position on a quadric surface over~$\mathbb{F}_{4^k}.$

\begin{Prop}\label{prop:4^kwithfifth}
 Let $P_1, \ldots, P_5$ be a set of  points in general position on~\mbox{$\mathbb{P}^1 \times \mathbb{P}^1$} over the field $\mathbb{F}_{4^{2k}}$ such that they form an orbit of some subgroup 
$$
\mathbb{Z}/5\mathbb{Z} \subset \mathrm{Aut}(\mathbb{P}^1 \times \mathbb{P}^1).
$$
\noindent Then the set of points $P_1, \ldots, P_5$ is unique up to automorphisms of $\mathbb{P}^1 \times \mathbb{P}^1.$
\end{Prop}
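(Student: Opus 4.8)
The plan is to exploit the explicit description of subgroups of order $5$ in the automorphism group of $\mathbb{P}^1 \times \mathbb{P}^1$ over $\mathbb{F}_{4^{2k}}$ and then analyze the single free orbit that can lie in general position. First I would observe that since $\mathbb{F}_{4^{2k}} = \mathbb{F}_{4^k \cdot 2}$ and $4^{2k} = 16^k$ is a power of $4$ with even exponent $2k$ in base $4$, the field $\mathbb{F}_{4^{2k}}$ contains a non-trivial fifth root of unity $\xi$ (by Remark~\ref{remark:aboutfieldschar2}, since $4^{2k}=2^{4k}$ and $4k$ is divisible by $4$). By Lemma~\ref{lemma:propertyofrestrictionWeil} the Picard number is $2$ case gives $\mathrm{Aut}(\mathbb{P}^1 \times \mathbb{P}^1) \simeq (\mathrm{PGL}_2(\mathbf{F}) \times \mathrm{PGL}_2(\mathbf{F})) \rtimes \mathbb{Z}/2\mathbb{Z}$. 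Any subgroup $\mathbb{Z}/5\mathbb{Z}$ must land in the index-two subgroup $\mathrm{PGL}_2(\mathbf{F}) \times \mathrm{PGL}_2(\mathbf{F})$ (the swap involution has order $2$, and $5$ is odd, so a generator of order $5$ cannot project nontrivially to $\mathbb{Z}/2\mathbb{Z}$). Thus the generator is a pair $(a,b)$ with $a^5 = b^5 = e$ in $\mathrm{PGL}_2(\mathbf{F})$.

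Next I would pin down the possible pairs $(a,b)$ up to conjugacy. By Corollary~\ref{cor:power5inpgl}, since $\xi \in \mathbf{F}$, each of $a$ and $b$ is either trivial or conjugate to the image of $\bigl(\begin{smallmatrix}1 & 0\\ 0 & \xi\end{smallmatrix}\bigr)$; more precisely each nontrivial factor is conjugate to a diagonal matrix with eigenvalue ratio a nontrivial fifth root of unity. For the orbit of $P_1,\dots,P_5$ to have size exactly $5$ (rather than $1$), the cyclic group must act freely, which forces at least one of $a,b$ to be nontrivial; and the general-position hypothesis will further constrain which pairs survive. Here I would use the factorwise action: if say $b = e$, then all five points share the same second coordinate, so two of them lie on a common divisor of bidegree $(1,0)$, contradicting general position. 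Hence both $a$ and $b$ are nontrivial of order $5$. Up to simultaneous conjugation by elements of each $\mathrm{PGL}_2(\mathbf{F})$ factor, I may assume $a$ is diagonal with ratio $\xi$, and $b$ is diagonal with ratio $\xi^s$ for some $s \in \{1,2,3,4\}$; since $(a,b)$ and $(a,b^{r})$ generate the same cyclic group for $r$ coprime to $5$, replacing the generator by a power lets me normalize $a$ to have ratio $\xi$, and then $b$ has ratio $\xi^s$ with $s$ determined up to this reparametrization.

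The heart of the argument is then to determine, for the diagonal action of $(a,b)$ on $\mathbb{P}^1 \times \mathbb{P}^1$, which orbits of size $5$ lie in general position, and to show there is essentially one such orbit up to $\mathrm{Aut}(\mathbb{P}^1 \times \mathbb{P}^1)$. Writing the first factor in coordinates where $a$ fixes $0$ and $\infty$ and acts on the affine chart by multiplication by $\xi$, and similarly for $b$, a free orbit of a point $([u_0:u_1],[v_0:v_1])$ with $u_0 u_1 \neq 0$ and $v_0 v_1 \neq 0$ consists of the five points obtained by scaling. I would then translate the three general-position conditions of Definition~\ref{def:generalposition} into concrete non-degeneracy conditions on the orbit: no two points share a first or second coordinate (automatic from freeness once both factors act with full order $5$), and no four points lie on a curve of bidegree $(1,1)$. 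The $(1,1)$ condition is the delicate one, and I expect it to be the main obstacle: I would set up the $4 \times 6$ (or $3 \times 6$) matrix whose rows are the monomials $(u v, u, v, 1)$-type evaluations at the orbit points and argue, using the multiplicativity of the $\xi$-powers, that the relevant $4\times 4$ minors are Vandermonde-like and hence nonvanishing precisely for specific residues $s$. This should cut down the admissible value of $s$ (up to the reparametrization symmetry and the swap $\mathbb{Z}/2\mathbb{Z}$) to a single class, and since any two points in the open chart are interchanged by the torus $\mathrm{Aut}^{\circ}$ acting diagonally, the starting point can be normalized as well. Combining the normalization of $(a,b)$ and of the base point yields uniqueness of the orbit up to automorphisms of $\mathbb{P}^1 \times \mathbb{P}^1$, completing the proof.
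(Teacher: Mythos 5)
Your proposal is correct and follows essentially the same route as the paper: reduce the $\mathbb{Z}/5\mathbb{Z}$ to a pair of diagonal matrices $\bigl(\begin{smallmatrix}1&0\\0&\xi\end{smallmatrix}\bigr),\bigl(\begin{smallmatrix}1&0\\0&\xi^{s}\end{smallmatrix}\bigr)$ using Corollary~\ref{cor:power5inpgl} (general position forcing nontrivial projection to both factors), normalize the base point to $([1:1],[1:1])$ via the commuting torus, and use the factor swap to identify $s=2$ with $s=3$. The only divergence is at the last step, where your Vandermonde-minor computation (which does work: for $s=2$ the rows $(1,\xi^{i},\xi^{2i},\xi^{3i})$ form a genuine Vandermonde matrix with distinct nodes, while for $s=1$ and $s=4$ two columns coincide since $\xi^{5i}=1$) replaces the paper's slicker observation that the $s=4$ orbit is the image under $\left\langle \mathrm{id},\bigl(\begin{smallmatrix}0&1\\1&0\end{smallmatrix}\bigr)\right\rangle$ of the diagonal orbit, which visibly lies on a divisor of bidegree $(1,1)$.
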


\begin{proof}

By Remark~\ref{remark:aboutfieldschar2} the field  $\mathbb{F}_{4^{2k}}$  contains a non-trivial fifth root of unity.  We need to define the embedding 
$$
\eta: \mathbb{Z}/5\mathbb{Z} \hookrightarrow \mathrm{Aut}(\mathbb{P}^1 \times \mathbb{P}^1) \simeq \left(\mathrm{PGL}_2(\mathbb{F}_{4^{2k}}) \times \mathrm{PGL}_2(\mathbb{F}_{4^{2k}}) \right) \rtimes \mathbb{Z}/2\mathbb{Z}.
$$
\noindent  By Corollary~\ref{cor:power5inpgl}  the projection of the image of the embedding on any copy of the group~$\mathrm{PGL}_2(\mathbb{F}_{4^{2k}})$ is conjugate to the element $\bigl(\begin{smallmatrix}1 & 0\\ 0 & \xi\end{smallmatrix}\bigr)$ for a non-trivial fifth root of unity~$\xi \in \mathbb{F}_{4^{2k}}.$

By the condition of general position of  $P_1, \ldots, P_5$ (i.e. no $2$ points lie on a divisor of bidegree $(1,0)$ and $(0,1)$) we get that $\mathbb{Z}/5\mathbb{Z}$  projects non-trivially in both copies of $\mathrm{PGL}_2(\mathbb{F}_{4^{2k}}).$  In other words, up to conjugation we have 
$$
\eta(\sigma)=
\left\langle\begin{pmatrix}
1 & 0\\
0 & \xi_1
\end{pmatrix}, 
\begin{pmatrix}
1 & 0\\
0 & \xi_2
\end{pmatrix}, \mathrm{id}
 \right\rangle,
$$
\noindent where $\sigma$ is a generator of $\mathbb{Z}/5\mathbb{Z}$ and $\xi_1$ and $\xi_2$ are non-trivial fifth root of unity.

 Without loss of generality we can assume that $P_1$ is $([1:1],[1:1]) \in \mathbb{P}^1 \times \mathbb{P}^1.$ Indeed, any point of the form 
\begin{equation}\label{eq:abcd}
 ([a:b],[c:d]) \in \mathbb{P}^1 \times \mathbb{P}^1,
\end{equation}
 \noindent where $a$, $b$, $c$ and $d$ are non-zero, can be obtained from the point  $([1:1],[1:1])$ by the action of the automorphism
$$
\left\langle\begin{pmatrix}
a & 0\\
0 & b
\end{pmatrix}, 
\begin{pmatrix}
c & 0\\
0 & d
\end{pmatrix}, \mathrm{id}
 \right\rangle.
$$
\noindent This automorphism commutes with $\eta(\sigma).$ If one of $a,$ $b,$ $c,$ $d$ is zero, then the orbit of the point \eqref{eq:abcd} is not in general position. By general position of the points~\mbox{$P_1,$ $P_2,$ $P_3,$ $P_4,$ $P_5$} (i.e. no $4$ points lie on a divisor of bidegree $(1,1)$) we get that $\xi_1 \neq \xi_2.$ Let us fix some non-trivial fifth root of unity $\xi.$ So up to conjugation we have the following embeddings:
\begin{gather*}
\eta_2(\sigma)=
\left\langle\begin{pmatrix}
1 & 0\\
0 & \xi
\end{pmatrix}, 
\begin{pmatrix}
1 & 0\\
0 & \xi^2
\end{pmatrix}, \mathrm{id}
\right\rangle,\\
\eta_3(\sigma)=
\left\langle\begin{pmatrix}
1 & 0\\
0 & \xi
\end{pmatrix}, 
\begin{pmatrix}
1 & 0\\
0 & \xi^3
\end{pmatrix}, \mathrm{id}
 \right\rangle,\\
\eta_4(\sigma)=
\left\langle\begin{pmatrix}
1 & 0\\
0 & \xi
\end{pmatrix}, 
\begin{pmatrix}
1 & 0\\
0 & \xi^4
\end{pmatrix}, \mathrm{id}
 \right\rangle.
\end{gather*}

The orbit of the point $P_1$  under the action of $\eta_2(\mathbb{Z}/5\mathbb{Z})$ is 
\begin{equation}\label{eta2}
([1:1],[1:1]); \; ([1:\xi],[1:\xi^2]); \; ([1:\xi^2],[1:\xi^4]); \; ([1:\xi^3],[1:\xi]); \; ([1:\xi^4],[1:\xi^3]).
\end{equation}

\noindent The orbit of the point $P_1$  under the action of $\eta_3(\mathbb{Z}/5\mathbb{Z})$ is 
\begin{equation}\label{eta3}
([1:1],[1:1]); \; ([1:\xi],[1:\xi^3]); \; ([1:\xi^2],[1:\xi]); \; ([1:\xi^3],[1:\xi^4]); \; ([1:\xi^4],[1:\xi^2]).
\end{equation}

\noindent The orbit of the point $P_1$  under the  action of $\eta_4(\mathbb{Z}/5\mathbb{Z})$ is 
\begin{equation}\label{eta4}
([1:1],[1:1]); \; ([1:\xi],[1:\xi^4]); \; ([1:\xi^2],[1:\xi^3]); \; ([1:\xi^3],[1:\xi^2]); \; ([1:\xi^4],[1:\xi]).
\end{equation}

\noindent One can see that the orbit \eqref{eta3} can be obtained by the action of $\mathbb{Z}/2\mathbb{Z}$ on \eqref{eta2} by interchanging two copies of $\mathbb{P}^1.$ Also note that the orbit \eqref{eta4} can be obtained from~$5$ points
\begin{equation}\label{eq:prohibitedorbit}
([1:1],[1:1]); \; ([1:\xi],[1:\xi]); \; ([1:\xi^2],[1:\xi^2]); \; ([1:\xi^3],[1:\xi^3]); \; ([1:\xi^4],[1:\xi^4])
\end{equation}
\noindent by acting with the element
$$
\left\langle\begin{pmatrix}
1 & 0\\
0 & 1
\end{pmatrix}, 
\begin{pmatrix}
0 & 1\\
1 & 0
\end{pmatrix},  \mathrm{id}
 \right\rangle \in  \left(\mathrm{PGL}_2(\mathbb{F}_{4^{2k}}) \times \mathrm{PGL}_2(\mathbb{F}_{4^{2k}}) \right) \rtimes \mathbb{Z}/2\mathbb{Z}.
$$

\noindent The orbit \eqref{eq:prohibitedorbit} is prohibited, as all points in the orbit lie on a divisor of bidegree~$(1,1).$ Thus the orbit \eqref{eta4} is also prohibited. Therefore, we get the only allowed orbit in this case.

\end{proof}

\begin{Prop}\label{prop:4^kwithoutfifth}
 Let $P_1, \ldots, P_5$ be a set of different points in general position on~$\mathbb{P}^1 \times \mathbb{P}^1$ over the field $\mathbb{F}_{4^{2k+1}}$ such that they form an orbit of some subgroup 
$$
\mathbb{Z}/5\mathbb{Z} \subset \mathrm{Aut}(\mathbb{P}^1 \times \mathbb{P}^1).
$$
\noindent  Then the set of points $P_1, \ldots, P_5$ is unique up to automorphisms of $\mathbb{P}^1 \times \mathbb{P}^1.$
\end{Prop}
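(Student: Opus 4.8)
The plan is to run the same orbit analysis as in Proposition~\ref{prop:4^kwithfifth}, but with Lemma~\ref{lemma:repwithoutxi4^k} in place of Corollary~\ref{cor:power5inpgl} to pin down the order-$5$ generator, and with a non-split torus replacing the split-torus normalization of the base point. Write $q=4^{2k+1}$. By Remark~\ref{remark:aboutfieldschar2} the field $\mathbb{F}_q$ contains no non-trivial fifth root of unity, so Lemma~\ref{lemma:repwithoutxi4^k} applies; on the other hand $q+1\equiv 0 \mod 5$, so order-$5$ subgroups of $\mathrm{PGL}_2(\mathbb{F}_q)$ do exist. First I would fix the shape of the embedding $\eta\colon \mathbb{Z}/5\mathbb{Z} \hookrightarrow \mathrm{Aut}(\mathbb{P}^1\times\mathbb{P}^1)\simeq (\mathrm{PGL}_2(\mathbb{F}_q)\times \mathrm{PGL}_2(\mathbb{F}_q))\rtimes \mathbb{Z}/2\mathbb{Z}$ provided by Lemma~\ref{lemma:propertyofrestrictionWeil}. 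Since $5$ is odd, the composition of $\eta$ with the projection to $\mathbb{Z}/2\mathbb{Z}$ is trivial, so $\eta(\sigma)=(g_1,g_2)$ with $g_i^5=e$; and the general position hypothesis (no two points on a divisor of bidegree $(1,0)$ or $(0,1)$, Definition~\ref{def:generalposition}) forces both $g_1$ and $g_2$ to have order exactly $5$, since a trivial factor would confine the whole orbit to a single fibre.

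Next, by Lemma~\ref{lemma:repwithoutxi4^k} the order-$5$ subgroup of $\mathrm{PGL}_2(\mathbb{F}_q)$ is unique up to conjugacy and generated by the matrix~\eqref{eq2:representationwithout5root}, which I denote $A$. Conjugating the two factors independently and relabelling the generator $\sigma$, I may therefore assume $\eta(\sigma)=(A,A^j)$ for some $j\in\{1,2,3,4\}$. The key new point is the normalization of the base point $P_1=(x_0,y_0)$ over the non-closed field $\mathbb{F}_q$. The eigenvalues of $A$ are $\xi,\xi^4\notin\mathbb{F}_q$, so $A$ is a regular semisimple element with non-split centralizer: $T=C_{\mathrm{PGL}_2(\mathbb{F}_q)}(A)$ is the non-split maximal torus of order $q+1=|\mathbb{P}^1(\mathbb{F}_q)|$. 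Since every non-trivial element of $T$ has the same (non-rational) fixed points as $A$, the torus $T$ acts freely, hence simply transitively, on $\mathbb{P}^1(\mathbb{F}_q)$. As $C(A)=C(A^j)=T$, the subgroup $T\times T$ commutes with $\eta(\sigma)$ and acts simply transitively on each factor, so I can move $P_1$ to a fixed rational base point $(p_0,p_0)$ without disturbing $\eta(\sigma)$. (Here I use that $P_1$ is a rational point; in the intended application the five blown-down lines are individually defined over $\mathbb{F}_q$, so the $P_i$ are $\mathbb{F}_q$-points, and the $\langle A\rangle$-orbit of $x_0$ is one of the $(q+1)/5$ orbits partitioning $\mathbb{P}^1(\mathbb{F}_q)$.) After this normalization the orbit is completely determined by $j$.

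Finally I would determine the admissible $j$ and show they give a single configuration. Over $\overline{\mathbb{F}}_2$ the matrix $A$ diagonalizes to $\mathrm{diag}(1,\zeta)$ with $\zeta=\xi^3$ a primitive fifth root of unity, so $(A,A^j)$ becomes $(\mathrm{diag}(1,\zeta),\mathrm{diag}(1,\zeta^j))$, which is exactly the embedding $\eta_j$ of Proposition~\ref{prop:4^kwithfifth} (with $\xi$ there replaced by $\zeta$). Since general position is checked over $\overline{\mathbb{F}}_2$ by Definition~\ref{def:generalposition}, the admissibility of each $j$ is identical to the situation there: $j=1$ gives the forbidden orbit~\eqref{eq:prohibitedorbit} whose points all lie on a $(1,1)$-divisor, $j=4$ gives the forbidden orbit~\eqref{eta4}, while $j=2$ and $j=3$ yield the allowed orbits~\eqref{eta2} and~\eqref{eta3}, which are interchanged by the factor-swap $\mathbb{Z}/2\mathbb{Z}\subset\mathrm{Aut}(\mathbb{P}^1\times\mathbb{P}^1)$. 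Indeed this swap conjugates $(A,A^j)$ to $(A^j,A)$, that is, it sends $j\mapsto j^{-1}\mod 5$, identifying $2\leftrightarrow 3$. Hence exactly one orbit survives up to automorphism, which proves uniqueness.

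I expect the main obstacle to be the base-point normalization in the second paragraph: over $\mathbb{F}_{4^{2k+1}}$ the generator $A$ is not diagonalizable, so the convenient split-torus argument of Proposition~\ref{prop:4^kwithfifth} is unavailable, and one must instead identify $C(A)$ with the non-split torus of order $q+1$ and verify that it acts simply transitively on $\mathbb{P}^1(\mathbb{F}_q)$. Once this rational-geometry input is in place, the general-position bookkeeping reduces to Proposition~\ref{prop:4^kwithfifth} via diagonalization over $\overline{\mathbb{F}}_2$ and is routine.
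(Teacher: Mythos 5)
Your proposal is correct and takes essentially the same approach as the paper's proof: Lemma~\ref{lemma:repwithoutxi4^k} fixes the order-$5$ generator in each $\mathrm{PGL}_2$ factor up to conjugacy, general position excludes the cases where the two projections are trivial or conjugate (your $j\equiv\pm 1 \bmod 5$), and the factor swap identifies the two surviving embeddings (the paper's $\eta$ and $\widetilde{\eta}$, your $j=2$ and $j=3$). Your one genuine addition is a point of rigor: the paper normalizes $P_1=([1:1],[1:1])$ ``as in Proposition~\ref{prop:4^kwithfifth}'', although the split-torus computation there does not literally apply to the companion matrix~\eqref{eq2:representationwithout5root}, and your observation that the non-split centralizer torus of order $4^{2k+1}+1$ acts simply transitively on the rational points of $\mathbb{P}^1$ is exactly the missing justification.
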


\begin{proof}
By Remark~\ref{remark:aboutfieldschar2} the field $\mathbb{F}_{4^{2k+1}}$ does not contain non-trivial fifth root of unity.    Let $\sigma$ be a generator of $\mathbb{Z}/5\mathbb{Z}.$ By Lemma~\ref{lemma:repwithoutxi4^k}  the projection of the image of~$\sigma$ under the embedding in $\mathrm{Aut}(\mathbb{P}^1 \times \mathbb{P}^1)$  on any copy of~$\mathrm{PGL}_2(\mathbb{F}_{4^{2k+1}})$ is conjugate to the matrix \eqref{eq2:representationwithout5root}. By the condition of general position of  $P_1, \ldots, P_5$ (i.e. no $2$ points lie on a divisor of bidegree $(1,0)$ and~$(0,1)$) we get that the image of $\sigma$  projects non-trivially in both copies of $\mathrm{PGL}_2(\mathbb{F}_{4^{2k+1}}).$

As in the proof of Proposition \ref{prop:4^kwithfifth}   we can assume that 
$$
P_1=([1:1],[1:1]) \in \mathbb{P}^1 \times \mathbb{P}^1.
$$
\noindent By general position of $P_1,$ $P_2,$ $P_3,$ $P_4,$ $P_5$ (i.e. no $4$ points lie on a divisor of bidegree~$(1,1)$) the  projections in both copies of $\mathrm{PGL}_2(\mathbb{F}_{4^{2k+1}})$ are different. So up to conjugation the image of $\mathbb{Z}/5\mathbb{Z}$ is generated by the following elements  in the group~$\mathrm{Aut}(\mathbb{P}^1 \times \mathbb{P}^1):$
\begin{gather*}
\eta(\sigma)=
\left\langle\begin{pmatrix}
\xi+\xi^4 & 1\\
1 & 0
\end{pmatrix}, 
\begin{pmatrix}
\xi^2+\xi^3 & 1\\
1 & 0
\end{pmatrix}, \mathrm{id}
 \right\rangle, \\
\widetilde{\eta}(\sigma)=
\left\langle\begin{pmatrix}
\xi^2+\xi^3 & 1\\
1 & 0
\end{pmatrix}, 
\begin{pmatrix}
\xi+\xi^4 & 1\\
1 & 0
\end{pmatrix}, \mathrm{id}
 \right\rangle,
\end{gather*}

\noindent where $\xi \in \mathbb{F}_{16^{2k+1}}$ is a non-trivial fifth root of unity. 

 The orbit of $P_1$ under the action of $\eta(\sigma)$ can be obtained from the orbit of $P_1$ under the action of $\widetilde{\eta}(\sigma)$ by the action of $\mathbb{Z}/2\mathbb{Z}$ which interchanges two copies of~$\mathbb{P}^1.$  So we get that there is only one allowed orbit  up to automorphisms of $\mathbb{P}^1 \times \mathbb{P}^1$ in this case.

\end{proof}

Now we study the Weil restriction of scalars of $\mathbb{P}^1.$

\begin{Prop}\label{prop:24^k}
Let  $Q$ be a quadric surface over the field $\mathbb{F}_{2^{2k+1}}$ which is isomorphic to the Weil restriction of scalars $\mathrm{R}_{\mathbb{F}_{4^{2k+1}}/\mathbb{F}_{2^{2k+1}}}(\mathbb{P}^1).$  Let $P_1, \ldots, P_5$ be a set of different points in general position on~$Q$ over the field $\mathbb{F}_{2^{2k+1}}$  such that they form an orbit of some subgroup~\mbox{$\mathbb{Z}/5\mathbb{Z} \subset \mathrm{Aut}(Q).$}  Then the set of points $P_1, \ldots, P_5$ is unique up to automorphisms of $Q.$
\end{Prop}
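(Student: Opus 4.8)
The plan is to transfer the problem to the projective line over $\mathbf{L}=\mathbb{F}_{4^{2k+1}}$ and then to exploit the maximal torus centralising the order-$5$ element. First I record the ambient structure. By Lemma~\ref{lemma:propertyofrestrictionWeil} we have $\mathrm{Aut}(Q)\simeq\mathrm{PGL}_2(\mathbf{L})\rtimes\mathbb{Z}/2\mathbb{Z}$, where the factor $\mathbb{Z}/2\mathbb{Z}$ acts through the generator $\tau$ of $\mathrm{Gal}(\mathbf{L}/\mathbf{F})$; and by the defining property of the Weil restriction $Q(\mathbf{F})=\mathbb{P}^1(\mathbf{L})$, a point $p\in\mathbb{P}^1(\mathbf{L})$ corresponding over $\mathbf{L}$ to the pair $(p,\tau(p))$ on $Q_{\mathbf{L}}=\mathbb{P}^1_{\mathbf{L}}\times\tau_*\mathbb{P}^1_{\mathbf{L}}$ (Remark~\ref{remark:descendautomorphism}). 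Any element of $\mathrm{PGL}_2(\mathbf{L})\rtimes\mathbb{Z}/2\mathbb{Z}$ with non-trivial image in $\mathbb{Z}/2\mathbb{Z}$ keeps that non-trivial image in every odd power, so its order is even; hence every element of order $5$, and thus the whole subgroup $\mathbb{Z}/5\mathbb{Z}$, lies in $\mathrm{PGL}_2(\mathbf{L})$. By Remark~\ref{remark:aboutfieldschar2} the field $\mathbf{L}=\mathbb{F}_{4^{2k+1}}$ contains no non-trivial fifth root of unity, so Lemma~\ref{lemma:repwithoutxi4^k} applies: after conjugating by a suitable element of $\mathrm{PGL}_2(\mathbf{L})\subset\mathrm{Aut}(Q)$ I may assume that the generator $\sigma$ of $\mathbb{Z}/5\mathbb{Z}$ is the class $\chi$ of the matrix~\eqref{eq2:representationwithout5root}. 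Under the identification above the induced action on $Q_{\mathbf{L}}$ is $(\chi,\tau(\chi))$, and by Lemma~\ref{lemma:xi+xi^4}\ref{lemma:xi+xi^4galois} the second entry is $\bigl(\begin{smallmatrix}\xi^2+\xi^3 & 1\\ 1 & 0\end{smallmatrix}\bigr)$, so this is exactly the embedding $\eta(\sigma)$ of Proposition~\ref{prop:4^kwithoutfifth}.

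The key geometric input is the behaviour of the centraliser of $\chi$. The characteristic polynomial of~\eqref{eq2:representationwithout5root} is $x^2+(\xi+\xi^4)x+1$, whose roots are the primitive fifth roots $\xi,\xi^4\in\mathbb{F}_{16}$; since $\mathbb{F}_{16}\not\subset\mathbf{L}$, the matrix $\chi$ is regular semisimple with eigenlines not defined over $\mathbf{L}$. Consequently $\langle\chi\rangle$ acts freely on $\mathbb{P}^1(\mathbf{L})$, and the centraliser $T=C_{\mathrm{PGL}_2(\mathbf{L})}(\chi)$ is the non-split maximal torus through $\chi$, of order $q+1=4^{2k+1}+1$, where $q=4^{2k+1}$. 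This torus has no fixed point on $\mathbb{P}^1(\mathbf{L})$ (a fixed point would be an eigenline), so it acts on the set $\mathbb{P}^1(\mathbf{L})$, which also has cardinality $q+1$, freely and hence simply transitively. Each $t\in T$ commutes with $\sigma$ and, regarded as the automorphism $(t,\mathrm{id})\in\mathrm{PGL}_2(\mathbf{L})\rtimes\mathbb{Z}/2\mathbb{Z}$, is a genuine $\mathbf{F}$-automorphism of $Q$ acting on $Q(\mathbf{F})=\mathbb{P}^1(\mathbf{L})$ by the standard $\mathrm{PGL}_2(\mathbf{L})$-action.

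With these preparations the conclusion is immediate: the points $P_1,\dots,P_5$ form the $\langle\chi\rangle$-orbit of some $p_1\in\mathbb{P}^1(\mathbf{L})$, and by simple transitivity of $T$ there is $t\in T$ with $t\cdot p_1=[1:1]$; since $t$ commutes with $\sigma$ it carries the orbit of $p_1$ onto the orbit of $[1:1]$. Thus after applying automorphisms of $Q$ every admissible configuration becomes the single orbit $\{\chi^{n}\cdot[1:1]\}_{n=0}^{4}$, which gives uniqueness. I expect the main obstacle to be exactly the point distinguishing this statement from Proposition~\ref{prop:4^kwithoutfifth}: over $\mathbf{L}$ the two coordinates of a point may be moved independently, whereas an automorphism of $Q$ must respect the descent datum and therefore act diagonally through $\tau$; the free and transitive action of the $\mathbf{F}$-rational torus $T$ is precisely what supplies the single simultaneous adjustment $(t,\tau(t))$ needed to normalise the base point. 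The hypothesis of general position is not required for the comparison, but it is consistent with it: the image orbit $\{(\chi^{n}[1:1],\tau(\chi^{n}[1:1]))\}$ is the $\eta(\sigma)$-orbit already shown to be in general position in Proposition~\ref{prop:4^kwithoutfifth}, and since all $\langle\chi\rangle$-orbits are $\mathrm{Aut}(Q)$-equivalent they are all in general position.
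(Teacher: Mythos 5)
Your argument is correct, and it shares the paper's skeleton: reduce the order-$5$ subgroup to the standard generator of Lemma~\ref{lemma:repwithoutxi4^k} inside $\mathrm{PGL}_2(\mathbf{L})$ with $\mathbf{L}=\mathbb{F}_{4^{2k+1}}$, pass to $Q_{\mathbf{L}}$ via Remark~\ref{remark:descendautomorphism}, normalise the base point to $([1:1],[1:1])$, and identify the resulting orbit. But the step where you diverge is exactly the delicate one, and your treatment is the more solid of the two. The paper normalises the base point by the automorphism $\left\langle \bigl(\begin{smallmatrix} a & 0\\ 0 & b \end{smallmatrix}\bigr), \mathrm{id}\right\rangle$ and asserts that it commutes with $\alpha(\sigma)$; for the companion-matrix generator \eqref{eq2:representationwithout5root} this is false, since conjugating $\bigl(\begin{smallmatrix} c & 1\\ 1 & 0 \end{smallmatrix}\bigr)$, where $c=\xi+\xi^4$, by $\mathrm{diag}(a,b)$ yields $\bigl(\begin{smallmatrix} c & a/b\\ b/a & 0 \end{smallmatrix}\bigr)$, which agrees with it in $\mathrm{PGL}_2$ only when $[a:b]=[1:1]$; the commutation claim is a carry-over from Proposition~\ref{prop:4^kwithfifth}, where the generator is diagonal. (Relatedly, the paper's exclusion of $a=0$ or $b=0$ sits oddly with its own displayed orbit, which contains $([0:1],[0:1])$ and $([1:0],[1:0])$.) Your replacement for this step --- the centraliser $T=C_{\mathrm{PGL}_2(\mathbf{L})}(\chi)$, a nonsplit torus of order $q+1$ with $q=4^{2k+1}$, acting simply transitively on $Q(\mathbf{F})=\mathbb{P}^1(\mathbf{L})$ because every nontrivial element of $T$ has only the two non-$\mathbf{L}$-rational eigenlines of $\chi$ as fixed points --- supplies exactly the commuting $\mathbf{F}$-automorphisms that the argument needs, automatically compatible with the descent datum as $(t,\tau(t))$. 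Beyond repairing the gap, it proves more: all $\langle\chi\rangle$-orbits on $Q(\mathbf{F})$ are $\mathrm{Aut}(Q)$-equivalent, so the explicit orbit computation is needed only for the general-position check, which your hypothesis in any case guarantees. One small correction to your closing remark: Proposition~\ref{prop:4^kwithoutfifth} does not actually verify general position of the $\eta(\sigma)$-orbit of $([1:1],[1:1])$ --- in the paper that verification appears only in the proof of the present proposition --- but, as you yourself observe, it also follows from the hypothesis combined with your transitivity statement, so nothing is lost.
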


\begin{proof}
 Let us study the possible sets of $5$ points in general position on $Q.$ By Lemma~\ref{lemma:repwithoutxi4^k} a generator  of $\mathbb{Z}/5\mathbb{Z}$  in  $\mathrm{PGL}_2(\mathbb{F}_{4^{2k+1}})$  is conjugate to the matrix~\eqref{eq2:representationwithout5root} for some non-trivial fifth root of unity $\xi \in \overline{\mathbb{F}}_2$.  Therefore, we get that up to conjugation the subgroup $\mathbb{Z}/5\mathbb{Z}$ in~$\mathrm{Aut}(Q)$ is generated by
$$
\alpha(\sigma)=
\left\langle\begin{pmatrix}
\xi+\xi^4 & 1\\
1 & 0
\end{pmatrix}, 
 \mathrm{id}
 \right\rangle.
$$

\noindent By Remark~\ref{remark:descendautomorphism} the automorphism $\alpha(\sigma)$ of $Q$ extends to the automorphism 
$$
\alpha(\sigma)_{\mathbb{F}_{4^{2k+1}}}=
\left\langle\begin{pmatrix}
\xi+\xi^4 & 1\\
1 & 0
\end{pmatrix}, 
\begin{pmatrix}
\xi^2+\xi^3 & 1\\
1 & 0
\end{pmatrix}, 
 \mathrm{id}
 \right\rangle
$$
\noindent on $Q_{\mathbb{F}_{4^{2k+1}}}$ since by Lemma~\ref{lemma:xi+xi^4}\ref{lemma:xi+xi^4galois} the elements $\xi+\xi^4$ and $\xi^2+\xi^3$ in $\mathbb{F}_{4^{2k+1}}$ are Galois conjugate.

 Without loss of generality we can assume that $P_1$ is $([1:1],[1:1]) \in Q_{\mathbb{F}_{2^{2k+1}}}.$ Indeed, any point of the form 
\begin{equation}\label{eq:abcdQ}
 ([a:b],[\psi(a):\psi(b)]) \in \mathbb{P}^1 \times \mathbb{P}^1,
\end{equation}
 \noindent where $a$ and $b$ are non-zero and $\psi \in \mathrm{Gal}(\mathbb{F}_{4^{2k+1}}/\mathbb{F}_{2^{2k+1}})$ is a non-trivial element, can be obtained from the point  $([1:1],[1:1])$ by the action of the automorphism
$$
\left\langle\begin{pmatrix}
a & 0\\
0 & b
\end{pmatrix}, 
 \mathrm{id}
 \right\rangle.
$$
\noindent This automorphism commutes with $\alpha(\sigma).$ If $a$ or $b$  is zero, then the orbit of the point \eqref{eq:abcdQ} is not in general position.

 Let us consider the orbit of the point 
$$
P_1=([1:1],[1:1]) \in Q_{\mathbb{F}_{2^{2k+1}}}
$$
\noindent under the action of the group generated by $\alpha(\sigma)_{\mathbb{F}_{4^{2k+1}}}.$ It is
\begin{gather*}
([1:1],[1:1]); \; ([\xi^2+\xi^3:1], [\xi+\xi^4:1]); \; ([0:1], [0:1]); \\
 ([1:0],[1:0]); \; ([\xi+\xi^4:1],[\xi^2+\xi^3:1]). 
\end{gather*}

\noindent It is not hard to see that the points of this quintuple are in general position. Therefore, we get that there is only one orbit of $5$ points  in general position  on~$Q$  up to the action of the group~$\mathrm{Aut}(Q) \simeq \mathrm{PGL}_2(\mathbb{F}_{4^{2k+1}}) \rtimes \mathbb{Z}/2\mathbb{Z}.$

\end{proof}

\section{The Weyl group of type $\mathrm{E}_6$}\label{section:Weyl}
In this section we prove some lemmas concerning the Weyl group $W(\mathrm{E}_6).$ At the beginning we recall  the structure of the group $W(\mathrm{E}_6).$

\begin{Lemma}[{\cite[\S3.12.4]{finitesimplegroups}}]\label{lemma:w=psuz/2z}
For the Weyl group $W(\mathrm{E}_6)$ we have
$$
W(\mathrm{E}_6) \simeq \mathrm{PSU}_4(\mathbb{F}_2) \rtimes \mathbb{Z}/2\mathbb{Z}
$$
\noindent with a non-trivial action of $\mathbb{Z}/2\mathbb{Z}$ on $\mathrm{PSU}_4(\mathbb{F}_2).$ 
\end{Lemma}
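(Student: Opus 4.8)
The plan is to establish this as standard structure theory of the Weyl group $W(\mathrm{E}_6)$, whose order is $51\,840 = 2 \cdot 25\,920$. The key is to locate the simple subgroup of index $2$ and then analyze the resulting split extension and its action.

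First I would consider the sign homomorphism $\det \colon W(\mathrm{E}_6) \to \{\pm 1\}$ given by the determinant of the action on the reflection representation. Its kernel $W^{+} := W(\mathrm{E}_6)^{+}$ has index $2$ and order $25\,920$. To identify $W^{+}$ with $\mathrm{PSU}_4(\mathbb{F}_2)$, I would use the reduction of the $\mathrm{E}_6$ root lattice modulo $2$: the induced quadratic form on $\mathbb{F}_2^6$ is nondegenerate of minus type, and $W(\mathrm{E}_6)$ acts preserving it, yielding a homomorphism $W(\mathrm{E}_6) \to \mathrm{O}_6^{-}(\mathbb{F}_2)$. Since $|\mathrm{O}_6^{-}(\mathbb{F}_2)| = 51\,840$ and the map can be checked to be injective (no nonidentity element of $W(\mathrm{E}_6)$ acting trivially modulo $2$), it is an isomorphism; the derived subgroup of $\mathrm{O}_6^{-}(\mathbb{F}_2)$ is the simple group $\mathrm{PSU}_4(\mathbb{F}_2)$ of index $2$, and it matches $W^{+}$ by the order count. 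Alternatively one may invoke directly the classical exceptional isomorphism $\mathrm{O}_6^{-}(\mathbb{F}_2) \cong \mathrm{PSU}_4(\mathbb{F}_2) \rtimes \mathbb{Z}/2\mathbb{Z}$.

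Next I would show the extension splits and pin down the action. Any reflection $s \in W(\mathrm{E}_6)$ is an involution with $\det(s) = -1$, so $\langle s \rangle \cong \mathbb{Z}/2\mathbb{Z}$ projects isomorphically onto the quotient $W(\mathrm{E}_6)/W^{+}$; hence $\langle s \rangle$ is a complement to $W^{+}$ and $W(\mathrm{E}_6) \cong W^{+} \rtimes \mathbb{Z}/2\mathbb{Z} \cong \mathrm{PSU}_4(\mathbb{F}_2) \rtimes \mathbb{Z}/2\mathbb{Z}$. To see that the action is nontrivial, I would use that the center $Z(W(\mathrm{E}_6))$ is trivial: for type $\mathrm{E}_6$ the element $-\mathrm{id}$ does not belong to the Weyl group, so $W(\mathrm{E}_6)$ has no central involution. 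If the conjugation action of $s$ on $W^{+}$ were trivial, then $W(\mathrm{E}_6) \cong \mathrm{PSU}_4(\mathbb{F}_2) \times \mathbb{Z}/2\mathbb{Z}$ would have a central $\mathbb{Z}/2\mathbb{Z}$, a contradiction; therefore the action is nontrivial.

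The main obstacle is the second step, the precise identification of $W^{+}$ with $\mathrm{PSU}_4(\mathbb{F}_2)$ rather than merely with an abstract simple group of order $25\,920$: carrying out the reduction-modulo-$2$ argument, verifying injectivity of $W(\mathrm{E}_6) \to \mathrm{O}_6^{-}(\mathbb{F}_2)$, and matching it against the standard description of $\mathrm{O}_6^{-}(\mathbb{F}_2)$ is the technical heart. The splitting and the nontriviality of the action, by contrast, follow cleanly from the existence of a reflection and the vanishing of the center. This is why in the paper we simply cite~\cite[\S3.12.4]{finitesimplegroups}.
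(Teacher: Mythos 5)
Your proposal is correct, but note that the paper does not prove this lemma at all: it is quoted verbatim from Wilson's book (the citation \cite[\S3.12.4]{finitesimplegroups}), so there is no internal argument to compare against. What you have written is essentially the standard proof from that literature, fleshed out: reduction of the $\mathrm{E}_6$ root lattice $L$ modulo $2$ gives a nondegenerate quadratic form of minus type on $\mathbb{F}_2^6$ (even a nice sanity check: the $63$ nonzero vectors split as $36$ nonsingular ones, the images of the root pairs, and $27$ singular ones, matching the $27$ lines), and the order count $|\mathrm{O}_6^-(\mathbb{F}_2)|=51\,840=|W(\mathrm{E}_6)|$ together with injectivity yields the exceptional isomorphism, after which the reflection gives a splitting and triviality of the center (via Schur's lemma on the irreducible reflection representation plus $-\mathrm{id}\notin W(\mathrm{E}_6)$) forces the action to be nontrivial. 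Two small points you flag as the technical heart can be dispatched cleanly: injectivity follows because the kernel of $\mathrm{O}(L)\to\mathrm{O}(L/2L)$ for the $\mathrm{E}_6$ lattice is $\{\pm\mathrm{id}\}$ and $-\mathrm{id}\notin W(\mathrm{E}_6)$; and the identification $W^{+}\simeq\mathrm{PSU}_4(\mathbb{F}_2)$ is unambiguous because the abelianization of $W(\mathrm{E}_6)$ is $\mathbb{Z}/2\mathbb{Z}$ (all reflections are conjugate up to sign considerations), so $W^{+}$ is the unique index-$2$ subgroup and must coincide with the derived subgroup, which under the isomorphism is $\mathrm{O}_6^-(\mathbb{F}_2)'\simeq\mathrm{PSU}_4(\mathbb{F}_2)$. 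So your argument is sound and self-contained where the paper simply defers to a reference.
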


Now we recall the results from~\cite[Table 9]{Carter} and~\cite[Theorem 1]{SwDyer} about the conjugacy classes of the elements of order $2$, $5$ and $10$ in the Weyl group~$W(\mathrm{E}_6).$

\begin{Lemma}\label{lemma:conjugacyclassese6}
In Table~\ref{table:conjugacyclassese6} we list conjugacy classes of elements in the group~$W(\mathrm{E}_6)$ of orders $2$, $5$ and $10,$ their cardinalities, orders of their centralizers, collections of eigenvalues of the representation on the root system $\mathrm{E}_6.$

\begin{table}[H]
\begin{center}
\setlength\extrarowheight{5pt}
\begin{tabular}{|c|c|c|c|c|}
\hline
\text{Order of} & \text{Conjugacy class} & \text{Cardinality} & \text{Order of} & \text{Eigenvalues}\\
\text{element} &  &  &  \text{centralizer} & \\
 \hline
$2$ & $A_1$ & $36$  & $1440$ & $-1,1,1,1,1,1$  \\[5pt]
\hline
$2$ & $A_1^2$ & $270$ & $192$ & $-1,-1,1,1,1,1$ \\[5pt]
\hline
$2$ & $A_1^3$ & $540$ & $96$ & $-1,-1,-1,1,1,1$  \\[5pt]
\hline
$2$ & $A_1^4$ & $45$ & $1152$ & $-1,-1,-1,-1,1,1$  \\[5pt]
\hline
$5$ & $A_4$ & $5184$ & $10$ & $\xi,\xi^2,\xi^3,\xi^4,1,1$  \\[5pt]
\hline
$10$ & $A_4 \times A_1$ & $5184$ & $10$ & $\xi,\xi^2,\xi^3,\xi^4,-1,1$  \\[5pt]
\hline
\end{tabular}
\vspace*{3mm}
\caption{Table of conjugacy classes of elements of order $2$, $5$ and~$10$ in $W(\mathrm{E}_6)$.}
\label{table:conjugacyclassese6}
\end{center}
\end{table}
\noindent Here $\xi \in \mathbb{C}$ is a non-trivial fifth root of unity.

\end{Lemma}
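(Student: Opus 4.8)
The plan is to obtain the six rows of Table~\ref{table:conjugacyclassese6} by extracting them from Carter's classification of conjugacy classes in Weyl groups \cite[Table 9]{Carter} and from the explicit class data of Swinnerton-Dyer \cite[Theorem 1]{SwDyer}, and then to verify the entries by an independent geometric computation. The organizing principle is the following: $|W(\mathrm{E}_6)|=2^7\cdot 3^4\cdot 5=51\,840$, and by Carter every conjugacy class of $W(\mathrm{E}_6)$ is labelled by an admissible diagram; when that diagram is the Coxeter (Dynkin) diagram of an honest sub-root-system $\Gamma\subset\mathrm{E}_6$, the corresponding element acts on the $6$-dimensional reflection representation as the Coxeter element of $\Gamma$ on the span of $\Gamma$, and trivially on the orthogonal complement. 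All six classes we need are of this untwisted kind, so the whole lemma reduces to reading off the diagram, the cardinality and the centralizer order for each class of order $2$, $5$ or $10$, together with a short eigenvalue computation.

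First I would pin down which classes actually occur. Since a Sylow $5$-subgroup of $W(\mathrm{E}_6)$ is cyclic of order $5$, every element of order $5$ or $10$ has its $5$-part conjugate into a fixed $\mathbb{Z}/5\mathbb{Z}$; from Carter's table these produce precisely the classes $A_4$ (order $5$) and $A_4\times A_1$ (order $10$). The involutions are exactly the elements with diagram $A_1^{k}$, and in $\mathrm{E}_6$ these occur for $k=1,2,3,4$. For the cardinalities and centralizer orders I would read them from \cite{SwDyer} and cross-check each row against the identity $|\mathrm{class}|\cdot|\mathrm{centralizer}|=51\,840$, which holds in all six rows; for instance $36\cdot 1440=270\cdot 192=540\cdot 96=45\cdot 1152=5184\cdot 10=51\,840$. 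As a further sanity check one notes that the number of reflections equals the number of positive roots, namely $36$, matching the cardinality of the class $A_1$.

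Next I would compute the eigenvalues on the root system $\mathrm{E}_6$. An involution of type $A_1^{k}$ is a product of $k$ pairwise orthogonal reflections, so it has $k$ eigenvalues equal to $-1$ and $6-k$ equal to $1$; this yields the first four rows. The Coxeter element of an $A_4$ subsystem is a $5$-cycle acting on the $4$-dimensional span of $A_4$, so its eigenvalues there are the four primitive fifth roots of unity $\xi,\xi^2,\xi^3,\xi^4$, while the $2$-dimensional orthogonal complement contributes $1,1$; this is the row $A_4$. For $A_4\times A_1$ the same four $A_4$-eigenvalues occur, the orthogonal $A_1$ contributes a single $-1$, and the remaining one-dimensional summand contributes $1$, giving $\xi,\xi^2,\xi^3,\xi^4,-1,1$.

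The main point requiring care is \emph{completeness} rather than any single computation: one must be sure that no conjugacy class of order $2$, $5$ or $10$ has been omitted, and that each listed diagram is a genuine sub-root-system Coxeter diagram so that the eigenvalue recipe applies verbatim. For orders $5$ and $10$ this is clean: the Sylow $5$-subgroup is $\mathbb{Z}/5\mathbb{Z}$, so all order-$5$ elements are conjugate (the single class $A_4$); and the centralizer of an order-$5$ element, having order $10$ and containing that element in its center, is forced to be cyclic of order $10$, hence contains a unique involution, so there is exactly one order-$10$ class, namely $A_4\times A_1$. For the involutions I would use that every involution of a Weyl group is a product of pairwise orthogonal reflections and that a maximal orthogonal set of roots in $\mathrm{E}_6$ has size $4$, so that $k\in\{1,2,3,4\}$; Carter's table then confirms that each such $k$ yields a single $W(\mathrm{E}_6)$-orbit $A_1^{k}$ with no primed variants. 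With completeness secured, the six tabulated rows are exactly the required data, which proves the lemma.
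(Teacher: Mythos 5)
Your proposal is correct and is essentially the paper's approach: the paper offers no proof of this lemma at all, presenting the table as a direct recollection of Carter's Table 9 and Swinnerton-Dyer's Theorem 1, which are exactly the sources you extract the six rows from, and your added checks (the identity $|\mathrm{class}|\cdot|\mathrm{centralizer}|=51\,840$ in every row, and the eigenvalue computation via orthogonal reflections for $A_1^k$ and the Coxeter element of $A_4$) are sound supplements. One caveat: a cyclic Sylow $5$-subgroup does not by itself imply that all order-$5$ elements are conjugate (this fails already in $\mathbb{Z}/10\mathbb{Z}$); in $W(\mathrm{E}_6)$ one needs either that the normalizer of the Sylow $5$-subgroup induces the full $\mathrm{Aut}(\mathbb{Z}/5\mathbb{Z})\simeq\mathbb{Z}/4\mathbb{Z}$ (which Sylow's congruence forces once the centralizer is known to have order $10$), or simply the completeness of Carter's list --- which your argument falls back on anyway, so the slip is harmless.
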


\begin{Lemma}\label{lemma:centralizerconjugacyclassA4}
The centralizer of an element of order $5$ in $W(\mathrm{E}_6)$ is  isomorphic to~$\mathbb{Z}/10\mathbb{Z}.$
\end{Lemma}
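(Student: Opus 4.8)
The statement to prove is Lemma~\ref{lemma:centralizerconjugacyclassA4}: the centralizer of an order-$5$ element in $W(\mathrm{E}_6)$ is isomorphic to $\mathbb{Z}/10\mathbb{Z}$.

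$\textbf{Plan.}$ The fastest route is to read off the order of the centralizer directly from Table~\ref{table:conjugacyclassese6} and then identify the group-theoretic structure. From the row labelled $A_4$ (the unique conjugacy class of elements of order $5$ by Lemma~\ref{lemma:conjugacyclassese6}), the order of the centralizer is $10$. So the first step is simply to invoke the table: if $g \in W(\mathrm{E}_6)$ has order $5$, then $|Z(g)| = 10$, where $Z(g)$ denotes the centralizer. It then remains to show that a group of order $10$ containing $g$ in its centre is cyclic, i.e. to rule out $Z(g) \simeq \mathfrak{S}_5$-style behaviour — but since $|Z(g)| = 10 = 2 \cdot 5$, there are only two groups of order $10$ up to isomorphism, namely $\mathbb{Z}/10\mathbb{Z}$ and the dihedral group $D_{10}$.

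$\textbf{Ruling out the dihedral case.}$ The key observation is that $g$ itself, being of order $5$, lies in $Z(g)$ and is obviously central in $Z(g)$. In the dihedral group $D_{10}$ of order $10$, the centre is trivial, so no element of order $5$ is central. Hence $Z(g)$ cannot be dihedral and must be cyclic of order $10$. Concretely, I would argue: $\langle g \rangle \simeq \mathbb{Z}/5\mathbb{Z}$ is a normal (indeed central) subgroup of $Z(g)$ of index $2$; choosing any element $h \in Z(g) \setminus \langle g \rangle$, the element $h$ has order dividing $10$, and since $h$ commutes with $g$, the product has the expected order. An even cleaner phrasing: an order-$10$ group with a central element of order $5$ has a central Sylow $5$-subgroup, and one checks it also has central Sylow structure forcing abelianness, so it is $\mathbb{Z}/2\mathbb{Z} \times \mathbb{Z}/5\mathbb{Z} \simeq \mathbb{Z}/10\mathbb{Z}$.

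$\textbf{Cross-check via eigenvalues (optional reinforcement).}$ As a sanity check one can corroborate this using the eigenvalue data. The class $A_4$ has eigenvalues $\xi,\xi^2,\xi^3,\xi^4,1,1$ on the root lattice; the element $-g$ (multiplication by $-1$ composed with $g$, available since $-1 \in W(\mathrm{E}_6)$ as the longest element acts as $-\mathrm{id}$ on the $\mathrm{E}_6$ reflection representation — though one should verify this carefully for $\mathrm{E}_6$, where $w_0$ acts by $-1$ only after composing with a diagram automorphism) gives an order-$10$ element commuting with $g$, exhibiting an explicit generator of the cyclic centralizer. The main obstacle is purely bookkeeping: confirming that the centralizer order from the table is $10$ and that the only abelian possibility is realised. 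Since $10$ admits the single non-abelian group $D_{10}$, and $g$ central rules it out, the argument is immediate and there is no serious difficulty; the entire content is the elementary classification of groups of order $10$ together with the table entry.
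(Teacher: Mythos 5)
Your proof is correct and follows essentially the same route as the paper: read off the centralizer order $10$ from Table~\ref{table:conjugacyclassese6}, note that the only groups of order $10$ are $\mathbb{Z}/10\mathbb{Z}$ and the dihedral group, and rule out the dihedral case because $g$ is central in its own centralizer while the dihedral group of order $10$ has trivial centre. Your optional eigenvalue cross-check should simply be dropped: as you yourself suspect, $-\mathrm{id}$ does \emph{not} lie in $W(\mathrm{E}_6)$ (the longest element acts as $-1$ only composed with the diagram automorphism), but this does not affect the main argument, which is complete without it.
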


\begin{proof}
By Lemma \ref{lemma:conjugacyclassese6} the order of the centralizer of an element $g$ of order $5$ in~$W(\mathrm{E}_6)$ is equal to $10.$ This means that the centralizer of $g$ in the Weyl group is isomorphic either to $\mathbb{Z}/10\mathbb{Z},$ or to a dihedral group of order $10,$ since these two groups are the only groups of order $10$ up to isomorphism. However, the second case is impossible because $g$ lies in its centralizer.

\end{proof}

\begin{Lemma}\label{lemma:a1anda4}
Let $\tau \in W(\mathrm{E}_6)$ be an element of order $10.$ Then $\tau^5$ lies in the conjugacy class $A_1.$

\end{Lemma}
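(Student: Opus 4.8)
The plan is to read off the answer from the eigenvalue data recorded in Table~\ref{table:conjugacyclassese6}. First I would note that since $\tau$ has order exactly $10$, the power $\tau^5$ has order $10/\gcd(5,10)=2$, so it is a genuine involution (in particular $\tau^5 \neq e$). Next, by Lemma~\ref{lemma:conjugacyclassese6} the only conjugacy class of elements of order $10$ in $W(\mathrm{E}_6)$ is $A_4 \times A_1$, so $\tau$ must lie in this class. Consequently, in the representation on the root system $\mathrm{E}_6$, the element $\tau$ has the collection of eigenvalues $\xi,\xi^2,\xi^3,\xi^4,-1,1$, where $\xi$ is a primitive fifth root of unity.

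I would then compute the eigenvalues of $\tau^5$ directly: they are the fifth powers of the eigenvalues of $\tau$. Since $\xi^5=1$, the four eigenvalues $\xi,\xi^2,\xi^3,\xi^4$ all become $1$, while $(-1)^5=-1$ and $1^5=1$. Hence the eigenvalue collection of $\tau^5$ is $-1,1,1,1,1,1$.

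Finally, I would match this collection against the table. The involutions of $W(\mathrm{E}_6)$ form precisely the four classes $A_1, A_1^2, A_1^3, A_1^4$, and these are distinguished by the number of eigenvalues equal to $-1$, namely $1,2,3,4$ respectively. Since the eigenvalue collection is a conjugacy invariant (it is determined by the characteristic polynomial of the action on the root lattice, which is constant on conjugacy classes), and since $\tau^5$ has exactly one eigenvalue equal to $-1$, it follows that $\tau^5$ lies in the class $A_1$.

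There is no serious obstacle here: the argument is a direct computation of eigenvalues under the fifth power, combined with the observation that within the involution classes the number of $-1$ eigenvalues already separates $A_1$ from $A_1^2$, $A_1^3$, $A_1^4$. The only point requiring any care is confirming that the eigenvalue collection suffices to pin down the class among order-$2$ elements, which Table~\ref{table:conjugacyclassese6} makes transparent.
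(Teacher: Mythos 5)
Your proposal is correct and follows essentially the same route as the paper: it identifies $\tau$ with the unique order-$10$ class $A_4\times A_1$ from Table~\ref{table:conjugacyclassese6}, takes fifth powers of the eigenvalues to get $-1,1,1,1,1,1$, and matches this against the involution classes. You are slightly more careful than the paper in spelling out why the eigenvalue collection pins down the class $A_1$ among the four involution classes (the paper leaves this implicit), but the argument is the same.
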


\begin{proof}
This directly follows from Lemma~\ref{lemma:conjugacyclassese6}.   By Table~\ref{table:conjugacyclassese6} we see that the eigenvalues of an element lying in $A_4 \times A_1$ are
$$
\xi, \xi^2, \xi^3, \xi^4, -1,1,
$$
\noindent where $\xi \in \mathbb{C}$ is a non-trivial fifth root of unity. We have $g=\tau^5.$  So the eigenvalues of $\tau^5$ are
$$
-1,1,1,1,1,1
$$
\noindent which are the eigenvalues of elements in conjugacy class $A_1.$

\end{proof}

\begin{Cor}\label{lemma:a1commutewitha4}
Let $g \in W(\mathrm{E}_6)$ be an element of order $2$ which lies in the centralizer of an element of order $5.$ Then $g$ lies in the conjugacy class $A_1.$
\end{Cor}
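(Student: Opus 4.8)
The plan is to combine the explicit description of the centralizer of an order-$5$ element with Lemma~\ref{lemma:a1anda4}. Let $h \in W(\mathrm{E}_6)$ be an element of order $5$ whose centralizer contains $g$. By Lemma~\ref{lemma:centralizerconjugacyclassA4} this centralizer is isomorphic to $\mathbb{Z}/10\mathbb{Z}$, so $g$ and $h$ both lie in a common cyclic group of order $10$; in particular they commute, and $g$ is the unique involution of $\mathbb{Z}/10\mathbb{Z}$.

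Next I would set $\tau = gh$. Inside $\mathbb{Z}/10\mathbb{Z}$ the product of the unique element of order $2$ with an element of order $5$ has order $10$, so $\tau$ is an element of order $10$ in $W(\mathrm{E}_6)$. Since $g$ and $h$ commute, $h^5 = e$, and $g^5 = g$ (as $g^2 = e$ and $5$ is odd), I would compute $\tau^5 = g^5 h^5 = g$.

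Finally, Lemma~\ref{lemma:a1anda4} tells us that the fifth power of any order-$10$ element of $W(\mathrm{E}_6)$ lies in the conjugacy class $A_1$. Applying this to $\tau$ gives $g = \tau^5 \in A_1$, which is the assertion. There is essentially no obstacle here: the only point requiring care is that $g$ and $h$ generate an element of order exactly $10$, and this is forced by the identification of the centralizer as a cyclic group in Lemma~\ref{lemma:centralizerconjugacyclassA4}. Without that structural input one could not rule out a~priori that $g$ belongs to one of the other involution classes $A_1^2$, $A_1^3$ or $A_1^4$, which is precisely what the eigenvalue bookkeeping in Lemma~\ref{lemma:conjugacyclassese6} (via Lemma~\ref{lemma:a1anda4}) excludes.
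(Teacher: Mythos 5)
Your proof is correct and follows essentially the same route as the paper: form $\tau = gh$ of order $10$, note $\tau^5 = g$, and apply Lemma~\ref{lemma:a1anda4}. The only (harmless) difference is that you invoke Lemma~\ref{lemma:centralizerconjugacyclassA4} to see that $gh$ has order $10$, whereas the paper gets this directly from the elementary fact that commuting elements of coprime orders $2$ and $5$ have product of order $10$.
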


\begin{proof}
Let $h$ be an element of order $5$ in $W(\mathrm{E}_6).$ Then as $g$ is in the centralizer of $h$ the element $gh$ is of order $10.$ By Lemma~\ref{lemma:conjugacyclassese6} there is only one conjugacy class~$A_4 \times A_1$ of elements of order $10$ in $W(\mathrm{E}_6).$  Therefore, by Lemma~\ref{lemma:a1anda4} the element $g$ of order $2$ lies in the conjugacy class $A_1.$

\end{proof}

Now we study subgroups in $\mathrm{PSU}_4(\mathbb{F}_2)$ and in the Weyl group $W(\mathrm{E}_6).$

\begin{Lemma}[{\cite[p.~26]{Atlas}}]\label{lemma:maxsubgroupspsu4}
The maximal proper subgroups in $\mathrm{PSU}_4(\mathbb{F}_2)$ are listed in Table~\ref{table:maximalsubgroups}.
\begin{table}[H]
\begin{center}
\setlength\extrarowheight{5pt}
\begin{tabular}{|c|c|}
\hline
\text{Subgroup} & \text{Order}\\
\hline
$\left(\mathbb{Z}/2\mathbb{Z}\right)^4 \rtimes \mathfrak{A}_5$ & $960$  \\[5pt]
\hline
$\mathfrak{S}_6$ & $720$ \\[5pt]
\hline
$\left(3^{1+2}_{+}\rtimes \mathbb{Z}/2\mathbb{Z}\right)_{\sbullet[.40]}\mathfrak{A}_4$ & $648$  \\[5pt]
\hline
$\left(\mathbb{Z}/3\mathbb{Z}\right)^3 \rtimes \mathfrak{S}_4$ & $648$  \\[5pt]
\hline
$\left( \mathbb{Z}/2\mathbb{Z}^{\, \sbullet[.40]}\left(\mathfrak{A}_4 \times \mathfrak{A}_4\right)\right)_{\sbullet[.40]}\mathbb{Z}/2\mathbb{Z}$ & $576$   \\[5pt]
\hline
\end{tabular}
\vspace*{3mm}
\caption{Table of maximal subgroups in $\mathrm{PSU}_4(\mathbb{F}_2)$.}
\label{table:maximalsubgroups}
\end{center}
\end{table}

\noindent Here by $ 3^{1+2}_{+}$ we denote the extraspecial group of order $27$ with exponent $3,$ by $G_{\sbullet[.40]} H$ we denote a group with normal subgroup isomorphic to $G$ and quotient is isomorphic to $H$ and by $G^{\, \sbullet[.40]} H$ we denote the group $G_{\sbullet[.40]} H$ which is not a split extension.   All  subgroups in Table~\ref{table:maximalsubgroups} are unique up to conjugation.
\end{Lemma}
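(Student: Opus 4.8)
The statement records the five conjugacy classes of maximal subgroups of the simple group $G = \mathrm{PSU}_4(\mathbb{F}_2)$, of order $25\,920 = 2^6 \cdot 3^4 \cdot 5$. Since everything happens inside one small group, the plan is to treat \emph{existence} (each entry of Table~\ref{table:maximalsubgroups} really occurs as a maximal subgroup) and \emph{completeness} (there are no others) separately, exploiting the exceptional isomorphisms
$$
\mathrm{PSU}_4(\mathbb{F}_2) \simeq \mathrm{PSp}_4(\mathbb{F}_3) \simeq W(\mathrm{E}_6)'
$$
(the last by Lemma~\ref{lemma:w=psuz/2z}, since $\mathrm{PSU}_4(\mathbb{F}_2)$ is the simple index-$2$ subgroup of $W(\mathrm{E}_6)$). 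Each description makes a different family of subgroups transparent.

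For existence I would realize every entry as a point stabilizer of one of the primitive actions of $G$ of degrees $27$, $36$, $40$, $40$, $45$, whose product with the subgroup order is $25\,920$. Using $G \simeq W(\mathrm{E}_6)'$, the transitive action on the $27$ lines of a cubic surface has stabilizer of order $960$ (the first entry), and the action on the $36$ double-sixes has stabilizer $\mathfrak{S}_6$ of order $720$; the latter is also the classical symplectic subgroup $\mathrm{Sp}_4(\mathbb{F}_2) \simeq \mathfrak{S}_6$. Using $G \simeq \mathrm{PSp}_4(\mathbb{F}_3)$, the two maximal parabolic subgroups — the stabilizer of an isotropic point and that of a maximal totally isotropic subspace, each of index $40$ — give the two subgroups of order $648$, interchanged by the outer automorphism of $G$. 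Finally, in the unitary geometry over $\mathbb{F}_4$ the stabilizer of one of the $45$ isotropic points of the Hermitian surface has order $576$. The precise isomorphism types in the table are then obtained by analysing the action induced on the stabilized configuration.

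The harder part is completeness. I would invoke Aschbacher's subgroup structure theorem for $\mathrm{SU}_4(\mathbb{F}_2)$ (which equals $\mathrm{PSU}_4(\mathbb{F}_2)$, as its center is trivial): every maximal subgroup either lies in one of the geometric classes $\mathcal{C}_1, \dots, \mathcal{C}_8$ (stabilizers of subspaces, decompositions, field structures, subfields and classical forms) or belongs to the class $\mathcal{S}$ of almost simple subgroups acting absolutely irreducibly. The geometric classes reproduce exactly the five stabilizers above, and one must check that each is maximal and that no two coincide or are contained one in another. The main obstacle is eliminating the class $\mathcal{S}$: one has to show that no nonabelian simple group other than $\mathrm{PSU}_4(\mathbb{F}_2)$ itself has a suitable absolutely irreducible $4$-dimensional representation over $\mathbb{F}_4$ preserving a unitary form, which in this rank reduces to inspecting a very short list of candidates. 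Because $|G|$ is small, I would in practice confirm both existence and completeness by a direct machine computation with the permutation representation on $27$ points, which yields precisely the five classes — together with their uniqueness up to conjugation — in agreement with the Atlas.
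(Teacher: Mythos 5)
The paper gives no proof of this lemma at all: the statement is quoted directly from the Atlas \cite{Atlas}, so the paper's ``proof'' is an appeal to that reference. Your proposal therefore attempts strictly more than the paper does, and its existence half is essentially sound and genuinely illuminating: the five subgroups of Table~\ref{table:maximalsubgroups} are indeed the point stabilizers of the primitive actions of degrees $27$, $36$, $40$, $40$, $45$, visible through the isomorphisms $\mathrm{PSU}_4(\mathbb{F}_2)\simeq \mathrm{PSp}_4(\mathbb{F}_3)\simeq W(\mathrm{E}_6)'$ (the last one following from Lemma~\ref{lemma:w=psuz/2z}), and the orders check out: $25\,920/960=27$, $25\,920/720=36$, $25\,920/648=40$, $25\,920/576=45$.

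Two points, however, need correction. First, a concrete error: the two subgroups of order $648$ cannot be ``interchanged by the outer automorphism of $G$''. They are not isomorphic as abstract groups --- one has $3$-core the extraspecial group $3^{1+2}_{+}$, the other the elementary abelian group $\left(\mathbb{Z}/3\mathbb{Z}\right)^3$ --- and any automorphism carries a subgroup to an isomorphic one. In fact both conjugacy classes are stable under $\mathrm{Out}(G)$, which is exactly why each of them extends to a maximal subgroup of twice the order in $W(\mathrm{E}_6)\simeq \mathrm{PSU}_4(\mathbb{F}_2)\rtimes \mathbb{Z}/2\mathbb{Z}$. Second, the completeness half --- where all the real content lies, including maximality of the five stabilizers (i.e.\ primitivity of the five actions, which you assert rather than prove) and the uniqueness-up-to-conjugacy claim --- remains a plan rather than a proof: the Aschbacher analysis is not carried out (several geometric stabilizers, e.g.\ that of a nondegenerate $2$-space in the unitary geometry, turn out to be non-maximal and must be located inside larger subgroups), the class $\mathcal{S}$ is not actually eliminated, and you ultimately fall back on a machine computation. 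That fallback is a legitimate verification, but it makes your argument exactly as much an appeal to authority as the paper's citation of the Atlas; if you keep this route, the honest conclusion is to cite the Atlas (or the Bray--Holt--Roney-Dougal tables of maximal subgroups of low-dimensional classical groups, where the Aschbacher analysis for $\mathrm{SU}_4(\mathbb{F}_2)$ is carried out in full) rather than to present the sketch as a self-contained proof.
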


\begin{Lemma}\label{lemma:a_6inpsu4f2}
The subgroup in $W(\mathrm{E}_6)$ isomorphic to $\mathfrak{A}_6$ is unique up to conjugation.
\end{Lemma}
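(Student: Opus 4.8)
The plan is to reduce the statement to the subgroup structure of $\mathrm{PSU}_4(\mathbb{F}_2)$ recorded in Lemma~\ref{lemma:maxsubgroupspsu4}, exploiting that $\mathfrak{A}_6$ is simple. First I would invoke Lemma~\ref{lemma:w=psuz/2z} to write $W(\mathrm{E}_6) \simeq \mathrm{PSU}_4(\mathbb{F}_2) \rtimes \mathbb{Z}/2\mathbb{Z}$ and consider the projection $\pi\colon W(\mathrm{E}_6) \to \mathbb{Z}/2\mathbb{Z}$ with kernel $\mathrm{PSU}_4(\mathbb{F}_2)$. Given a subgroup $A \subset W(\mathrm{E}_6)$ with $A \simeq \mathfrak{A}_6$, the restriction $\pi|_A$ has kernel $A \cap \mathrm{PSU}_4(\mathbb{F}_2)$, which is normal in $A$. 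Since $\mathfrak{A}_6$ is simple and non-abelian, it admits no surjection onto $\mathbb{Z}/2\mathbb{Z}$, so this kernel must equal all of $A$. Hence every copy of $\mathfrak{A}_6$ already lies inside $\mathrm{PSU}_4(\mathbb{F}_2)$, and it suffices to prove uniqueness up to conjugation there.

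Next I would locate $A$ inside a maximal subgroup. As $|A| = 360 < 25\,920 = |\mathrm{PSU}_4(\mathbb{F}_2)|$, the subgroup $A$ is proper and therefore contained in some maximal subgroup $M$ from Table~\ref{table:maximalsubgroups}. By Lagrange's theorem $360 = 2^3 \cdot 3^2 \cdot 5$ must divide $|M|$; in particular $|M|$ must be divisible by both $5$ and $9$. Running through the five orders $960 = 2^6\cdot 3\cdot 5$, $720 = 2^4\cdot 3^2\cdot 5$, $648 = 2^3\cdot 3^4$, $648$, and $576 = 2^6\cdot 3^2$, the only one divisible by $360$ is $720$, corresponding to $M \simeq \mathfrak{S}_6$ (the orders $960$, $648$, $576$ fail the divisibility by $9$, $5$, $5$ respectively). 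Thus $A$ is contained in a subgroup isomorphic to $\mathfrak{S}_6$.

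Finally I would combine uniqueness of $\mathfrak{S}_6$ with uniqueness of $\mathfrak{A}_6$ inside it. By Lemma~\ref{lemma:maxsubgroupspsu4} the maximal subgroup $\mathfrak{S}_6$ is unique up to conjugation in $\mathrm{PSU}_4(\mathbb{F}_2)$. Inside $\mathfrak{S}_6$, the subgroup $A \simeq \mathfrak{A}_6$ has index $2$, and since index-$2$ subgroups are normal and $\mathfrak{S}_6$ has abelianization $\mathbb{Z}/2\mathbb{Z}$ (so a unique surjection onto $\mathbb{Z}/2\mathbb{Z}$), the group $\mathfrak{A}_6$ is the unique index-$2$ subgroup, namely the derived subgroup, which is characteristic. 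Therefore any conjugation of $\mathrm{PSU}_4(\mathbb{F}_2)$ carrying one copy of $\mathfrak{S}_6$ to another carries the $\mathfrak{A}_6$ inside the first to the $\mathfrak{A}_6$ inside the second, and all copies of $\mathfrak{A}_6$ are conjugate.

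I do not expect a serious obstacle here; the argument is essentially bookkeeping on top of the cited structure and subgroup tables. The one point requiring care is the reduction in the first paragraph—confirming that a simple $\mathfrak{A}_6$ cannot escape the normal subgroup $\mathrm{PSU}_4(\mathbb{F}_2)$—together with the numerical check that $\mathfrak{S}_6$ is the \emph{only} maximal subgroup whose order is divisible by $360$, which is what forces the embedding to be essentially unique.
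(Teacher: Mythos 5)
Your proposal is correct and takes essentially the same route as the paper's own proof: reduce to $\mathrm{PSU}_4(\mathbb{F}_2)$ using simplicity of $\mathfrak{A}_6$ and the projection from Lemma~\ref{lemma:w=psuz/2z}, then use the orders in Table~\ref{table:maximalsubgroups} to force $\mathfrak{A}_6$ into the maximal subgroup $\mathfrak{S}_6$, and conclude from the uniqueness of $\mathfrak{S}_6$ up to conjugation together with the uniqueness of $\mathfrak{A}_6$ inside $\mathfrak{S}_6$. You merely make explicit two points the paper leaves implicit, namely the divisibility bookkeeping ($360 \nmid 960, 648, 576$) and the fact that $\mathfrak{A}_6$ is the unique index-$2$ (hence characteristic) subgroup of $\mathfrak{S}_6$.
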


\begin{proof}
First of all, let us prove that the subgroup of $W(\mathrm{E}_6)$ isomorphic to $\mathfrak{A}_6$ lies in $\mathrm{PSU}_4(\mathbb{F}_2).$ Let us consider the canonical projection
$$
p: W(\mathrm{E}_6) \simeq  \mathrm{PSU}_4(\mathbb{F}_2) \rtimes \mathbb{Z}/2\mathbb{Z} \to \mathbb{Z}/2\mathbb{Z}.
$$
\noindent Then the kernel of the homomorphism 
$$
p\mid_{\mathfrak{A}_6}: \mathfrak{A}_6 \to \mathbb{Z}/2\mathbb{Z}
$$
\noindent is the whole group $\mathfrak{A}_6$ since it is simple. This means that this $\mathfrak{A}_6$ lies in $\mathrm{PSU}_4(\mathbb{F}_2).$

Now let us prove that in $\mathrm{PSU}_4(\mathbb{F}_2)$ there is the only subgroup isomorphic to $\mathfrak{A}_6$ up to conjugation.  The subgroup in $\mathrm{PSU}_4(\mathbb{F}_2)$  isomorphic to $\mathfrak{A}_6$ should lie in some maximal subgroup of $\mathrm{PSU}_4(\mathbb{F}_2).$ By Lemma~\ref{lemma:maxsubgroupspsu4} the only maximal subgroup, the order of which is divisible by the order of $\mathfrak{A}_6,$ is isomorphic to $\mathfrak{S}_6.$ So $\mathfrak{A}_6$ has to lie in $\mathfrak{S}_6.$  Moreover, the subgroup in $\mathfrak{S}_6$ isomorphic to $\mathfrak{A}_6$ is unique. Therefore, as $\mathfrak{S}_6$ is unique up to conjugation in $ \mathrm{PSU}_4(\mathbb{F}_2),$ we get that $\mathfrak{A}_6$ is unique in $\mathrm{PSU}_4(\mathbb{F}_2)$ up to conjugation. As a consequence,   there is the only  $\mathfrak{A}_6$ in $W(\mathrm{E}_6)$ up to conjugation.

\end{proof}

\begin{Lemma}\label{lemma:galoispsu4}
The centralizers of the subgroups $\mathrm{PSU}_4(\mathbb{F}_2)$ and 
$$
\left(\mathbb{Z}/2\mathbb{Z}\right)^4 \rtimes \mathfrak{A}_5 \subset \mathrm{PSU}_4(\mathbb{F}_2)
$$
\noindent in the Weyl group $W(\mathrm{E}_6)$ are trivial.
\end{Lemma}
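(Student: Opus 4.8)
The plan is to prove that the centralizers in $W(\mathrm{E}_6)$ of the subgroups $\mathrm{PSU}_4(\mathbb{F}_2)$ and $(\mathbb{Z}/2\mathbb{Z})^4 \rtimes \mathfrak{A}_5$ are trivial. Both of these are large subgroups, so the heuristic is that any element commuting with all of them must be very constrained. I would handle the two cases using the semidirect product structure $W(\mathrm{E}_6) \simeq \mathrm{PSU}_4(\mathbb{F}_2) \rtimes \mathbb{Z}/2\mathbb{Z}$ from Lemma~\ref{lemma:w=psuz/2z}, recalling that this $\mathbb{Z}/2\mathbb{Z}$ acts \emph{non-trivially} on $\mathrm{PSU}_4(\mathbb{F}_2)$.

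For the first centralizer, let $g \in W(\mathrm{E}_6)$ centralize all of $H := \mathrm{PSU}_4(\mathbb{F}_2)$. Write $g = (h, \epsilon)$ with $h \in H$ and $\epsilon \in \mathbb{Z}/2\mathbb{Z}$. First I would observe that the restriction of the centralizer to $H$ itself is the center $Z(H)$: since $\mathrm{PSU}_4(\mathbb{F}_2)$ is a finite simple group (it is isomorphic to the simple group $\mathrm{PSp}_4(\mathbb{F}_3)$, of order $25\,920$), its center is trivial, so an element of $H$ centralizing all of $H$ must be the identity. It then remains to rule out the nontrivial coset, i.e.\ to show no element $g = (h, \tau)$ with $\tau$ the nontrivial involution can centralize all of $H$. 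If $(h,\tau)$ commutes with every $(h',e)$, the semidirect-product multiplication forces $h \cdot \tau(h') \cdot h^{-1} = h'$ for all $h' \in H$, i.e.\ the automorphism $x \mapsto h\, \tau(x)\, h^{-1}$ is the identity, so the outer automorphism $\tau$ would be the inner automorphism given by $h^{-1}$. This contradicts that $\tau$ acts \emph{non-trivially}, in fact as an outer automorphism (the outer automorphism group of $\mathrm{PSU}_4(\mathbb{F}_2)$ is $\mathbb{Z}/2\mathbb{Z}$ and $\tau$ realizes the nontrivial class). Hence $\epsilon = e$ and $g = e$.

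For the second centralizer I would reduce to the first by showing that $(\mathbb{Z}/2\mathbb{Z})^4 \rtimes \mathfrak{A}_5$ already has trivial centralizer inside $H = \mathrm{PSU}_4(\mathbb{F}_2)$, and that no element of the nontrivial coset centralizes it either. The key point is that this subgroup is a \emph{maximal} subgroup of $H$ (Lemma~\ref{lemma:maxsubgroupspsu4}) of order $960$, and in particular it is not contained in any proper normal subgroup; being large and acting with no global fixed structure, its centralizer in $H$ must be trivial — one can see this because the centralizer of the subgroup is a normal-like constraint and any nontrivial centralizing element would commute with the whole maximal subgroup, forcing that subgroup together with the centralizing element to generate a strictly larger subgroup, contradicting maximality unless the centralizer lies in the subgroup itself, where a direct check on the action of $\mathfrak{A}_5$ and the elementary abelian $(\mathbb{Z}/2\mathbb{Z})^4$ shows the simultaneous centralizer is trivial. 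For the coset part, the same argument as above applies: an element $(h,\tau)$ centralizing the subgroup would force $\tau$ to agree with an inner automorphism on that subgroup, and since the subgroup is large enough to detect the outer automorphism class (it contains representatives distinguishing inner from outer automorphisms of $H$), this is again impossible.

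The main obstacle I expect is the coset computation — verifying that the outer involution $\tau$ genuinely fails to agree with any inner automorphism \emph{when restricted to the given subgroup}. For the full group $\mathrm{PSU}_4(\mathbb{F}_2)$ this is immediate from $\tau$ being outer, but for the proper subgroup $(\mathbb{Z}/2\mathbb{Z})^4 \rtimes \mathfrak{A}_5$ one must check that this subgroup is ``large enough'' to see the outer class, i.e.\ that no inner automorphism of $H$ restricts to the same map on the subgroup as $\tau$ does. I would address this either by exhibiting a specific element of the subgroup whose $\tau$-image and $H$-conjugacy class are incompatible, or by using the explicit realization of $W(\mathrm{E}_6)$ acting on the root system $\mathrm{E}_6$ and the eigenvalue data of Lemma~\ref{lemma:conjugacyclassese6} to distinguish the relevant conjugacy classes. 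The trivial-centralizer-in-$H$ part, by contrast, should follow cleanly from maximality and simplicity.
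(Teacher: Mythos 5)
Your proposal has a genuine gap, and it sits exactly where you predicted: the coset step for the subgroup $M=\left(\mathbb{Z}/2\mathbb{Z}\right)^4 \rtimes \mathfrak{A}_5$. You reduce that step to showing that no inner automorphism of $H=\mathrm{PSU}_4(\mathbb{F}_2)$ agrees with the outer involution $\tau$ on $M$, i.e.\ that $M$ ``detects the outer class'' --- but you never verify this; you only list two strategies you \emph{would} try. That verification is the actual mathematical content of the second case, so as written the proof is incomplete. Two smaller problems compound this. First, your maximality argument inside $H$ is garbled: if a centralizing element $c$ lies outside $M$, then $\langle M,c\rangle = H$ by maximality --- this is not itself a contradiction; the correct conclusion is that $c$ centralizes $\langle M,c\rangle = H$, hence $c\in Z(H)=\{e\}$ by simplicity. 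Second, in the first case you need $\tau$ to be an \emph{outer} automorphism, but Lemma~\ref{lemma:w=psuz/2z} only asserts that the action is \emph{non-trivial}; a non-trivial inner action would produce exactly the central element you are trying to exclude. Outerness is true and citable (Atlas: $W(\mathrm{E}_6)=\mathrm{Aut}(\mathrm{PSU}_4(\mathbb{F}_2))$), or it can be derived from the paper's own data: Table~\ref{table:conjugacyclassese6} shows no involution in $W(\mathrm{E}_6)$ has all eigenvalues $-1$, so $-\mathrm{id}\notin W(\mathrm{E}_6)$, and by Schur's lemma applied to the irreducible reflection representation the center of $W(\mathrm{E}_6)$ is trivial, which forces the action to be outer. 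But you supply none of this.

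For comparison, the paper's proof avoids the inner/outer analysis entirely and handles both the subgroup $H$ and both cosets in one stroke. It observes that $C_{W(\mathrm{E}_6)}(H)\subseteq C_{W(\mathrm{E}_6)}(M)$, so it suffices to treat $M$. Since $M$ contains an element $g$ of order $5$, Lemma~\ref{lemma:centralizerconjugacyclassA4} gives $C_{W(\mathrm{E}_6)}(M)\subseteq C_{W(\mathrm{E}_6)}(g)\simeq \mathbb{Z}/10\mathbb{Z}$; the order-$5$ part of this cyclic group is $\langle g\rangle$ itself, and $g$ is not central in $M$, so $C_{W(\mathrm{E}_6)}(M)$ has order at most $2$. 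If it were generated by an involution $h$, then $h$ commutes with $g$, so by Corollary~\ref{lemma:a1commutewitha4} it lies in class $A_1$, whose centralizer has order $1440$ by Lemma~\ref{lemma:conjugacyclassese6}; but $M\subseteq C_{W(\mathrm{E}_6)}(h)$ and $960 \nmid 1440$, a contradiction. Note that this counting argument makes no reference to where a putative centralizing element sits relative to the index-$2$ subgroup, so the coset case you left open never arises. If you want to salvage your route, the cleanest repair is to import exactly this mechanism: any $(h,\tau)$ centralizing $M$ lies in $C_{W(\mathrm{E}_6)}(g)\simeq\mathbb{Z}/10\mathbb{Z}$, and its involution part (itself, or its fifth power via Lemma~\ref{lemma:a1anda4}) lands in class $A_1$ and is killed by the same divisibility count.
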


\begin{proof}

Let us consider the subgroup $G \simeq \left(\mathbb{Z}/2\mathbb{Z}\right)^4 \rtimes \mathfrak{A}_5$ in $\mathrm{PSU}_4(\mathbb{F}_2).$ It is enough to prove that the centralizer of $G$  in $W(\mathrm{E}_6)$ is trivial. There is an element of order~$5$ in~$G.$ Therefore, by Lemma~\ref{lemma:centralizerconjugacyclassA4} the centraliser of $G$ lies in $\mathbb{Z}/10\mathbb{Z}.$ However, the element of order~$5$ does not lie in the center of $G.$ This means that  the centralizer of $G$ lies in~$\mathbb{Z}/2\mathbb{Z}.$ Assume that the centralizer is isomorphic to $\mathbb{Z}/2\mathbb{Z}$ and let $h$ be its generator. By Lemma~\ref{lemma:centralizerconjugacyclassA4} the element $h$ lies in the conjugacy class $A_1.$ By Lemma~\ref{lemma:conjugacyclassese6}  the order of its centralizer is equal to $1440.$ But the order $|G|=960$ does not divide~$1440.$ Therefore, the centralizer of $G$ is trivial. Moreover, from this we get that the centralizer of $\mathrm{PSU}_4(\mathbb{F}_2)$ is also trivial.

\end{proof}

\section{Cubic surfaces}\label{section:cubicsurface}

In this section we introduce the notation related to smooth cubic surfaces and recall the description of their geometry. More details on smooth cubic surfaces can be found in~\cite[Chapter~9]{DolgClass} or~\cite[Chapter \rom{4}]{Manin}.

Let $S \subset \mathbb{P}^3$ be a smooth cubic surface over a field $\mathbf{F}.$  Then $S_{\overline{\mathbf{F}}}$ is a blowup of $6$ points 
$$
P_1, \; P_2, \; P_3, \; P_4, \; P_5, \; P_6
$$
\noindent in general position on $\mathbb{P}^2_{\overline{\mathbf{F}}}.$ In other words, there is a birational morphism
$$
\pi \colon S_{\overline{\mathbf{F}}} \to \mathbb{P}^2_{\overline{\mathbf{F}}}
$$
\noindent which blows down the preimages of points $P_1,\ldots, P_6.$  Denote by
$$
E_1, \; E_2, \; E_3, \; E_4, \; E_5, \; E_6
$$

\noindent the preimages of  $P_1,$ $P_2,$ $P_3,$ $P_4,$~$P_5$ and $P_6,$ respectively. The exceptional curves are lines on $S_{\overline{\mathbf{F}}} \subset \mathbb{P}^3_{\overline{\mathbf{F}}}.$ The self-intersection of any line on a smooth cubic surface is equal to~$-1.$ It is well known that there are exactly~$27$ lines on a smooth cubic surface over an algebraically closed field. We denote the other $21$ lines by $Q_i$ and~\mbox{$L_{ij}=L_{ji}$} for~$i,j \in \{1, \ldots, 6\}$ and $i \neq j.$ Denote by $H$ the pullback of the class of a line on~$ \mathbb{P}^2_{\overline{\mathbf{F}}}.$ Then the $27$ lines on the cubic surface $S_{\overline{\mathbf{F}}}$ are  the following effective divisors
\begin{gather*}
E_i \quad \text{for} \quad i\in \{1,\ldots,6\}; \\
L_{ij} \sim H-E_i-E_j \quad \text{for} \quad i \neq j; \; i,j\in \{1,\ldots,6\}; \\
Q_i \sim 2H+E_i-\sum_{j=1}^6 E_j \quad \text{for} \quad  i\in \{1,\ldots,6\}.
\end{gather*}

\noindent Their intersections are:
\begin{gather*}
E_i^2=Q_i^2=L_{ij}^2=-1 \quad \text{for} \quad  i \neq j;  \\
E_i \cdot E_j=Q_i \cdot Q_j=0 \quad \text{for} \quad  i \neq j;  \\
E_i \cdot Q_j=L_{ij} \cdot E_i=L_{ij} \cdot Q_i=1 \quad \text{for} \quad  i \neq j;  \\
L_{ij} \cdot L_{kl}=1,  \quad \text{where} \;  i,j,k,l \in \{1,\ldots,6\} \; \text{are pairwise distinct;}  \\
L_{ij} \cdot L_{jk}=0, \quad \text{where} \;  i,j,k \in \{1,\ldots,6\} \; \text{are pairwise distinct}. 
\end{gather*}

Let us consider the set 
$$
\Delta=\{\alpha \in \mathrm{Pic}(S_{\overline{\mathbf{F}}}) \mid K_{S_{\overline{\mathbf{F}}}}\cdot \alpha=0 \; \text{and} \; \alpha^2=-2  \}.
$$
\noindent Then this set is the root system $\mathrm{E}_6.$ The Weyl group $W(\mathrm{E}_6)$ acts on the Picard group~$\mathrm{Pic}(S_{\overline{\mathbf{F}}})$ by orthogonal transformations with respect to the intersection form.  This action restricts to the configuration of lines on $S_{\overline{\mathbf{F}}}.$ It fixes $ K_{S_{\overline{\mathbf{F}}}}$ and preserves the cone of effective divisors. 

Recall the following important theorem.

\begin{Th}[{see, for instance,~\cite[Corollary 8.2.40]{DolgClass}}]\label{th:autinw}
The automorphism group of a smooth cubic surface $S$ is isomorphic to a subgroup in $W(\mathrm{E}_6).$
\end{Th}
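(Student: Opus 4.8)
The plan is to realize $\mathrm{Aut}(S)$ concretely as a group acting on the Picard lattice and then show this action is faithful. First I would pass to the algebraic closure: an $\mathbf{F}$-automorphism $\phi$ of $S$ base-changes to an automorphism $\phi_{\overline{\mathbf{F}}}$ of $S_{\overline{\mathbf{F}}}$, and this assignment is injective because base change along the faithfully flat map $\mathrm{Spec}(\overline{\mathbf{F}})\to\mathrm{Spec}(\mathbf{F})$ separates morphisms. Pulling back divisor classes then yields a group homomorphism
\[
\mathrm{Aut}(S) \longrightarrow \mathrm{Aut}(\mathrm{Pic}(S_{\overline{\mathbf{F}}})), \qquad \phi \longmapsto (\phi_{\overline{\mathbf{F}}})^*.
\]
Since an automorphism induces an isometry of the Picard lattice for the intersection form, fixes the canonical class $K_{S_{\overline{\mathbf{F}}}}$, and carries effective classes to effective classes (so in particular permutes the twenty-seven lines), its image lies in the group of orthogonal transformations that fix $K_{S_{\overline{\mathbf{F}}}}$ and preserve the cone of effective divisors. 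By the description of $W(\mathrm{E}_6)$ recalled above, this places the image inside $W(\mathrm{E}_6)$. It then remains only to prove injectivity, which is the heart of the matter.

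For injectivity I would analyze the kernel, i.e.\ automorphisms acting trivially on $\mathrm{Pic}(S_{\overline{\mathbf{F}}})$. Such a $\phi$ fixes the class of each exceptional curve $E_i$; since a line is the unique effective divisor in its class, $\phi$ must send each $E_i$ to itself. Hence $\phi$ preserves the exceptional locus of the blow-down $\pi\colon S_{\overline{\mathbf{F}}} \to \mathbb{P}^2_{\overline{\mathbf{F}}}$ and descends to an automorphism $\bar\phi$ of $\mathbb{P}^2_{\overline{\mathbf{F}}}$ with $\pi \circ \phi = \bar\phi \circ \pi$. Because $\phi$ fixes each $E_i$ individually rather than merely their union, $\bar\phi$ fixes each of the six points $P_i = \pi(E_i)$.

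Now I would invoke the general position of $P_1,\dots,P_6$: no three are collinear, so there are four points in general position among them, and an element of $\mathrm{Aut}(\mathbb{P}^2_{\overline{\mathbf{F}}})=\mathrm{PGL}_3(\overline{\mathbf{F}})$ fixing four points in general position is the identity. Thus $\bar\phi=\mathrm{id}$, giving $\pi\circ\phi=\pi$; since $\pi$ restricts to an isomorphism on the dense open complement of $E_1\cup\dots\cup E_6$, this forces $\phi=\mathrm{id}$ there, hence $\phi=\mathrm{id}$ on $S_{\overline{\mathbf{F}}}$ and therefore on $S$. This establishes injectivity and exhibits $\mathrm{Aut}(S)$ as a subgroup of $W(\mathrm{E}_6)$. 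The main obstacle is precisely this injectivity step; the rest is formal, and the decisive geometric inputs are the rigidity of the exceptional lines (uniqueness of the effective representative in each line class) together with the general-position constraint on $P_1,\dots,P_6$, which is exactly what forbids a nontrivial kernel.
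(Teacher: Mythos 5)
Your proof is correct and is essentially the standard argument behind the result the paper cites (it gives no proof of its own, referring to~\cite[Corollary 8.2.40]{DolgClass} and noting the same proof works in positive characteristic): faithfulness of the $\mathrm{Aut}(S)$-action on $\mathrm{Pic}(S_{\overline{\mathbf{F}}})$ via uniqueness of the effective representative of each line class, descent to $\mathbb{P}^2_{\overline{\mathbf{F}}}$, and rigidity of $\mathrm{PGL}_3$ fixing four points in general position. Nothing further is needed.
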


Although in the literature Theorem~\ref{th:autinw} is proved over fields of characteristic zero, exactly the same proof also works in the case of positive characteristic.

Using the classification of automorphism groups of smooth cubic surfaces over an algebraically closed field  we can say more about their orders. 

\begin{Lemma}\label{lemma:allautgroupinpsu}
Let $S$ be a smooth cubic surface. Then $\mathrm{Aut}(S) \leqslant 25\,920.$
\end{Lemma}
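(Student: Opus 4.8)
The plan is to reduce immediately to the case of an algebraically closed field and then invoke the known classification. First I would observe that base change along $\mathbf{F} \subset \overline{\mathbf{F}}$ gives an injective homomorphism $\mathrm{Aut}(S) \hookrightarrow \mathrm{Aut}(S_{\overline{\mathbf{F}}})$: an $\mathbf{F}$-automorphism of $S$ induces an $\overline{\mathbf{F}}$-automorphism of $S_{\overline{\mathbf{F}}}$, and this assignment is injective because an automorphism that becomes the identity after base change was already the identity over $\mathbf{F}$. Hence it suffices to bound $|\mathrm{Aut}(S_{\overline{\mathbf{F}}})|$, and in particular one may assume $\mathbf{F}=\overline{\mathbf{F}}$.

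Over the algebraic closure, Theorem~\ref{th:autinw} embeds $\mathrm{Aut}(S_{\overline{\mathbf{F}}})$ into $W(\mathrm{E}_6)$, and by Lemma~\ref{lemma:w=psuz/2z} we have $W(\mathrm{E}_6) \simeq \mathrm{PSU}_4(\mathbb{F}_2) \rtimes \mathbb{Z}/2\mathbb{Z}$, of order $51\,840$. This alone only yields the weaker bound $|\mathrm{Aut}(S)| \leqslant 51\,840$; to sharpen it to $25\,920=|\mathrm{PSU}_4(\mathbb{F}_2)|$ one has to show that the full Weyl group is never realized as the automorphism group of a smooth cubic surface. For this I would appeal to the classification of automorphism groups of smooth cubic surfaces over an algebraically closed field: in characteristic $2$ this is Theorem~\ref{Th:DolgachevDuncan}, which already states $|\mathrm{Aut}(S_{\overline{\mathbf{F}}})| \leqslant 25\,920$ with equality only for the Fermat cubic, and in the remaining characteristics the corresponding results of Hosoh and of Dolgachev--Duncan give a strictly smaller maximal order. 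Combining this with the injection of the first step gives $|\mathrm{Aut}(S)| \leqslant 25\,920$.

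The only genuinely delicate point is the gap between $51\,840$ and $25\,920$: since $\mathrm{PSU}_4(\mathbb{F}_2)$ is simple, it is the unique subgroup of index~$2$ in $W(\mathrm{E}_6)$, so a priori an automorphism group of order exceeding $25\,920$ would have to equal all of $W(\mathrm{E}_6)$. Ruling this out is exactly the content carried by the classification, which is why I would phrase the argument so that the whole force of the bound rests on Theorem~\ref{Th:DolgachevDuncan} (together with its positive- and zero-characteristic analogues), rather than trying to extract the sharper bound from the group theory of $W(\mathrm{E}_6)$ alone.
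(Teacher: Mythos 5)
Your proposal is correct and takes essentially the same route as the paper, whose entire proof of this lemma is to cite the classification of automorphism groups of smooth cubic surfaces over an algebraically closed field (Hosoh in characteristic zero, Dolgachev--Duncan in positive characteristic), with the reduction via the injection $\mathrm{Aut}(S)\hookrightarrow\mathrm{Aut}\bigl(S_{\overline{\mathbf{F}}}\bigr)$ left implicit. Your supplementary remarks on $W(\mathrm{E}_6)$, its order $51\,840$, and $\mathrm{PSU}_4(\mathbb{F}_2)$ being its unique index-$2$ subgroup are accurate but, as you yourself note, the sharp bound rests on the classification rather than on the group theory of $W(\mathrm{E}_6)$.
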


\begin{proof}
This directly follows from the classification of the automorphism groups which can be found in~\cite[Theorem~5.3]{Hosoh} in zero characteristic and  in~\cite[Theorem~1.1]{DolgachevDuncan} for positive characteristic.

\end{proof}

Let us go back to the conjugacy classes of elements in $W(\mathrm{E}_6)$ in the light of the action on the Picard group on a cubic surface.

\begin{Lemma}[{\cite[Table 7.1, column 8]{Banwait}}]\label{lemma:linesconjugacyclasses}
Let $S$ be a smooth cubic surface over an algebraically closed field. Consider an element $g \in  W(\mathrm{E}_6)$ of order~$2$ acting on the configuration of lines on $S.$ The following assertions hold.

\begin{enumerate}
\renewcommand\labelenumi{\rm (\roman{enumi})}
\renewcommand\theenumi{\rm (\roman{enumi})}

\item\label{it1} Assume that $g$ lies in the conjugacy class $A_1.$  Then there are $15$  lines  on $S$ which are invariant under the action of $g.$ Moreover, under the action of $g$ there are $6$ invariant pairs of skew lines on $S,$  and in any pair the lines are interchanged by $g.$ 

\item\label{it2} Assume that $g$ lies in the conjugacy class~$A_1^2.$   Then  there are $7$ lines on~$S$ which are invariant under the action of $g.$ Under the action of $g$ there are $8$ invariant pairs of skew lines on~$S,$   and in any pair the lines are interchanged by $g.$  Moreover, under the action of $g$ there are $2$ invariant pairs of intersecting lines on~$S,$  and  in any pair the lines are interchanged by $g.$ 

\item\label{it3} Assume that $g$ lies in the conjugacy class $A_1^3.$ Then  there are~$3$  lines which are invariant under the action of $g.$ Under the action of $g$ there are  $6$ invariant pairs of skew lines on $S,$ and  in any pair the lines are interchanged by $g.$   Moreover, under the action of $g$ there are $6$ invariant pairs of intersecting lines on $S,$ and  in any pair the lines are interchanged by $g.$ 

\item\label{it4} Assume that $g$ lies in the conjugacy class $A_1^4.$  Then  there are~$3$  lines which are invariant under the action of $g.$ Moreover, under the action of $g$ there are   $12$ invariant pairs of intersecting lines on $S,$   and  in any pair the lines are interchanged by $g.$

\end{enumerate}
\end{Lemma}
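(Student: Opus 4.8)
The plan is to exploit the fact that all four quantities to be counted---the number of invariant lines, the number of swapped skew pairs, and the number of swapped intersecting pairs---are invariant under conjugation in $W(\mathrm{E}_6)$. Indeed, if $g' = w g w^{-1}$, then $w$ permutes the $27$ lines preserving the intersection form, so it carries $g$-invariant lines to $g'$-invariant lines and $g$-swapped pairs to $g'$-swapped pairs of the same intersection type. Hence it suffices to pick one convenient representative of each of the classes $A_1$, $A_1^2$, $A_1^3$, $A_1^4$ and compute the action on the $27$ classes $E_i$, $L_{ij}$, $Q_i$ explicitly. Throughout I would use that for a root $\alpha \in \Delta$ (so $\alpha^2 = -2$) the associated reflection acts on $D \in \mathrm{Pic}(S_{\overline{\mathbf{F}}})$ by $s_\alpha(D) = D + (D \cdot \alpha)\,\alpha$, and that a product of $r$ mutually orthogonal reflections is an involution with eigenvalues $-1$ of multiplicity $r$ and $+1$ of multiplicity $6-r$ on the root lattice, i.e.\ an element of the class $A_1^r$ by Table~\ref{table:conjugacyclassese6}.

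For the first three classes I would take the roots from the subsystem spanned by the differences $E_i - E_j$. Namely, choose
$$
g = s_{E_1 - E_2}, \quad s_{E_1-E_2}s_{E_3-E_4}, \quad s_{E_1-E_2}s_{E_3-E_4}s_{E_5-E_6}
$$
as representatives of $A_1$, $A_1^2$, $A_1^3$ respectively. Since $s_{E_i - E_j}$ simply transposes the indices $i$ and $j$, these elements act on the $27$ lines through the permutations $(12)$, $(12)(34)$, $(12)(34)(56)$ of the index set $\{1,\dots,6\}$, and the action on each line is read off directly from the expressions $L_{ij} \sim H - E_i - E_j$ and $Q_i \sim 2H + E_i - \sum_j E_j$. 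Counting fixed classes and orbits of size two, and then deciding skewness of each swapped pair from the stated intersection numbers (e.g.\ $L_{ij}\cdot L_{jk}=0$ for a shared index but $L_{ij}\cdot L_{kl}=1$ for disjoint indices), yields the numbers in assertions~\ref{it1}, \ref{it2} and~\ref{it3}.

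The only class requiring a root outside this permutation subgroup is $A_1^4$, and this is the step I expect to be the main obstacle: on six elements one cannot realise four disjoint transpositions, so no element acting purely by index permutations lies in $A_1^4$. The key observation is that $\alpha_0 = 2H - \sum_{i=1}^6 E_i$ is a root orthogonal to every $E_i - E_j$, and that its reflection satisfies $s_{\alpha_0}(E_i) = Q_i$, $s_{\alpha_0}(Q_i)=E_i$ and $s_{\alpha_0}(L_{ij}) = L_{ij}$, as one checks from the intersection numbers above (for instance $E_i \cdot \alpha_0 = -E_i^2 = 1$, whence $s_{\alpha_0}(E_i)=E_i+\alpha_0=Q_i$). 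Hence
$$
g = s_{E_1-E_2}\,s_{E_3-E_4}\,s_{E_5-E_6}\,s_{\alpha_0}
$$
is a product of four mutually orthogonal reflections, so it lies in $A_1^4$, and it acts by $E_i \mapsto Q_{\pi(i)}$, $Q_i \mapsto E_{\pi(i)}$, $L_{ij}\mapsto L_{\pi(i)\pi(j)}$ with $\pi = (12)(34)(56)$. Reading off the orbits gives the three fixed lines $L_{12}, L_{34}, L_{56}$, six swapped pairs $\{E_i, Q_{\pi(i)}\}$ and six swapped pairs $\{L_{ij}, L_{\pi(i)\pi(j)}\}$; since $E_i \cdot Q_j = 1$ for $i\neq j$ and $L_{ij}\cdot L_{kl}=1$ for disjoint indices, all twelve swapped pairs consist of intersecting lines and none are skew, which is exactly assertion~\ref{it4}. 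The remaining work is the routine but lengthy bookkeeping of these orbits and intersection numbers for each of the four representatives.
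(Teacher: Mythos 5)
Your proposal is correct, but it takes a genuinely different route from the paper: the paper gives no proof at all, simply citing the table of Banwait--Fit\'e--Loughran (\cite[Table 7.1, column 8]{Banwait}), whereas you supply a self-contained verification. Your two reductions are both sound: all four counts are class functions since $W(\mathrm{E}_6)$ permutes the $27$ $(-1)$-classes preserving the intersection form, and an involution that is a product of $r$ mutually orthogonal reflections has eigenvalue $-1$ with multiplicity exactly $r$, which by the distinct eigenvalue collections in Table~\ref{table:conjugacyclassese6} pins down the class $A_1^r$. I checked your representatives: for $(12)$, $(12)(34)$, $(12)(34)(56)$ the fixed-line counts are $4+4+1+6=15$, then $2+2+3=7$, then $3$, and the swapped pairs sort by intersection number exactly as claimed (pairs $\{E_i,E_j\}$, $\{Q_i,Q_j\}$ and $\{L_{ij},L_{kj}\}$ with a shared index are skew; $\{L_{ij},L_{kl}\}$ with disjoint indices intersect), giving $6$ skew; $8$ skew and $2$ intersecting; $6$ skew and $6$ intersecting, respectively. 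Your handling of the one genuinely nontrivial case $A_1^4$ is the right idea and is computed correctly: $\alpha_0=2H-\sum_i E_i$ is a root orthogonal to every $E_i-E_j$, one has $E_i\cdot\alpha_0=1$, $Q_i\cdot\alpha_0=-1$, $L_{ij}\cdot\alpha_0=0$, so $s_{\alpha_0}$ swaps $E_i\leftrightarrow Q_i$ and fixes the $L_{ij}$, and the composite with $(12)(34)(56)$ fixes only $L_{12},L_{34},L_{56}$ and swaps six pairs $\{E_i,Q_{\pi(i)}\}$ and six pairs $\{L_{ij},L_{\pi(i)\pi(j)}\}$, all intersecting since $E_i\cdot Q_j=1$ for $i\neq j$ and the $L$-pairs have disjoint index sets. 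What the citation buys the paper is brevity; what your computation buys is independence from the reference and an explicit dictionary between Carter's labels and line configurations, at the cost of the bookkeeping you acknowledge. The only point worth making explicit in a final write-up is the observation (implicit in your appeal to the table) that the four involution classes of $W(\mathrm{E}_6)$ are distinguished by their $-1$-eigenvalue multiplicities, so that your orthogonal-reflection representatives really do exhaust and represent $A_1,\dots,A_1^4$.
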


Now let us study the blowdown of a smooth cubic surface to the quadric in the light of automorphism groups.  From now on we use the following definition.

\begin{definition}\label{def:quadraticposit}
Let $S$ be a smooth cubic surface over a field $\mathbf{F}.$ We say that~$5$ skew lines $l_1, \ldots, l_5$ on $S$ are \textit{in quadratic position} if blowing them down over $\overline{\mathbf{F}}$ gives $\mathbb{P}^1_{\overline{\mathbf{F}}} \times \mathbb{P}^1_{\overline{\mathbf{F}}}.$
\end{definition}

We emphasize that in Definition~\ref{def:quadraticposit} it is required that each of the lines $l_1, \ldots, l_5$ is defined over $\mathbf{F}.$

\begin{Remark}\label{remark:blowdowngeneralposition}
By~\cite[Chapter~4, Proposition~2.7 and Exercise~2.9]{Huybrechts} the blowup of points in general position on a quadric surface~\mbox{$\mathbb{P}^1_{\overline{\mathbf{F}}} \times \mathbb{P}^1_{\overline{\mathbf{F}}}$} gives a smooth cubic surface. In other words, the preimages of points in general position on a quadric surface are the lines in quadratic position. Conversely, the image of the blowdown of lines in quadratic position on a smooth cubic surface is a set of points in general position on  $\mathbb{P}^1_{\overline{\mathbf{F}}} \times \mathbb{P}^1_{\overline{\mathbf{F}}}.$

\end{Remark}

Let $S$ be a smooth cubic surface over a field $\mathbf{F}.$ By~\cite[Theorem~23.8(\romsmall{2})]{Manin} the Galois group~$\mathrm{Gal}(\overline{\mathbf{F}}/\mathbf{F})$ acts on $\mathrm{Pic}(S_{\overline{\mathbf{F}}})$ and preserves the canonical class of $S$ and the intersection form. This means that there is a homomorphism 
$$
\mathrm{Gal}(\overline{\mathbf{F}}/\mathbf{F}) \to \mathrm{Aut}(\mathrm{Pic}(S_{\overline{\mathbf{F}}})) \subset W(\mathrm{E}_6).
$$
\noindent Let $\Gamma$ be the image of the Galois group~$\mathrm{Gal}(\overline{\mathbf{F}}/\mathbf{F})$ in  $W(\mathrm{E}_6).$ We remind the reader that  $\Gamma$ lies in the centralizer of  $\mathrm{Aut}(S),$  since any $\phi \in \mathrm{Aut}(S)$ is defined over $\mathbf{F}.$

Now we give auxiliary lemmas about the structures of conic bundle on del Pezzo surfaces.

\begin{Lemma}\label{lemma:conicbundleoncubic}
Let $S$ be a smooth cubic surface over an arbitrary field $\mathbf{F}.$ Assume that there is a line $L$ on $S_{\overline{\mathbf{F}}}$ which is defined over $\mathbf{F}.$ Then there is a structure of conic bundle over $\mathbb{P}^1$ 
$$
b \colon S \to \mathbb{P}^1
$$
\noindent on $S$ which is given by the pencil of planes on $\mathbb{P}^3_{\overline{\mathbf{F}}}$ passing through $L.$ This conic bundle has $5$ degenerate fibres over $\overline{\mathbf{F}}.$ Any degenerate fibre over $\overline{\mathbf{F}}$ is a union of two intersecting lines on $S_{\overline{\mathbf{F}}}.$

\end{Lemma}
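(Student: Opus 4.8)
The plan is to realise the conic bundle as the morphism attached to the \emph{residual} linear system of the pencil of planes through $L$, and then to read off the number and the shape of the degenerate fibres from intersection theory on $S_{\overline{\mathbf{F}}}$.

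First I would set up the pencil. Since $L$ is defined over $\mathbf{F}$, the planes $H \subset \mathbb{P}^3$ containing $L$ form a pencil parametrised by a $\mathbb{P}^1$ over $\mathbf{F}$ (a line in the dual space $(\mathbb{P}^3)^\vee$). For each such $H$ the plane section $H\cap S$ is a plane cubic containing $L$, so it splits as $H\cap S = L + C_H$, where the residual curve $C_H$ is a conic in $H$. Writing $-K_S$ for the hyperplane class (so $H\cap S \sim -K_S$), the conics $C_H$ move in the linear system $|{-K_S-L}|$, which is defined over $\mathbf{F}$ because $-K_S$ and $L$ are. Setting $F=-K_S-L$ and using $(-K_S)^2=3$, $(-K_S)\cdot L=1$ (as $L$ is a line of a hyperplane section) and $L^2=-1$, I compute
$$
F^2=(-K_S)^2-2(-K_S)\cdot L+L^2=3-2+(-1)=0, \qquad (-K_S)\cdot F=3-1=2.
$$
Thus $F$ is nef with $F^2=0$, the pencil $|F|$ has no fixed component, and checking that it is moreover base-point free shows that it defines a morphism $b\colon S\to\mathbb{P}^1$ over $\mathbf{F}$ whose fibres are the $C_H$. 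Since $(-K_S)\cdot F=2$, every fibre is an anticanonical conic, so $b$ is a conic bundle; note $F\cdot L=2$, so $L$ is a bisection rather than a section.

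Next I would count and describe the degenerate fibres over $\overline{\mathbf{F}}$. A relatively minimal conic bundle $S\to\mathbb{P}^1$ satisfies $K_S^2=8-r$, where $r$ is the number of reducible fibres; as $K_S^2=3$ this gives $r=5$ (equivalently, comparing Euler characteristics, with a smooth fibre contributing $2$ and a reducible fibre contributing $3$, against $\chi(S_{\overline{\mathbf{F}}})=9$, yields $4+r=9$). For the shape, write a reducible fibre as $F=F_1+F_2$ with $F_i$ effective; since $-K_S$ is ample and $(-K_S)\cdot F=2$ we get $(-K_S)\cdot F_i=1$, so each $F_i$ is a line with $F_i^2=-1$ by adjunction. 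Then $0=F^2=F_1^2+2F_1\cdot F_2+F_2^2$ forces $F_1\cdot F_2=1$, so the two lines meet transversally at one point; a double line is excluded because $(2F_1)^2=-4\neq0$. This is exactly the claimed description. As a sanity check one may take $L=E_6$ over $\overline{\mathbf{F}}$: then $F=3H-E_1-\cdots-E_5-2E_6$ and the five reducible fibres are precisely $L_{i6}+Q_i$ for $i=1,\ldots,5$, with $L_{i6}\cdot Q_i=1$.

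The main obstacle is verifying that $|{-K_S-L}|$ is genuinely base-point free, so that $b$ is a morphism and not merely a rational map: the naive projection from $L$ is undefined along $L$, and one must check that the residual system has no base point on $L$ (the two points of $L\cap C_H$ sweep out $L$ as $H$ varies). I would handle this either via the explicit blow-down description above or via the general fact that on a del Pezzo surface a nef class $F$ with $F^2=0$ and $(-K_S)\cdot F=2$ is base-point free. A secondary point is to keep the count valid in characteristic $2$; the identity $K_S^2=8-r$ and the classification of degenerations of plane conics hold over any algebraically closed field, so no essential change is needed.
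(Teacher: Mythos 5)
Your proposal is correct and follows essentially the same route as the paper: the pencil of planes through $L$ (equivalently the residual system $|{-K_S-L}|$) gives the conic bundle, and the count of five degenerate fibres comes from an Euler characteristic computation (your $K_S^2=8-r$ being a numerical equivalent). The paper's own proof is much terser --- it asserts that degenerate fibres are pairs of intersecting lines ``by the very definition'' and gets the base $\mathbb{P}^1$ from a rational point of $L$ --- whereas you verify these facts (two components, transversal intersection, exclusion of double lines, base-point freeness) by intersection theory; this is a difference of detail and rigor, not of method.
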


\begin{proof}
This fact seems well-known, but for the sake of completeness let us give a proof of this fact. Let us consider the pencil of planes on $\mathbb{P}^3_{\overline{\mathbf{F}}}$ passing through $L.$ This pencil gives us a structure of conic bundle on $S$ over $\mathbf{F}.$ The base of this conic bundle is $\mathbb{P}^1,$ because $L$ is defined over $\mathbf{F},$ so there is a point $p \in L$ which is defined over $\mathbf{F}.$ So the point $b(p)$ is also defined over $\mathbf{F}.$

 By the very definition, any degenerate fibre over $\overline{\mathbf{F}}$ is a union of two intersecting lines on $S_{\overline{\mathbf{F}}}.$ So by computing the Euler characteristic of conic bundle we can see that there are exactly $5$ geometric degenerate fibres.

\end{proof}

\begin{Lemma}\label{lemma:uniqueconicbundlef1}
Let $\mathcal{F}$ be a blowup of $\mathbb{P}^2$ in a point. Then there is a unique structure of conic bundle on $\mathcal{F}.$
\end{Lemma}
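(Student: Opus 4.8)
The plan is to reduce everything to a short computation in $\mathrm{Pic}(\mathcal{F})$. Since $\mathcal{F}$ is the blowup of $\mathbb{P}^2$ at one point, it is the Hirzebruch surface $\mathbb{F}_1$, and I would fix the basis of $\mathrm{Pic}(\mathcal{F}) = \mathbb{Z}H \oplus \mathbb{Z}E$ given by the pullback $H$ of the class of a line on $\mathbb{P}^2$ and the exceptional $(-1)$-curve $E$, with intersection form $H^2 = 1$, $H \cdot E = 0$, $E^2 = -1$ and canonical class $K_{\mathcal{F}} = -3H + E$. Because $\mathcal{F}$ is rational, any conic bundle $b \colon \mathcal{F} \to B$ has rational base, so $B \simeq \mathbb{P}^1$, and it suffices to pin down the class of a fibre.

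Next I would record the two numerical constraints on the fibre class $F$. As the fibres form a base-point-free pencil, $F^2 = 0$; and since a fibre is a conic, the adjunction formula $2p_a(F) - 2 = F^2 + K_{\mathcal{F}} \cdot F$ with $p_a(F) = 0$ forces $K_{\mathcal{F}} \cdot F = -2$. Writing $F = aH + bE$, these become
$$
a^2 - b^2 = 0, \qquad -3a - b = -2.
$$
The first equation gives $b = \pm a$; substituting into the second, $b = a$ yields $4a = 2$ (no integer solution), while $b = -a$ yields $a = 1$, $b = -1$. Hence $F = H - E$ is the only possibility, and this class is precisely the fibre class of the ruling of $\mathbb{F}_1$ (the strict transforms of the lines through the blown-up point), so a conic bundle structure exists.

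Finally I would upgrade the uniqueness of the fibre class to uniqueness of the fibration. The class $F = H - E$ is nef with $F^2 = 0$, and a Riemann--Roch computation gives $h^0(\mathcal{F}, \mathcal{O}(F)) = 2$, so the complete linear system $|F|$ is a base-point-free pencil whose associated morphism to $\mathbb{P}^1$ is unique up to the action of $\mathrm{Aut}(\mathbb{P}^1)$, which does not alter the fibration. Any conic bundle structure on $\mathcal{F}$ has fibre class $F = H - E$ and therefore coincides with this one. The only point requiring care is the verification that a conic bundle fibre class must satisfy both numerical constraints and that the fibre class determines the morphism; once this is in place, the Diophantine computation closes the argument.
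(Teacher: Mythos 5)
Your proof is correct, but it is packaged differently from the paper's, whose entire argument is the single sentence that the claim ``immediately follows from the structure of the Mori cone of $\mathcal{F}$.'' Spelled out, the paper's route is Mori-theoretic: the cone $\overline{\mathrm{NE}}(\mathcal{F})$ has exactly two extremal rays, spanned by the $(-1)$-curve $E$ and the ruling class $H-E$; a conic bundle is a $K$-negative extremal contraction of fibre type, the ray of $E$ gives the birational contraction to $\mathbb{P}^2$, so the ruling is the only fibre-type contraction. Your argument reaches the same conclusion by elementary intersection theory instead: the constraints $F^2=0$ and $K_{\mathcal{F}}\cdot F=-2$ (disjointness of fibres, plus adjunction applied to a smooth rational fibre) pin down $F=H-E$ as the unique integral solution, and the computation $h^0(\mathcal{O}_{\mathcal{F}}(F))=2$ shows the pencil of fibres of any conic bundle must be the complete linear system $|F|$, so the morphism is unique up to $\mathrm{Aut}(\mathbb{P}^1)$ — a step the paper's one-liner silently absorbs into the contraction theorem, and which you rightly flag as the point requiring care. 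What your version buys is self-containedness: it uses no cone or contraction theorem and works verbatim over any field (one small refinement: over a non-closed field the base of a conic bundle is a priori only a conic, but it is $\mathbb{P}^1$ here because $\mathcal{F}$ has rational points, e.g.\ on $E\simeq\mathbb{P}^1$, whose images give points of the base — and in the paper's application the base is $\mathbb{P}^1$ by construction anyway). What the paper's version buys is brevity and the conceptual dictionary between conic bundle structures and fibre-type extremal rays.
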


\begin{proof}
This immediately follows from the structure of Mori cone of $\mathcal{F}.$

\end{proof}

Now we begin studying actions of order $5$ and $10$ on smooth cubic surfaces.

\begin{Lemma}\label{lemma:order10}
Let $S$ be a smooth cubic surface over a field $\mathbf{F}$ such that all $27$ exceptional lines are defined over $\mathbf{F}.$ Let $\delta \in W(\mathrm{E}_6)$ be an element of order $10$ acting on the configuration of lines on $S.$ Then
\begin{enumerate}
\renewcommand\labelenumi{\rm (\roman{enumi})}
\renewcommand\theenumi{\rm (\roman{enumi})}

\item\label{lemma:order10length1} there are no lines on $S$ which are invariant under the action of $\delta;$

\item\label{lemma:order10length2}  there is a pair of  lines on $S$ which is invariant under the action of $\delta,$ and, moreover, this pair consists of skew lines.

\end{enumerate}

\end{Lemma}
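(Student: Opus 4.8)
The plan is to exploit the internal direct-product decomposition $\langle\delta\rangle\cong\langle\delta^5\rangle\times\langle\delta^2\rangle$ of the cyclic group of order $10$: a line (respectively, a pair of lines) is $\delta$-invariant if and only if it is invariant under \emph{both} the involution $\delta^5$ and the order-$5$ element $\delta^2$. First I would record the conjugacy data. Since $\delta$ has order $10$, by Lemma~\ref{lemma:conjugacyclassese6} it lies in the unique class $A_4\times A_1$, with eigenvalues $\xi,\xi^2,\xi^3,\xi^4,-1,1$; by Lemma~\ref{lemma:a1anda4} the involution $\delta^5$ lies in the class $A_1$; and squaring the eigenvalues shows $\delta^2$ has eigenvalues $\xi,\xi^2,\xi^3,\xi^4,1,1$, so $\delta^2$ is an order-$5$ element, necessarily in the unique order-$5$ class $A_4$ of Lemma~\ref{lemma:conjugacyclassese6}.

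The key quantitative input is that an element of class $A_4$ fixes exactly $2$ of the $27$ lines. As all order-$5$ elements of $W(\mathrm{E}_6)$ are conjugate, it suffices to verify this on one convenient representative, namely a $5$-cycle in the subgroup $\mathfrak{S}_6\subset W(\mathrm{E}_6)$ that permutes $E_1,\dots,E_6$ and fixes $H$. Taking the cycle $(1\,2\,3\,4\,5)$, one reads off directly from the action $E_i\mapsto E_{\sigma(i)}$, $L_{ij}\mapsto L_{\sigma(i)\sigma(j)}$, $Q_i\mapsto Q_{\sigma(i)}$ on the $27$ lines that the only invariant lines are $E_6$ and $Q_6$; all remaining lines fall into orbits of size $5$. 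Hence every class-$A_4$ element, and in particular $\delta^2$, fixes precisely $2$ lines.

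For assertion~\ref{lemma:order10length1}, I would combine two constraints. Any $\delta$-invariant line is fixed by $\delta^2$, so there are at most $2$ of them. On the other hand, any $\delta$-invariant line is fixed by $\delta^5$, hence lies among the $15$ lines invariant under the $A_1$-involution recorded in Lemma~\ref{lemma:linesconjugacyclasses}\ref{it1}. Since $\delta$ commutes with $\delta^5$, it permutes these $15$ lines, and because $\delta^5$ acts trivially on them this action factors through $\langle\delta\rangle/\langle\delta^5\rangle\cong\mathbb{Z}/5\mathbb{Z}$; therefore the number of $\delta$-fixed lines is $\equiv 15\equiv 0\pmod 5$. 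A nonnegative integer that is at most $2$ and divisible by $5$ equals $0$, proving~\ref{lemma:order10length1}.

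For assertion~\ref{lemma:order10length2}, I would run the analogous orbit argument on pairs. By Lemma~\ref{lemma:linesconjugacyclasses}\ref{it1} the involution $\delta^5$ has exactly $6$ invariant pairs of skew lines. The element $\delta$ permutes this set of $6$ pairs, and $\delta^5$ fixes each pair, so $\delta$ acts on the $6$ pairs through $\mathbb{Z}/5\mathbb{Z}$; as $6\equiv 1\pmod 5$ there is at least one $\delta$-invariant pair. Being one of the six, it consists of skew lines, and $\delta$ must interchange its two lines—otherwise both would be $\delta$-fixed, hence $\delta^5$-fixed, contradicting that $\delta^5$ swaps them—yielding the desired $\delta$-invariant pair of skew lines. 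The only step with any content is the count "$A_4$ fixes exactly two lines", but the explicit $5$-cycle reduces it to an immediate inspection; everything else is modular orbit-counting organized around the splitting $\mathbb{Z}/10\mathbb{Z}\cong\mathbb{Z}/2\mathbb{Z}\times\mathbb{Z}/5\mathbb{Z}$ together with the data of Lemma~\ref{lemma:linesconjugacyclasses}\ref{it1}.
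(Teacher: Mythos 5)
Your proof is correct, and it takes a genuinely different route from the paper's. The paper argues geometrically: for~\ref{lemma:order10length1} it blows down a hypothetical $\delta$-invariant line to a del Pezzo surface of degree $4$ and uses that the symmetries of its line configuration lie in $W(\mathrm{D}_5)\simeq(\mathbb{Z}/2\mathbb{Z})^4\rtimes\mathfrak{S}_5$, a group with no element of order $10$; for~\ref{lemma:order10length2} it notes that orbit lengths divide $10$, rules out length $1$ by~\ref{lemma:order10length1} (so a length-$2$ orbit exists by counting the $27$ lines), and excludes an intersecting pair because the unique third line in the tritangent plane through two intersecting lines would then be $\delta$-invariant, again contradicting~\ref{lemma:order10length1}. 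You instead stay entirely inside $W(\mathrm{E}_6)$, splitting $\langle\delta\rangle\simeq\mathbb{Z}/2\mathbb{Z}\times\mathbb{Z}/5\mathbb{Z}$ and combining two inputs: the count that an $A_4$-element fixes exactly $2$ lines --- your verification on the $5$-cycle is right (the fixed lines are $E_6$ and $Q_6$), and the reduction to that representative is legitimate since Table~\ref{table:conjugacyclassese6} shows a single order-$5$ class --- and the $A_1$-data of Lemma~\ref{lemma:linesconjugacyclasses}\ref{it1} together with mod-$5$ orbit counting ($15\equiv 0$ and $6\equiv 1 \pmod 5$). The two facts your counting needs, namely that $\delta$ permutes the $15$ fixed lines and the $6$ swapped skew pairs of $\delta^5$ and that its action on them factors through $\mathbb{Z}/5\mathbb{Z}$, do hold, because $\delta$ commutes with $\delta^5$ and preserves the intersection form (and, reassuringly, $15+2\cdot 6=27$ accounts for all lines). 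As for what each approach buys: the paper's proof is shorter but imports the nontrivial inclusion of the symmetry group of a quartic del Pezzo surface into $W(\mathrm{D}_5)$ and the tritangent-plane uniqueness fact; yours is self-contained combinatorics on the $27$ lines using only data already tabulated in Section~\ref{section:Weyl}, avoids the tritangent argument entirely by working with the six swapped skew pairs, and yields the small bonus that $\delta$ necessarily interchanges the two lines of its invariant pair.
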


\begin{proof}
Let $D$ be a cyclic subgroup in $W(\mathrm{E}_6)$ generated by $\delta.$ Since the order of $D$ is $10,$ we get that the length of an orbit under the action of $D$ on the configuration of lines on $S$ can be $1,$ $2,$ $5$ or $10.$ Let us prove that there are no orbits of length~$1.$ Assume the converse. Let $l$ be a $D$-invariant line on $S.$ Then we can blow down $l$ and get the del Pezzo surface $S'$ of degree $4.$ Let 
$$
\pi \colon S \to S'
$$
\noindent be this blowdown. By~\cite[Corollary 8.2.40]{DolgClass} and~\cite[\S 3.1]{DolgachevDuncan} we have 
$$
\mathrm{Aut}(S') \subseteq W(\mathrm{D}_5) \simeq (\mathbb{Z}/2\mathbb{Z})^4 \rtimes \mathfrak{S}_5,
$$
\noindent where $W(\mathrm{D}_5)$ is the Weyl group of the root system $\mathrm{D}_5.$ It is not hard to see that in~$W(\mathrm{D}_5)$ there are no elements of order $10.$ Meanwhile, as  $l$ is a $D$-invariant line, we get that $D$ acts on the configuration of lines on $S',$ since $D$ preserves the intersection form on $\mathrm{Pic}(S_{\overline{\mathbf{F}}}).$ This means that $D \subset W(\mathrm{D}_5),$ which is a contradiction. So assertion~\ref{lemma:order10length1} is proven.

Now let us prove the existence of an orbit of length $2$ under the action of $D$ which is represented by $2$ skew lines. The existence of an orbit of length $2$ follows from~\ref{lemma:order10length1}. Assume that this orbit consists of intersecting lines. Then there is a unique line~$l$ which intersects both of these lines. But this means that $l$ is $D$-invariant which contradicts~\ref{lemma:order10length1}. So we get that the orbit of length $2$ consists of skew lines. So assertion~\ref{lemma:order10length2} is proven.

\end{proof}

\begin{Lemma}\label{lemma:quintuplewhichintersect2skewlines}
Let $S$ be a smooth cubic surface over a field $\mathbf{F}$ such that all $27$ exceptional lines are defined over $\mathbf{F}.$ Let $l_1$ and $l_2$ be two skew lines on $S.$ Then there are exactly $5$ lines on $S$ which intersect both $l_1$ and $l_2.$

\end{Lemma}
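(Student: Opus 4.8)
The plan is to exploit the conic bundle structure on $S$ determined by $l_1$, rather than to argue combinatorially with all $27$ lines at once. Since all lines are defined over $\mathbf{F}$, Lemma~\ref{lemma:conicbundleoncubic} applies to $l_1$ and produces a conic bundle $b\colon S\to\mathbb{P}^1$, cut out by the pencil of planes in $\mathbb{P}^3$ through $l_1$, with exactly $5$ degenerate geometric fibres, each of which is a union of two intersecting lines. First I would identify the lines meeting $l_1$ with the components of these degenerate fibres: if a line $m\neq l_1$ meets $l_1$, then $m$ and $l_1$ span a unique plane $\Pi$, so $\Pi\cap S$ is a plane cubic containing $l_1\cup m$ and the residual conic is reducible; hence $m$ is a component of a degenerate fibre of $b$. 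Conversely, two lines lying in a common plane through $l_1$ automatically meet $l_1$. Thus every line meeting both $l_1$ and $l_2$ in particular meets $l_1$ and is therefore a component of one of the $5$ degenerate fibres.

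Next I would show that $l_2$ is a section of $b$. The fibre class is the residual conic $f=-K_S-l_1$, and using $-K_S\cdot l_2=1$ (adjunction for the $(-1)$-curve $l_2$) together with $l_1\cdot l_2=0$ (the lines are skew) one computes
$$
l_2\cdot f=-K_S\cdot l_2-l_1\cdot l_2=1-0=1 .
$$
So $l_2$ meets each fibre in a single point. For a degenerate fibre $F=m+m'$ this gives $l_2\cdot m+l_2\cdot m'=1$ with both terms non-negative, so $l_2$ meets exactly one of the two components and, in particular, does not pass through the node $m\cap m'$. Hence each of the $5$ degenerate fibres contributes exactly one line meeting both $l_1$ and $l_2$.

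Finally, these $5$ lines are pairwise distinct, because each line meeting $l_1$ lies in a unique plane through $l_1$ and hence in a unique fibre; and by the first step they exhaust all lines meeting both $l_1$ and $l_2$. This yields the count of exactly $5$. The main thing to be careful about is the two borderline possibilities in the fibre analysis: that $l_2$ might coincide with a fibre component (excluded, since those components meet $l_1$ while $l_2$ is skew to $l_1$), and that $l_2$ might pass through a node of a degenerate fibre (excluded by $l_2\cdot F=1$). As an independent check, one can instead invoke the transitivity of $W(\mathrm{E}_6)$ on pairs of skew lines to reduce to $l_1=E_1$, $l_2=E_2$ and read off directly from the intersection table that the lines meeting both are exactly $L_{12},Q_3,Q_4,Q_5,Q_6$.
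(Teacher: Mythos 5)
Your proposal is correct and follows essentially the same route as the paper's proof: the conic bundle $b\colon S\to\mathbb{P}^1$ cut out by the pencil of planes through $l_1$ (Lemma~\ref{lemma:conicbundleoncubic}) has five degenerate fibres, and since $l_2$ is skew to $l_1$ it meets each fibre in one point, hence meets exactly one component of each degenerate fibre, giving the quintuple. You merely make explicit what the paper leaves implicit (the computation $l_2\cdot f=1$ with $f=-K_S-l_1$, the exclusion of the node, and that every line meeting $l_1$ is a fibre component), and your $W(\mathrm{E}_6)$-transitivity cross-check with $l_1=E_1$, $l_2=E_2$ is a valid independent confirmation.
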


\begin{proof}
Let us consider the conic bundle 
$$
b \colon S \to \mathbb{P}^1,
$$
\noindent which is given by  the pencil of planes on $\mathbb{P}^3_{\overline{\mathbf{F}}}$ passing through $l_1.$  By Lemma~\ref{lemma:conicbundleoncubic} there are $5$ degenerate fibres on $b.$ Since $l_1$ does not intersect $l_2,$ we get that $l_2$ intersects any fibre of~$b$ in a point. This means that for any degenerate fibre the line~$l_2$ intersects one of the line in this fibre. Denote these lines in the degenerate fibres by~$\mathcal{L}_i$ for $i=1, \ldots, 5.$ Since $l_1$ intersects any of $\mathcal{L}_i$ for $i=1, \ldots, 5$ by construction of~$b,$ we get that $\mathcal{L}_1, \mathcal{L}_2, \mathcal{L}_3, \mathcal{L}_4 , \mathcal{L}_5$ is the required quintuple of lines. The uniqueness follows from the fact that all lines which intersect $l_1$ lie in the degenerate fibres.

\end{proof}

\begin{Lemma}\label{lemma:twoskewinvariantline5}
Let $S$ be a smooth cubic surface over a field $\mathbf{F}$ such that all $27$ exceptional lines are defined over $\mathbf{F}.$ Let $G$ be a group of order $5$ which acts on the configuration of lines on $S.$ Then there are two skew lines on $S$ which are $G$-invariant.
\end{Lemma}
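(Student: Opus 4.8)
The plan is to count lines on $S$ invariant under the order-$5$ group $G$ and exploit the fact that all $27$ lines are defined over $\mathbf{F}$, so $G$ genuinely permutes them as a subgroup of $W(\mathrm{E}_6)$. Since $|G|=5$, every orbit of $G$ on the $27$ lines has length $1$ or $5$. Writing $f$ for the number of fixed (i.e. $G$-invariant) lines, we get $27 = f + 5m$ for some nonnegative integer $m$, so $f \equiv 27 \equiv 2 \pmod 5$; hence $f \in \{2, 7, 12, \dots\}$ and in particular $f \geqslant 2$. So there are at least two $G$-invariant lines. The first task is thus to pin down $f$ exactly, and the natural tool is the eigenvalue data for the order-$5$ conjugacy class $A_4$ recorded in Lemma~\ref{lemma:conjugacyclassese6}: a generator $g$ of $G$ has eigenvalues $\xi, \xi^2, \xi^3, \xi^4, 1, 1$ on the root lattice $\mathrm{E}_6 \otimes \mathbb{C}$, so its fixed subspace in $\mathrm{Pic}(S_{\overline{\mathbf{F}}}) \otimes \mathbb{C}$ (which is the orthogonal complement of $K_S$ together with $K_S$ itself) is $3$-dimensional. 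Using the standard formula that the number of fixed lines equals the sum over the $27$ lines of the indicator of being $g$-invariant, and that a line is $g$-invariant iff its class is fixed, one computes $f$ by a character/trace argument; the expected answer is $f=3$ (the same count that appears for the invariant lines in the order-$2$ analysis, and consistent with $3 \equiv 2 + 1$? — no, $3 \not\equiv 2$, so I must instead conclude $f \equiv 2 \pmod 5$ forces $f = 2$ or $f=7$).

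Let me reconcile this: the congruence $f \equiv 2 \pmod 5$ already \emph{guarantees} $f \geqslant 2$, which is exactly the ``there are two invariant lines'' half of the claim, and this requires no eigenvalue computation at all beyond the orbit-counting. So the cleanest route is: first invoke that $G \subset W(\mathrm{E}_6)$ acts on the $27$ lines (valid because all lines are defined over $\mathbf{F}$, and the $W(\mathrm{E}_6)$-action is by Theorem~\ref{th:autinw} realized on the configuration of lines), then apply the orbit-length argument to produce at least two invariant lines. The remaining, genuinely substantive task is to show that two of these invariant lines can be chosen to be \emph{skew}. For this I would argue as in Lemma~\ref{lemma:order10}: suppose two invariant lines $l_1, l_2$ intersect. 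Two intersecting lines on a cubic surface lie in a unique tritangent plane, whose third line $l_3$ is then forced to be $G$-invariant as well (since $G$ preserves the configuration and the intersection form, it permutes $\{l_1,l_2,l_3\}$, but it fixes $l_1,l_2$, hence fixes $l_3$). More usefully, among a set of mutually $G$-invariant lines one uses the incidence combinatorics of the $27$ lines to extract a skew pair.

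The cleanest mechanism for producing a skew invariant pair is the conic-bundle construction of Lemma~\ref{lemma:conicbundleoncubic} combined with Lemma~\ref{lemma:quintuplewhichintersect2skewlines}. Concretely: take one $G$-invariant line $L$ (which exists by the count above), form the associated conic bundle $b\colon S \to \mathbb{P}^1$; the group $G$ acts on this bundle and permutes its $5$ degenerate fibres. Because $|G|=5$, either $G$ fixes every degenerate fibre or it cyclically permutes all five. Each degenerate fibre is a pair of intersecting lines one of which meets $L$; analysing the $G$-action on the two components of an invariant fibre, and on the set of ``second'' invariant lines guaranteed by the congruence, should yield a line skew to $L$ that is itself $G$-invariant, giving the desired skew invariant pair. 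The main obstacle I anticipate is precisely this last combinatorial step: ruling out the degenerate scenario in which all $G$-invariant lines pairwise intersect. I expect to resolve it by showing that if the two invariant lines forced by $27 \equiv 2 \pmod 5$ were to intersect, then the full $G$-invariant configuration would be too small to be compatible with the $3$-dimensional fixed subspace dictated by the $A_4$ eigenvalue data of Lemma~\ref{lemma:conjugacyclassese6}, or alternatively by directly invoking the structure already used in Lemma~\ref{lemma:order10}\ref{lemma:order10length2}, where an intersecting invariant pair was excluded by producing a transversal line that would have to be invariant and thereby contradict the orbit count.
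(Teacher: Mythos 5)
You correctly set up the orbit count, exactly as the paper does: since all $27$ lines are defined over $\mathbf{F}$, the group $G$ permutes them with orbits of length $1$ or $5$, so the number $f$ of $G$-invariant lines satisfies $f\equiv 27\equiv 2\pmod 5$, giving $f\geqslant 2$ and, as you note, $f\in\{2,7,12,\dots\}$. You also state the paper's key geometric observation: if two invariant lines $l_1,l_2$ intersect, the unique third line of their tritangent plane (the unique line meeting both) is forced to be $G$-invariant. But you never assemble these two ingredients into a proof that a skew invariant pair exists; you explicitly flag this as ``the main obstacle'' and leave it at the level of two unexecuted plans (``should yield,'' ``I expect to resolve''). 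This is a genuine gap, and it is the only substantive content of the lemma. The paper closes it in two lines from exactly the pieces already in your text: if $l_1$ and $l_2$ intersect, the invariant transversal gives $f\geqslant 3$, whence $f\geqslant 7$ by the congruence $f\equiv 2\pmod 5$; and a family of pairwise intersecting lines on a smooth cubic surface has at most three members (pairwise incident lines are either coplanar, hence the three lines of a tritangent plane, or concurrent, through an Eckardt point), so seven invariant lines must contain two skew ones.

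Of your two fallback plans, the appeal to the mechanism of Lemma~\ref{lemma:order10}\ref{lemma:order10length2} would fail if transplanted literally: there, an invariant intersecting pair was absurd because the transversal would be an invariant line, contradicting Lemma~\ref{lemma:order10}\ref{lemma:order10length1} --- which holds only for elements of order $10$. An element of order $5$ does fix lines, so the invariant transversal yields no contradiction; it yields an \emph{extra} fixed line, which is precisely what the counting argument then exploits. Your eigenvalue plan via the $A_4$ data of Lemma~\ref{lemma:conjugacyclassese6} could in principle be completed (the trace of an $A_4$-element on the permutation representation on the $27$ lines gives $f=2$ exactly, and then an intersecting fixed pair is impossible because the invariant transversal would force $f\geqslant 3$), but you perform no such computation, and the conic-bundle analysis of the five degenerate fibres is likewise left as a sketch. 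So your approach is the paper's approach in its first half, and all the needed ideas appear in your proposal, but the decisive combinatorial step --- congruence plus tritangent transversal plus the bound of three on pairwise intersecting lines --- is missing.
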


\begin{proof}
The lengths of an orbit under the action of $G$ can be $1$ or $5.$ Since $27$ is not a multiple of $5,$ we get that there are at least two $G$-invariant lines. Denote them by $l_1$ and $l_2.$ Assume that they intersect each other. Then there is a unique line which intersects both $l_1$ and $l_2.$ This means that this line is also $G$-invariant. Therefore, we obtain three $G$-invariant lines. Since $27-3=24$ is not a multiple of~$5,$ we get that there are at least seven $G$-invariant lines. It is obvious, that there are two skew lines among these seven.

\end{proof}

\begin{Lemma}\label{lemma:5linesisorbit}
Let $S$ be a smooth cubic surface over a field $\mathbf{F}$ such that all $27$ exceptional lines are defined over $\mathbf{F}.$ Let $G$ be a subgroup of order $5$ in $\mathrm{Aut}(S).$ Then there is a quintuple of skew lines in quadratic position such that they form an orbit under the action of~$G.$ 
\end{Lemma}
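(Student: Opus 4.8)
The plan is to produce the required quintuple as the set of common transversals of the two skew invariant lines guaranteed by Lemma~\ref{lemma:twoskewinvariantline5}, and then to verify the two nontrivial features: that these five lines form a single $G$-orbit, and that they are in quadratic position. First I would apply Lemma~\ref{lemma:twoskewinvariantline5} to obtain two skew $G$-invariant lines $l_1$ and $l_2$. By Lemma~\ref{lemma:quintuplewhichintersect2skewlines} there are exactly five lines $\mathcal{L}_1, \ldots, \mathcal{L}_5$ meeting both $l_1$ and $l_2$, and since $l_1$ and $l_2$ are $G$-invariant, the group $G$ permutes this set. Realizing $\mathcal{L}_1, \ldots, \mathcal{L}_5$ as the fibre components of the conic bundle attached to $l_1$ that are met by the section $l_2$ (as in the proof of Lemma~\ref{lemma:quintuplewhichintersect2skewlines}), they lie in distinct fibres and are therefore pairwise skew. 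Fixing the standard blowup model $S_{\overline{\mathbf{F}}} \to \mathbb{P}^2_{\overline{\mathbf{F}}}$, this is the familiar configuration: with $l_1 = E_6$ and $l_2 = Q_6$ one has $E_6 \cdot Q_6 = 0$, the five transversals are $L_{16}, \ldots, L_{56}$, and $L_{i6}\cdot L_{j6}=0$ for $i \neq j$.

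Next I would show that $G$ acts transitively on $\{\mathcal{L}_1, \ldots, \mathcal{L}_5\}$. Since $|G|=5$ is prime, each orbit has length $1$ or $5$, so either the five lines form a single orbit or they are all $G$-fixed. In the latter case $l_1$, $l_2$ and $\mathcal{L}_1, \ldots, \mathcal{L}_5$ would give at least seven $G$-invariant lines. To rule this out I would count the fixed lines of a generator of $G$: by Lemma~\ref{lemma:conjugacyclassese6} the only conjugacy class of elements of order $5$ in $W(\mathrm{E}_6)$ is $A_4$, so every order-$5$ element has the same cycle type on the $27$ lines; computing with the $5$-cycle $(E_1\,E_2\,E_3\,E_4\,E_5) \in \mathfrak{S}_6 \subset W(\mathrm{E}_6)$ shows that it fixes exactly the two lines $E_6$ and $Q_6$. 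Hence any order-$5$ element fixes exactly two lines, contradicting seven, and therefore $\{\mathcal{L}_1, \ldots, \mathcal{L}_5\}$ is a single $G$-orbit.

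Finally I would verify that the quintuple is in quadratic position. Blowing the five pairwise skew lines down over $\overline{\mathbf{F}}$ produces a del Pezzo surface $X$ of degree $8$, which is either $\mathbb{P}^1_{\overline{\mathbf{F}}} \times \mathbb{P}^1_{\overline{\mathbf{F}}}$ or the Hirzebruch surface $\mathbb{F}_1$ (the blowup of $\mathbb{P}^2$ at one point). Since $\mathbb{F}_1$ carries a unique $(-1)$-curve which the five blown-down points must avoid for $S$ to be a smooth cubic surface, the case $X \simeq \mathbb{F}_1$ would force the existence of a line on $S$ skew to all five $\mathcal{L}_i$ (the strict transform of that $(-1)$-curve). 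In the model above one checks directly from the intersection table that no line among the $27$ is skew to all of $L_{16}, \ldots, L_{56}$: each $E_j$ and each $Q_j$ with $j \leqslant 5$ meets exactly one of them, each $L_{ab}$ with $a,b \leqslant 5$ meets three of them, and $E_6, Q_6$ meet all five. By the conjugacy-invariance of the configuration this holds for any two skew lines and their five transversals, so $X \simeq \mathbb{P}^1_{\overline{\mathbf{F}}} \times \mathbb{P}^1_{\overline{\mathbf{F}}}$ and $\mathcal{L}_1, \ldots, \mathcal{L}_5$ are in quadratic position.

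The main obstacle is organizing the last two steps: both reduce the abstract $G$-action to explicit $27$-line combinatorics, which is legitimate precisely because all order-$5$ elements of $W(\mathrm{E}_6)$ are conjugate and $W(\mathrm{E}_6)$ acts transitively on pairs of skew lines, so it suffices to carry out the count and the skewness check in a single standard model.
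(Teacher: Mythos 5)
Your proposal is correct, and its first half coincides with the paper's proof: both obtain two skew $G$-invariant lines from Lemma~\ref{lemma:twoskewinvariantline5}, take their five common transversals via Lemma~\ref{lemma:quintuplewhichintersect2skewlines}, and note that $G$ permutes this quintuple. Where you diverge is the endgame. The paper rules out the Hirzebruch surface $\mathbb{F}_1$ by descending \emph{both} conic bundles (the pencils of planes through $l_1$ and through $l_2$) to the blowdown $S'$, producing two distinct $\mathbb{P}^1$-bundle structures on $S'$, which contradicts Lemma~\ref{lemma:uniqueconicbundlef1}; you instead observe that $X\simeq\mathbb{F}_1$ would force its unique $(-1)$-curve to avoid the five blown-down points (else $S$ would carry a $(-2)$-curve), yielding a line on $S$ skew to all five transversals, and you kill this by an explicit check in the standard model $(E_6,\,Q_6,\,L_{16},\ldots,L_{56})$ -- legitimately transferred to the general configuration since $W(\mathrm{E}_6)$ acts transitively on ordered pairs of skew lines. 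Both arguments exploit the same geometry of $\mathbb{F}_1$ (its Mori cone has only one fibration class, equivalently a unique $(-1)$-curve), so neither is more general, but the paper's version is softer while yours is more computational and self-contained at this step. You also prove two things the paper leaves implicit in this lemma: pairwise skewness of the transversals (via distinct degenerate fibres), and, more substantially, that the quintuple is a single $G$-orbit rather than five fixed lines -- your count that any order-$5$ element, necessarily of class $A_4$ by Lemma~\ref{lemma:conjugacyclassese6}, fixes exactly two of the $27$ lines (checked on the $5$-cycle $(E_1\,E_2\,E_3\,E_4\,E_5)$, which fixes only $E_6$ and $Q_6$) is accurate and actually sharpens the counting argument of Lemma~\ref{lemma:twoskewinvariantline5}, whereas the paper only addresses the orbit issue with a one-line remark in the parallel Lemma~\ref{lemma:z/5zz/2zquadric}. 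So your write-up is a valid proof filling genuine expository gaps, at the cost of the explicit $27$-line bookkeeping.
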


\begin{proof}
 By Lemma~\ref{lemma:twoskewinvariantline5} there are two $G$-invariant skew lines on $S.$ Denote them by~$l_1$ and $l_2.$ By Lemma~\ref{lemma:quintuplewhichintersect2skewlines}  there are exactly $5$ lines on $S$ which intersect both $l_1$ and $l_2.$  Let us blow down this quintuple. Then we get a del Pezzo surface $S'$ of degree $8.$ Assume that it is a blowup of $\mathbb{P}^2$ in a point.  Then consider two conic bundles on $S,$ say $b_1$ and $b_2,$ such that 
$$
b_i \colon S \to \mathbb{P}^1
$$
\noindent is  given by the pencil of planes on $\mathbb{P}^3_{\overline{\mathbf{F}}}$ passing through $l_i$ for $i=1,2.$ The conic bundle $b_i$ gives the structure of $\mathbb{P}^1$-bundle on $S'$ for $i=1,2.$ So we get two different $\mathbb{P}^1$-bundle on $S'.$ By Lemma~\ref{lemma:uniqueconicbundlef1} this is impossible. Thus, we get that $S' \simeq \mathbb{P}^1 \times \mathbb{P}^1.$

\end{proof}

\begin{Cor}\label{cor:5linesisorbitequiv}
Let $S$ be a smooth cubic surface over a finite field $\mathbf{F}$ such that all~$27$ exceptional lines are defined over $\mathbf{F}.$ Let $G$ be a subgroup of order $5$ in $\mathrm{Aut}(S).$ Then there is a $\mathbb{Z}/5\mathbb{Z}$-equivariant blowdown $\pi:S \to \mathbb{P}^1 \times \mathbb{P}^1.$
\end{Cor}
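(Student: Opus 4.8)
The plan is to upgrade the purely geometric statement of Lemma~\ref{lemma:5linesisorbit} to a statement over the finite field $\mathbf{F}$ itself. First I would apply Lemma~\ref{lemma:5linesisorbit} to produce five pairwise skew lines $l_1,\ldots,l_5$ on $S$ which form a single orbit under $G$ and which are in quadratic position. Because all $27$ lines on $S$ are defined over $\mathbf{F}$ by hypothesis, each $l_i$ is $\mathbf{F}$-rational; being pairwise disjoint $(-1)$-curves defined over $\mathbf{F},$ they can be contracted simultaneously over $\mathbf{F},$ yielding a birational morphism $\pi \colon S \to X$ defined over $\mathbf{F},$ where $X$ is a del Pezzo surface of degree $8$ with $X_{\overline{\mathbf{F}}} \simeq \mathbb{P}^1_{\overline{\mathbf{F}}} \times \mathbb{P}^1_{\overline{\mathbf{F}}}$ (this last isomorphism is precisely the content of quadratic position, Definition~\ref{def:quadraticposit}). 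Since $G$ permutes the set $\{l_1,\ldots,l_5\}$ (it is a $G$-orbit) and consists of automorphisms defined over $\mathbf{F},$ the contraction $\pi$ is automatically $G$-equivariant, so $G$ descends to an action on $X$ and $\pi$ intertwines the two $G$-actions.

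It then remains to identify $X$ with $\mathbb{P}^1 \times \mathbb{P}^1$ rather than with the Weil restriction of scalars $\mathrm{R}_{\mathbf{L}/\mathbf{F}}(\mathbb{P}^1).$ By Lemma~\ref{lemma:propertyofrestrictionWeil} this is exactly the assertion that the Picard number of $X$ equals $2.$ To establish it, I would observe that the hypothesis that all $27$ lines are defined over $\mathbf{F}$ forces $\mathrm{Gal}(\overline{\mathbf{F}}/\mathbf{F})$ to fix every one of the $27$ line classes; since these classes generate $\mathrm{Pic}(S_{\overline{\mathbf{F}}})$ (for instance $H \sim L_{12}+E_1+E_2,$ so $H$ lies in their span, and the $E_i$ are among the $27$ lines), the Galois action on $\mathrm{Pic}(S_{\overline{\mathbf{F}}})$ is trivial, i.e. $\Gamma=\{e\}.$ The pullback $\pi^* \colon \mathrm{Pic}(X_{\overline{\mathbf{F}}}) \hookrightarrow \mathrm{Pic}(S_{\overline{\mathbf{F}}})$ is injective and Galois-equivariant (as $\pi$ is defined over $\mathbf{F}$), so triviality of the Galois action on the target transports, via injectivity, to triviality of the Galois action on $\mathrm{Pic}(X_{\overline{\mathbf{F}}}) \simeq \mathbb{Z}^2.$ Hence the Picard number of $X$ over $\mathbf{F}$ is $2,$ and Lemma~\ref{lemma:propertyofrestrictionWeil} yields $X \simeq \mathbb{P}^1 \times \mathbb{P}^1.$ Combining this with the first paragraph gives the desired $G$-equivariant blowdown $\pi \colon S \to \mathbb{P}^1 \times \mathbb{P}^1.$

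The only genuinely delicate point is excluding the Weil-restriction alternative, and the computation above shows that it is ruled out automatically once all $27$ lines are $\mathbf{F}$-rational. I do not expect any serious obstacle beyond two routine verifications: that the simultaneous contraction of five disjoint $\mathbf{F}$-rational $(-1)$-curves descends to a morphism over $\mathbf{F}$ (Galois-equivariant Castelnuovo contraction / descent, the five curves being individually Galois-invariant), and that the quintuple supplied by Lemma~\ref{lemma:5linesisorbit} is $G$-invariant as a set, which is immediate since it is a $G$-orbit.
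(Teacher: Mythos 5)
Your proposal is correct and follows essentially the same route as the paper: invoke Lemma~\ref{lemma:5linesisorbit} to get the $G$-orbit of five skew lines in quadratic position, contract them over $\mathbf{F}$ to obtain a $G$-equivariant morphism $\pi\colon S \to X,$ show $\mathrm{rk}\,\mathrm{Pic}(X)=2$ using that all $27$ lines are $\mathbf{F}$-rational, and conclude via Lemma~\ref{lemma:propertyofrestrictionWeil}. The only cosmetic difference is that the paper computes $\mathrm{rk}\,\mathrm{Pic}(X)=\mathrm{rk}\,\mathrm{Pic}(S)-5=7-5=2$ directly, whereas you transport the triviality of the Galois action through the injection $\pi^*,$ which amounts to the same thing.
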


\begin{proof}
By Lemma~\ref{lemma:5linesisorbit} there are $5$ skew lines in quadratic position which form a $G$-orbit. Therefore, blowing them down gives us a~\mbox{$\mathbb{Z}/5\mathbb{Z}$-equivariant} map 
$$
\pi:S \to X,
$$
\noindent where $X_{\overline{\mathbf{F}}} \simeq \mathbb{P}^1 \times \mathbb{P}^1.$ Since  all $27$ exceptional lines are defined on $S,$ we have the equality~\mbox{$\mathrm{rk}\mathrm{Pic}(S)=7,$} and, therefore, 
$$
\mathrm{rk}\mathrm{Pic}(X)=\mathrm{rk}\mathrm{Pic}(S)-5=2.
$$
\noindent By Lemma \ref{lemma:propertyofrestrictionWeil} we get that $X \simeq \mathbb{P}^1 \times \mathbb{P}^1.$

\end{proof}

\begin{Lemma}\label{lemma:z/5zz/2zquadric}
Let $S$ be a smooth cubic surface over $\mathbf{F}$ such that the image $\Gamma$ of the absolute Galois group $\mathrm{Gal}(\mathbf{F}^{sep}/\mathbf{F})$ in $W(\mathrm{E}_6)$ is isomorphic to~$\mathbb{Z}/2\mathbb{Z}.$ Let $G$ be a subgroup of order $5$ in $\mathrm{Aut}(S).$  Then there is a quintuple of skew lines in quadratic position  such that they form an orbit under the action of~$G.$ 

\end{Lemma}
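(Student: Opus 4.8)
The plan is to manufacture the desired quintuple from an element of order $10$ built out of the given order-$5$ automorphism and the order-$2$ Galois element. Since every automorphism of $S$ is defined over $\mathbf{F}$, the image $\Gamma\simeq\mathbb{Z}/2\mathbb{Z}$ centralizes $\mathrm{Aut}(S)$, hence centralizes $G$. Writing $h$ for a generator of $G$ and $\gamma$ for the generator of $\Gamma$, the commuting elements $h$ (of order $5$) and $\gamma$ (of order $2$) produce $\delta:=h\gamma\in W(\mathrm{E}_6)$ of order $10$, with $\delta^2=h^2$ generating $G$ and $\delta^5=\gamma$. All the lattice-theoretic statements of Section~\ref{section:Weyl} and the combinatorial parts of Section~\ref{section:cubicsurface} apply to the action of $\delta$ on the $27$ lines of $S_{\overline{\mathbf{F}}}$, even though not all of these lines are defined over $\mathbf{F}$.

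First I would locate a $G$-invariant skew pair. By Lemma~\ref{lemma:order10}\ref{lemma:order10length2} there is a $\delta$-invariant pair of skew lines $\{m_1,m_2\}$, and by Lemma~\ref{lemma:order10}\ref{lemma:order10length1} the element $\delta$ fixes no line, so $\delta$ interchanges $m_1$ and $m_2$. Then $\delta^2=h^2$, a generator of $G$, fixes each of $m_1,m_2$, so $m_1,m_2$ are $G$-invariant skew lines, while $\gamma=\delta^5$ swaps them. By Lemma~\ref{lemma:quintuplewhichintersect2skewlines} there are exactly five lines $\mathcal{L}_1,\dots,\mathcal{L}_5$ meeting both $m_1$ and $m_2$; realizing them as one component in each of the five degenerate fibres of the conic bundle through $m_1$ (Lemma~\ref{lemma:conicbundleoncubic}) shows they are pairwise skew, since distinct fibres of a conic bundle over $\mathbb{P}^1$ are disjoint.

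The heart of the argument is the orbit structure on $\{\mathcal{L}_1,\dots,\mathcal{L}_5\}$. Since $\delta$ preserves $\{m_1,m_2\}$, it permutes this five-element set, and by Lemma~\ref{lemma:order10}\ref{lemma:order10length1} it has no fixed line there. As the orbit lengths of the cyclic group $\langle\delta\rangle$ divide $10$ and each is $\geqslant 2$, the only admissible partition of $5$ is $5=5$; thus $\delta$ acts as a single $5$-cycle. Consequently $G=\langle\delta^2\rangle$ acts transitively, so the quintuple is a single $G$-orbit, while $\gamma=\delta^5$ acts as the identity, so each $\mathcal{L}_i$ is $\Gamma$-invariant, i.e.\ defined over $\mathbf{F}$. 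This simultaneous payoff — transitivity of $G$ together with $\Gamma$-triviality — is precisely why routing through the order-$10$ element is the right move, and I expect this to be the main step; the alternative of extracting $G$-invariant lines directly would not control definability over $\mathbf{F}$.

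It remains to check that the $\mathcal{L}_i$ are in quadratic position, which reproduces the final step of Lemma~\ref{lemma:5linesisorbit}. Blow them down over $\overline{\mathbf{F}}$ to a del Pezzo surface $S'$ of degree $8$. Since $\mathcal{L}_i\cdot(-K_S-m_j)=1-1=0$ for $j=1,2$, each $\mathcal{L}_i$ is a fibre component of both conic bundles, the one through $m_1$ and the one through $m_2$; blowing down the five components therefore turns both bundles into distinct $\mathbb{P}^1$-bundle structures on $S'$. By Lemma~\ref{lemma:uniqueconicbundlef1} the surface $\mathbb{F}_1$ carries a unique conic bundle, so $S'_{\overline{\mathbf{F}}}\simeq\mathbb{P}^1_{\overline{\mathbf{F}}}\times\mathbb{P}^1_{\overline{\mathbf{F}}}$, and hence $\mathcal{L}_1,\dots,\mathcal{L}_5$ are five skew lines in quadratic position forming a $G$-orbit, as required. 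The only subtlety to flag in the write-up is that $m_1,m_2$ and the two conic bundles are defined only over the quadratic extension, so this last computation is performed over $\overline{\mathbf{F}}$; the definability of the $\mathcal{L}_i$ over $\mathbf{F}$ was already secured in the previous paragraph.
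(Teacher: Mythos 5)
Your proposal is correct and takes essentially the same route as the paper's proof: both combine $G$ and $\Gamma$ into an order-$10$ element of $W(\mathrm{E}_6)$, apply Lemma~\ref{lemma:order10} to obtain an invariant pair of skew lines, take the five lines meeting both of them via Lemma~\ref{lemma:quintuplewhichintersect2skewlines}, and finally rule out the blowdown being a blowup of $\mathbb{P}^2$ at a point. Your two local variations are cosmetic: the $5$-cycle argument for $\delta$ packages in one step the $G$-transitivity and the $\Gamma$-invariance of the quintuple (which the paper argues separately, via uniqueness of the quintuple and via faithfulness of the $G$-action on $\mathrm{Pic}(S_{\overline{\mathbf{F}}})$), and you exclude the blowup of $\mathbb{P}^2$ at a point through the two distinct conic-bundle structures and Lemma~\ref{lemma:uniqueconicbundlef1}, exactly as in Lemma~\ref{lemma:5linesisorbit}, whereas the paper instead produces a $D$-invariant line contradicting Lemma~\ref{lemma:order10}\ref{lemma:order10length1}.
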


\begin{proof}
Let $\gamma$  be a generator of $\Gamma$ and $g$ be a generator of $G.$ Since $\gamma$ and $g$ are commute in $W(\mathrm{E}_6),$ we can consider the group $D=G \times \Gamma \simeq \mathbb{Z}/10\mathbb{Z}.$ By Lemma~\ref{lemma:order10}\ref{lemma:order10length2} there is an orbit of length $2$ of skew lines $l_1$ and $l_2$ on $S_{\overline{\mathbf{F}}}$  under the action of $D$ on the configuration of lines on $S_{\overline{\mathbf{F}}}.$ Therefore, this orbit under the action of $D$ is also the orbit of length $2$ under the action of $\Gamma.$ By Lemma~\ref{lemma:quintuplewhichintersect2skewlines}  there are exactly $5$ lines on $S,$ say  $\mathcal{L}_1, \mathcal{L}_2, \mathcal{L}_3, \mathcal{L}_4, \mathcal{L}_5,$ which intersect both $l_1$ and $l_2.$ Since this quintuple of lines is unique we get that any $\mathcal{L}_i$ is~$\Gamma$-invariant, which means that all of them are defined over $\mathbf{F}.$ Moreover, this quintuple is $G$-orbit of length $5,$ since otherwise $G$ acts trivially on $S.$

 Let us  blow down this quintuple. We get a del Pezzo surface $S'$ of degree $8.$ Assume that $S'$ is a blowup of $\mathbb{P}^2$ in a point. Then there is a line on $S$ which is invariant under the action of $D.$ However, this contradicts Lemma~\ref{lemma:order10}\ref{lemma:order10length1}. So we get that $S'_{\overline{\mathbf{F}}} \simeq \mathbb{P}^1_{\overline{\mathbf{F}}} \times \mathbb{P}^1_{\overline{\mathbf{F}}}.$

\end{proof}

\begin{Cor}\label{cor:z/5zz/2zquadric}
Let $S$ be a smooth cubic surface over a finite field $\mathbf{F}$ such that the image $\Gamma$ of the absolute Galois group $\mathrm{Gal}(\mathbf{F}^{sep}/\mathbf{F})$ in $W(\mathrm{E}_6)$ is isomorphic to~$\mathbb{Z}/2\mathbb{Z}.$ Let~$G$ be a subgroup of order $5$ in $\mathrm{Aut}(S).$ Then there is a $\mathbb{Z}/5\mathbb{Z}$-equivariant blowdown $\pi:S \to Q,$ where $Q \simeq \mathrm{R}_{\mathbf{L}/\mathbf{F}}(\mathbb{P}^1)$ such that $\mathbf{F} \subset \mathbf{L}$ is a separable quadratic extension of $\mathbf{F}.$
\end{Cor}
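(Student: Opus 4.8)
The plan is to combine Lemma~\ref{lemma:z/5zz/2zquadric} with Lemma~\ref{lemma:propertyofrestrictionWeil}, exactly as in the proof of Corollary~\ref{cor:5linesisorbitequiv}. First I would invoke Lemma~\ref{lemma:z/5zz/2zquadric}, which under the present hypotheses ($\Gamma \simeq \mathbb{Z}/2\mathbb{Z}$ and $G \simeq \mathbb{Z}/5\mathbb{Z} \subset \mathrm{Aut}(S)$) produces a quintuple of skew lines on $S_{\overline{\mathbf{F}}}$ in quadratic position forming a single $G$-orbit. Blowing down this $G$-orbit is $\mathbb{Z}/5\mathbb{Z}$-equivariant by construction, and it yields a del Pezzo surface $Q$ of degree $8$ with $Q_{\overline{\mathbf{F}}} \simeq \mathbb{P}^1_{\overline{\mathbf{F}}} \times \mathbb{P}^1_{\overline{\mathbf{F}}}$ (the latter being guaranteed by the conclusion of Lemma~\ref{lemma:z/5zz/2zquadric}, which rules out the Hirzebruch case via the uniqueness-of-conic-bundle argument).

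Next I would compute the Picard number of $Q$. Since all $27$ exceptional lines are defined over $\mathbf{F}$ in Corollary~\ref{cor:5linesisorbitequiv}, there one gets $\mathrm{rk}\,\mathrm{Pic}(S) = 7$; but here the hypothesis is instead that $\Gamma \simeq \mathbb{Z}/2\mathbb{Z}$ acts nontrivially, so the lines are \emph{not} all defined over $\mathbf{F}$ and the Picard number of $S$ is strictly smaller. The key point is that the two invariant skew lines $l_1, l_2$ supplied by Lemma~\ref{lemma:z/5zz/2zquadric} are interchanged by $\Gamma$ (this is where the orbit of length $2$ under $\Gamma$ enters), so they do not contribute two independent $\Gamma$-invariant classes. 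I would argue that after the equivariant blowdown the surface $Q$ has Picard number $1$: the two rulings of $Q_{\overline{\mathbf{F}}}$ are swapped by the generator of $\Gamma$, so no divisor class of bidegree $(1,0)$ descends to $\mathbf{F}$, leaving only the anticanonical-proportional class. Concretely, $\mathrm{rk}\,\mathrm{Pic}(Q) = \mathrm{rk}\,\mathrm{Pic}(Q_{\overline{\mathbf{F}}})^{\Gamma} = 1$.

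Having established that $Q$ is a del Pezzo surface of degree $8$ over the finite field $\mathbf{F}$ with $Q_{\overline{\mathbf{F}}} \simeq \mathbb{P}^1_{\overline{\mathbf{F}}} \times \mathbb{P}^1_{\overline{\mathbf{F}}}$ and Picard number $1$, I would apply the second dichotomy in Lemma~\ref{lemma:propertyofrestrictionWeil}: the Picard-number-$1$ case forces $Q \simeq \mathrm{R}_{\mathbf{L}/\mathbf{F}}(\mathbb{P}^1)$ for a quadratic extension $\mathbf{F} \subset \mathbf{L}$, which over a finite field is automatically separable. This completes the identification of the target of the blowdown.

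The main obstacle I anticipate is pinning down that $\mathrm{rk}\,\mathrm{Pic}(Q) = 1$ rather than $2$, i.e.\ verifying that the nontrivial Galois action genuinely swaps the two families of lines on the blown-down quadric and does not merely permute lines within a single ruling. The cleanest way to see this is to track the $\Gamma$-action through the blowdown: the length-$2$ $\Gamma$-orbit $\{l_1, l_2\}$ consists of the classes that become the two rulings on $Q_{\overline{\mathbf{F}}}$, and since $\Gamma$ interchanges $l_1$ and $l_2$ it must interchange the two rulings, forcing the $\Gamma$-invariant part of $\mathrm{Pic}(Q_{\overline{\mathbf{F}}})$ to be one-dimensional. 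Once this is in place, Lemma~\ref{lemma:propertyofrestrictionWeil} does the rest automatically.
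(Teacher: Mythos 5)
Your proposal is correct and has the same skeleton as the paper's proof: Lemma~\ref{lemma:z/5zz/2zquadric} supplies the $G$-orbit of five skew lines in quadratic position (each defined over $\mathbf{F}$), blowing them down is $\mathbb{Z}/5\mathbb{Z}$-equivariant and defined over $\mathbf{F}$, and Lemma~\ref{lemma:propertyofrestrictionWeil} converts Picard rank $1$ into the Weil restriction of scalars. Where you diverge is the rank computation, and there the paper is purely numerical: since $\Gamma$ is non-trivial and acts faithfully on $\mathrm{Pic}(S_{\overline{\mathbf{F}}}),$ one has $\mathrm{rk}\,\mathrm{Pic}(S)\leqslant 6,$ and blowing down five lines each defined over $\mathbf{F}$ gives $\mathrm{rk}\,\mathrm{Pic}(X)=\mathrm{rk}\,\mathrm{Pic}(S)-5\leqslant 1,$ hence $=1$; no geometry of the rulings is needed. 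Your geometric route ($\Gamma$ swaps $l_1$ and $l_2,$ hence swaps the two rulings, hence the invariant part of $\mathrm{Pic}(Q_{\overline{\mathbf{F}}})$ has rank $1$) is also valid and arguably more informative, since it exhibits how Galois acts on the quadric rather than just counting ranks. However, your stated justification for the swap is imprecise: the images $\pi_{*}l_1$ and $\pi_{*}l_2$ are \emph{not} the two rulings --- they are irreducible curves of bidegree $(2,1)$ and $(1,2)$ --- so $l_1,l_2$ do not ``become'' the rulings. The clean argument is that the two rulings pull back to the conic-bundle fibre classes $-K_S-l_1$ and $-K_S-l_2,$ which $\Gamma$ interchanges because it fixes $K_S$ and interchanges $l_1$ and $l_2$; alternatively, $\Gamma$ preserves the intersection form and the effective cone of $Q_{\overline{\mathbf{F}}},$ so it either fixes or swaps the rulings, and fixing them would force $\pi_{*}l_1$ to be $\Gamma$-invariant, a contradiction. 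One further nitpick: inside Lemma~\ref{lemma:z/5zz/2zquadric} the Hirzebruch case is excluded by Lemma~\ref{lemma:order10}\ref{lemma:order10length1} (no $D$-invariant line), not by the uniqueness-of-conic-bundle argument of Lemma~\ref{lemma:uniqueconicbundlef1}, which the paper uses only in the $\Gamma=\{e\}$ case (Lemma~\ref{lemma:5linesisorbit}); this misattribution is immaterial since you invoke only the conclusion of the lemma.
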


\begin{proof}
By Lemma~\ref{lemma:z/5zz/2zquadric} there are $5$ skew lines in quadratic position  such that they form a $G$-orbit (in particular, all of them are defined over $\mathbf{F}$). Therefore, blowing them down gives us a~\mbox{$\mathbb{Z}/5\mathbb{Z}$-equivariant} map 
$$
\pi:S \to X,
$$
\noindent where $X_{\overline{\mathbf{F}}} \simeq \mathbb{P}^1 \times \mathbb{P}^1.$ As the group $\Gamma$ is not trivial, we obtain $\mathrm{rk}\mathrm{Pic}(S)\leqslant 6,$ and, therefore, 
$$
\mathrm{rk}\mathrm{Pic}(X)=\mathrm{rk}\mathrm{Pic}(S)-5=1.
$$
\noindent By Lemma \ref{lemma:propertyofrestrictionWeil} we get that $X \simeq Q,$ where $Q \simeq \mathrm{R}_{\mathbf{L}/\mathbf{F}}(\mathbb{P}^1)$ such that $\mathbf{F} \subset \mathbf{L}$ is a separable quadratic extension of $\mathbf{F}.$

\end{proof}

Now we give a simple lemma about the triviality of the image of the absolute Galois group in $W(\mathrm{E}_6)$ for smooth cubic surface with large automorphism group.

\begin{Lemma}\label{cor:galoispsu4}
Let $S$ be a smooth cubic surface over a field $\mathbf{F}$ such that its automorphism group is isomorphic either to $\mathrm{PSU}_4(\mathbb{F}_2),$ or to $\left(\mathbb{Z}/2\mathbb{Z}\right)^4 \rtimes \mathfrak{A}_5.$ Then the image $\Gamma$ of the absolute Galois group $\mathrm{Gal}(\mathbf{F}^{sep}/\mathbf{F})$ in $W(\mathrm{E}_6)$ is trivial.
\end{Lemma}

\begin{proof}
This directly follows from Lemma~\ref{lemma:galoispsu4}, since $\Gamma$ commutes with the automorphism group of $S.$

\end{proof}

Finally, we prove two lemmas about the image of the absolute Galois group in the Weyl group $W(\mathrm{E}_6)$ over $\mathbb{F}_{2^{2k+1}}.$

\begin{Lemma}\label{lemma:f24kgaloisgroupisnontrivial}
Let $S$ be a smooth cubic surface over $\mathbb{F}_{2^{2k+1}}$ such that $\mathrm{Aut}(S)$ contains a subgroup of order $5.$ Then the image of the absolute Galois group in $W(\mathrm{E}_6)$ is non-trivial.
\end{Lemma}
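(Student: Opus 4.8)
The plan is to argue by contradiction: I will assume that $\Gamma$ is trivial and deduce the existence of an element of order $5$ in $\mathrm{PGL}_2(\mathbb{F}_{2^{2k+1}})$, which is forbidden by a direct order count.

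First I would observe that triviality of $\Gamma$ means the absolute Galois group acts trivially on $\mathrm{Pic}(S_{\overline{\mathbf{F}}})$, and in particular fixes the class of each of the $27$ lines. Since each such line is a $(-1)$-curve, and its class admits a unique effective representative, being fixed as a class forces the line itself to be Galois-stable; hence all $27$ exceptional lines are defined over $\mathbf{F}=\mathbb{F}_{2^{2k+1}}$. By Theorem~\ref{th:autinw} the hypothesis furnishes a subgroup $G\simeq\mathbb{Z}/5\mathbb{Z}\subset\mathrm{Aut}(S)\subset W(\mathrm{E}_6)$, and $\mathbf{F}$ is finite, so all the hypotheses of Corollary~\ref{cor:5linesisorbitequiv} are satisfied.

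Next I would apply Corollary~\ref{cor:5linesisorbitequiv} to obtain a $\mathbb{Z}/5\mathbb{Z}$-equivariant blowdown $\pi\colon S\to\mathbb{P}^1\times\mathbb{P}^1$ defined over $\mathbf{F}$; here the target is honest $\mathbb{P}^1\times\mathbb{P}^1$ (Picard rank $2$) precisely because all $27$ lines are defined over $\mathbf{F}$. Consequently $G$ embeds into
$$
\mathrm{Aut}(\mathbb{P}^1\times\mathbb{P}^1)\simeq\left(\mathrm{PGL}_2(\mathbf{F})\times\mathrm{PGL}_2(\mathbf{F})\right)\rtimes\mathbb{Z}/2\mathbb{Z},
$$
the shape of this group being supplied by Lemma~\ref{lemma:propertyofrestrictionWeil}. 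Projecting onto the $\mathbb{Z}/2\mathbb{Z}$ quotient kills $G$, since a cyclic group of order $5$ has no quotient of order $2$; hence the image of $G$ lies in $\mathrm{PGL}_2(\mathbf{F})\times\mathrm{PGL}_2(\mathbf{F})$, and so at least one of its two projections must be an element of order $5$ in $\mathrm{PGL}_2(\mathbf{F})$.

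The contradiction then comes directly from Lemma~\ref{cor:orderp1timesp1notdiv5}: the order of $\mathrm{PGL}_2(\mathbb{F}_{2^{2k+1}})$ is not divisible by $5$, so neither factor has an element of order $5$, and hence the product has none either, contradicting the existence of the faithful image of $G$. Therefore $\Gamma$ cannot be trivial. I expect the only delicate point to be the bookkeeping that forces the blowdown target to be $\mathbb{P}^1\times\mathbb{P}^1$ over the base field rather than a Weil restriction $\mathrm{R}_{\mathbf{L}/\mathbf{F}}(\mathbb{P}^1)$: over the quadratic extension $\mathbf{L}=\mathbb{F}_{4^{2k+1}}$ one does have $5\mid|\mathrm{PGL}_2(\mathbf{L})|$, so the argument genuinely relies on the rank-$2$ conclusion that the vanishing of $\Gamma$ provides through Corollary~\ref{cor:5linesisorbitequiv}.
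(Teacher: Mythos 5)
Your proof is correct and takes essentially the same route as the paper: assume $\Gamma$ trivial, deduce that all $27$ lines are defined over the base field, use Corollary~\ref{cor:5linesisorbitequiv} to obtain a $\mathbb{Z}/5\mathbb{Z}$-equivariant blowdown to $\mathbb{P}^1\times\mathbb{P}^1$ over $\mathbb{F}_{2^{2k+1}}$, and contradict Lemma~\ref{cor:orderp1timesp1notdiv5}. You merely spell out details the paper leaves implicit, namely why triviality of $\Gamma$ puts all lines over the base field, and why a faithful order-$5$ action on $\mathbb{P}^1\times\mathbb{P}^1$ forces an element of order $5$ in $\mathrm{PGL}_2(\mathbb{F}_{2^{2k+1}})$.
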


\begin{proof}
Assume the converse. Then all exceptional lines on $S_{\overline{\mathbb{F}}_2}$ are defined over the field~$\mathbb{F}_{2^{2k+1}}.$ By Lemma~\ref{lemma:5linesisorbit} we get that there is $\mathbb{Z}/5\mathbb{Z}$-equivariant blowdown to~\mbox{$\mathbb{P}^1 \times \mathbb{P}^1.$} However, by Lemma~\ref{cor:orderp1timesp1notdiv5} the order of the group $\mathrm{Aut}(\mathbb{P}^1 \times \mathbb{P}^1)$ is not divisible by $5.$

\end{proof}

\begin{Lemma}\label{cor:galoiss6}
Let $S$ be a smooth cubic surface over $\mathbb{F}_{2^{2k+1}}$ such that~\mbox{$\mathrm{Aut}(S) \simeq \mathfrak{S}_6.$} Then the image $\Gamma$ of the absolute Galois group $\mathrm{Gal}\left(\overline{\mathbb{F}}_2/\mathbb{F}_{2^{2k+1}}\right)$ in $W(\mathrm{E}_6)$ is isomorphic to $\mathbb{Z}/2\mathbb{Z}$ and generated by an element lying in the conjugacy class $A_1.$
\end{Lemma}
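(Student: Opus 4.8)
The plan is to pin down $\Gamma$ by exploiting two facts: that $\Gamma$ commutes with all of $\mathrm{Aut}(S)\simeq\mathfrak{S}_6$, and that $\mathfrak{S}_6$ contains an element of order $5$. First I would note that $|\mathfrak{S}_6|=720$ is divisible by $5$, so $\mathrm{Aut}(S)$ contains a subgroup isomorphic to $\mathbb{Z}/5\mathbb{Z}$; fix a generator $g$ of such a subgroup. By Lemma~\ref{lemma:f24kgaloisgroupisnontrivial} the image $\Gamma$ of the absolute Galois group is non-trivial, so it remains to show that $\Gamma\simeq\mathbb{Z}/2\mathbb{Z}$ and that its generator lies in $A_1$.

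The core of the argument is the computation of the centralizer of $\mathfrak{S}_6$ in $W(\mathrm{E}_6)$. Since every automorphism of $S$ is defined over $\mathbf{F}$, the group $\Gamma$ lies in this centralizer, as recalled in the text preceding Lemma~\ref{lemma:conicbundleoncubic}. Any element centralizing $\mathfrak{S}_6$ in particular centralizes $g$, so by Lemma~\ref{lemma:centralizerconjugacyclassA4} it lies in the cyclic group $C_{W(\mathrm{E}_6)}(g)\simeq\mathbb{Z}/10\mathbb{Z}$. I would then argue that this centralizer of $\mathfrak{S}_6$ contains no element whose order is divisible by $5$: the elements of order $5$ in $\mathbb{Z}/10\mathbb{Z}$ are precisely the powers of $g$, and none of them can centralize $\mathfrak{S}_6$ because $\mathfrak{S}_6$ has trivial center while $g\in\mathfrak{S}_6$; similarly, if an element $s$ of order $10$ centralized $\mathfrak{S}_6$, then so would $s^2$, which has order $5$, a contradiction. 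Hence the centralizer of $\mathfrak{S}_6$ is contained in the unique subgroup $\mathbb{Z}/2\mathbb{Z}\subset\mathbb{Z}/10\mathbb{Z}$.

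Combining this containment with the non-triviality of $\Gamma$ forces $\Gamma\simeq\mathbb{Z}/2\mathbb{Z}$, equal to that order-$2$ subgroup of $C_{W(\mathrm{E}_6)}(g)$. Finally, the generator $h$ of $\Gamma$ is an element of order $2$ lying in the centralizer of the order-$5$ element $g$, so Corollary~\ref{lemma:a1commutewitha4} immediately places $h$ in the conjugacy class $A_1$, which completes the proof.

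The argument is short once the preparatory lemmas of Section~\ref{section:Weyl} are in place; I expect the only point requiring care to be the verification that no element of order $5$ or $10$ inside $C_{W(\mathrm{E}_6)}(g)$ can centralize the whole of $\mathfrak{S}_6$. This rests squarely on the triviality of $Z(\mathfrak{S}_6)$ together with the fact that $g$ itself belongs to $\mathfrak{S}_6$, so that the elements obstructing centrality are exactly the ones that already fail to be central in $\mathfrak{S}_6$.
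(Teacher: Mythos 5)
Your proposal is correct and follows essentially the same route as the paper: non-triviality of $\Gamma$ via Lemma~\ref{lemma:f24kgaloisgroupisnontrivial}, containment of the centralizer of $\mathfrak{S}_6$ in $C_{W(\mathrm{E}_6)}(g)\simeq\mathbb{Z}/10\mathbb{Z}$ via Lemma~\ref{lemma:centralizerconjugacyclassA4}, exclusion of elements of order $5$ and $10$ because $g\in\mathfrak{S}_6$ and $Z(\mathfrak{S}_6)$ is trivial, and the conjugacy class $A_1$ via Corollary~\ref{lemma:a1commutewitha4}. Your explicit check of the order-$10$ case merely spells out a step the paper leaves implicit; there is no substantive difference.
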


\begin{proof}
Let us consider an element $g \in \mathfrak{S}_6$ of order~$5$ in $W(\mathrm{E}_6).$ By Lemma~\ref{lemma:centralizerconjugacyclassA4} its centralizer in~$W(\mathrm{E}_6)$ is isomorphic to $\mathbb{Z}/10\mathbb{Z}.$  The element $g$ does not lie in the centralizer of $\mathfrak{S}_6 \subset W(\mathrm{E}_6),$ because it does not lie in the centre of $\mathfrak{S}_6.$ Therefore, the centralizer of $\mathfrak{S}_6$ in~$W(\mathrm{E}_6)$ lies in $\mathbb{Z}/2\mathbb{Z}.$ This means that $\Gamma \subseteq \mathbb{Z}/2\mathbb{Z}.$ By Lemma~\ref{lemma:f24kgaloisgroupisnontrivial} the group  $\Gamma$ is non-trivial. Therefore, the group $\Gamma$ is isomorphic to $\mathbb{Z}/2\mathbb{Z}.$

By Corollary~\ref{lemma:a1commutewitha4} the group $\Gamma$ is generated by an element lying in the conjugacy class $A_1.$

\end{proof}

\section{Proof of the main results}\label{section:proofofmainresults}
In this section we study automorphism groups of smooth cubic surfaces over all finite fields of characteristic $2$ and prove Theorems~\ref{th:maximalaut2} and~\ref{th:maximalaut2z/5z}. First of all, we prove the following lemmas about the automorphism group of maximal order of a smooth cubic surface over a finite field of characteristic $2.$

\begin{Lemma}\label{lemma:maximal4kPSU}
Let $S$ be a smooth cubic surface over $\mathbb{F}_{4^k}.$ Then $|\mathrm{Aut}(S)| \leqslant 25\, 920$ and in the case of equality one has $\mathrm{Aut}(S)\simeq \mathrm{PSU}_4(\mathbb{F}_2).$
\end{Lemma}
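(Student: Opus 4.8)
The plan is to combine the general upper bound from Lemma~\ref{lemma:allautgroupinpsu} with a structural analysis that pins down which group of order $25\,920$ can actually occur over a field containing a fourth root of unity. The inequality $|\mathrm{Aut}(S)| \leqslant 25\,920$ is immediate from Lemma~\ref{lemma:allautgroupinpsu}, since $\mathbb{F}_{4^k}$ is a subfield of $\overline{\mathbb{F}}_2$ and enlarging the field can only enlarge the automorphism group; so the content is the equality case. First I would invoke Theorem~\ref{th:autinw} to regard $\mathrm{Aut}(S)$ as a subgroup of $W(\mathrm{E}_6)$, and recall from Lemma~\ref{lemma:w=psuz/2z} that $W(\mathrm{E}_6) \simeq \mathrm{PSU}_4(\mathbb{F}_2) \rtimes \mathbb{Z}/2\mathbb{Z}$, a group of order $2 \cdot 25\,920 = 51\,840$. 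Thus a subgroup of order exactly $25\,920$ is an index-$2$ subgroup of $W(\mathrm{E}_6)$.

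The key step is to show that the only index-$2$ subgroup of $W(\mathrm{E}_6)$ is the normal subgroup $\mathrm{PSU}_4(\mathbb{F}_2)$ itself, so that equality forces $\mathrm{Aut}(S) \simeq \mathrm{PSU}_4(\mathbb{F}_2)$. Index-$2$ subgroups correspond to surjections $W(\mathrm{E}_6) \to \mathbb{Z}/2\mathbb{Z}$, equivalently to the nontrivial elements of $\mathrm{Hom}(W(\mathrm{E}_6)^{\mathrm{ab}}, \mathbb{Z}/2\mathbb{Z})$. Since $\mathrm{PSU}_4(\mathbb{F}_2)$ is simple and nonabelian, the commutator subgroup of $W(\mathrm{E}_6)$ contains $\mathrm{PSU}_4(\mathbb{F}_2)$, forcing $W(\mathrm{E}_6)^{\mathrm{ab}} \simeq \mathbb{Z}/2\mathbb{Z}$; hence there is a unique index-$2$ subgroup, namely the kernel of the projection onto $\mathbb{Z}/2\mathbb{Z}$, which is exactly $\mathrm{PSU}_4(\mathbb{F}_2)$. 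This is the cleanest way to conclude; alternatively one notes that any subgroup of order $25\,920$ must be normal (index $2$) and meets the simple factor $\mathrm{PSU}_4(\mathbb{F}_2)$ in a subgroup of index at most $2$, which by simplicity is all of $\mathrm{PSU}_4(\mathbb{F}_2)$, and comparing orders gives equality.

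The remaining subtlety is that the analysis so far is purely group-theoretic and holds for any field; what is special about $\mathbb{F}_{4^k}$ (as opposed to $\mathbb{F}_{2^{2k+1}}$) is that the bound $25\,920$ is actually \emph{achieved}. I expect the main obstacle to be verifying realizability, i.e.\ that over $\mathbb{F}_{4^k}$ there genuinely exists a smooth cubic surface with $\mathrm{Aut}(S) \simeq \mathrm{PSU}_4(\mathbb{F}_2)$; but this direction is supplied by Theorem~\ref{th:autfermatf4}, which gives the Fermat surface over $\mathbb{F}_4 \subset \mathbb{F}_{4^k}$ with precisely this automorphism group, so the full group of order $25\,920$ is attained. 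Since the present lemma only asserts the inequality together with the identification of the group in the equality case (the uniqueness of $S$ being deferred to the proof of Theorem~\ref{th:maximalaut2}), I would not need to establish realizability here; it suffices to record that equality forces $\mathrm{Aut}(S) \simeq \mathrm{PSU}_4(\mathbb{F}_2)$ by the index-$2$ argument above. The whole proof is therefore short: cite Lemma~\ref{lemma:allautgroupinpsu} for the bound, then use Theorem~\ref{th:autinw} and Lemma~\ref{lemma:w=psuz/2z} to conclude that a subgroup of $W(\mathrm{E}_6)$ of order $25\,920$ must coincide with the unique simple normal subgroup $\mathrm{PSU}_4(\mathbb{F}_2)$.
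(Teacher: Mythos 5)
Your proof is correct, and it reaches the equality case by a genuinely different route than the paper. The paper's own proof is existence-based: after the bound from Lemma~\ref{lemma:allautgroupinpsu}, it cites Theorem~\ref{th:autfermatf4} (the Fermat cubic over $\mathbb{F}_4 \subset \mathbb{F}_{4^k}$ has automorphism group $\mathrm{PSU}_4(\mathbb{F}_2)$) and concludes that the automorphism group of maximal order over $\mathbb{F}_{4^k}$ is $\mathrm{PSU}_4(\mathbb{F}_2)$; the identification of an \emph{arbitrary} group attaining $25\,920$ with $\mathrm{PSU}_4(\mathbb{F}_2)$ is left implicit there, resting on $\mathrm{Aut}(S) \subseteq \mathrm{Aut}(S_{\overline{\mathbf{F}}})$ together with the equality case of Theorem~\ref{Th:DolgachevDuncan}. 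You instead make that identification purely group-theoretic: by Theorem~\ref{th:autinw} and Lemma~\ref{lemma:w=psuz/2z}, a subgroup of order $25\,920$ has index $2$ in $W(\mathrm{E}_6)$, and since $\mathrm{PSU}_4(\mathbb{F}_2)$ is nonabelian simple (hence perfect) while $W(\mathrm{E}_6)$ surjects onto $\mathbb{Z}/2\mathbb{Z}$, one gets $W(\mathrm{E}_6)^{\mathrm{ab}} \simeq \mathbb{Z}/2\mathbb{Z}$ and thus a unique index-$2$ subgroup, namely $\mathrm{PSU}_4(\mathbb{F}_2)$; your alternative version, intersecting a normal subgroup of order $25\,920$ with the simple factor and using that a simple nonabelian group has no index-$2$ subgroup, is equally sound. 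What your route buys: it is self-contained within the $W(\mathrm{E}_6)$ formalism, works verbatim over any base field, avoids invoking the Dolgachev--Duncan equality case, and correctly observes that realizability is not part of the lemma's assertion. What the paper's route buys: it is shorter and simultaneously records that the bound is actually attained over every $\mathbb{F}_{4^k}$, which is the form in which the lemma is quoted later in the proof of Theorem~\ref{th:maximalaut2}; since Theorem~\ref{th:autfermatf4} supplies that attainment independently, as you note, deferring it loses nothing.
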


\begin{proof}

The inequality follows from Lemma~\ref{lemma:allautgroupinpsu}. Let us consider smooth cubic surfaces over $\mathbb{F}_{4^k}.$ By Theorem~\ref{th:autfermatf4} the automorphism group of the Fermat cubic surface over $\mathbb{F}_{4}$ is isomorphic to $\mathrm{PSU}_4(\mathbb{F}_2).$ This means that the automorphism group of maximal order of a smooth cubic surface over $\mathbb{F}_{4^k}$ is isomorphic to $\mathrm{PSU}_4(\mathbb{F}_2).$

\end{proof}

\begin{Lemma}\label{lemma:maximal24kS6}
Let $S$ be a smooth cubic surface over $\mathbb{F}_{2^{2k+1}}.$ Then $|\mathrm{Aut}(S)|\leqslant 720$ and in the case of equality  one has $\mathrm{Aut}(S)\simeq \mathfrak{S}_6.$
\end{Lemma}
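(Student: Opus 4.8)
The plan is to bound $|\mathrm{Aut}(S)|$ for a smooth cubic surface $S$ over $\mathbb{F}_{2^{2k+1}}$ by exploiting the fact that the relevant field contains no nontrivial fifth root of unity, so that the structure theory of $5$-element subgroups developed in Sections~\ref{section:projectiveline}--\ref{section:quadricsurfaces} becomes available. First I would recall from Lemma~\ref{lemma:allautgroupinpsu} that $|\mathrm{Aut}(S)|\leqslant 25\,920$ in general, so it suffices to rule out all group orders strictly between $720$ and $25\,920$ that can occur as $\mathrm{Aut}(S)\subset W(\mathrm{E}_6)$. By Theorem~\ref{th:autinw}, $\mathrm{Aut}(S)$ embeds in $W(\mathrm{E}_6)\simeq \mathrm{PSU}_4(\mathbb{F}_2)\rtimes\mathbb{Z}/2\mathbb{Z}$ (Lemma~\ref{lemma:w=psuz/2z}), whose order is $51\,840$. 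A group of order exceeding $720$ in $W(\mathrm{E}_6)$ must, by inspection of the subgroup lattice encoded in Lemma~\ref{lemma:maxsubgroupspsu4}, have order divisible by $5$; indeed the only maximal subgroups of $\mathrm{PSU}_4(\mathbb{F}_2)$ of order above $720$ are the groups of order $960$, and these contain elements of order $5$ (e.g.\ via the $\mathfrak{A}_5$ factor). So the strategy is: any automorphism group of order $>720$ forces $\mathbb{Z}/5\mathbb{Z}\subset\mathrm{Aut}(S)$.

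The heart of the argument is then a contradiction via the Galois action. Assuming $\mathbb{Z}/5\mathbb{Z}\subset\mathrm{Aut}(S)$, Lemma~\ref{lemma:f24kgaloisgroupisnontrivial} tells me that over $\mathbb{F}_{2^{2k+1}}$ the image $\Gamma$ of the absolute Galois group in $W(\mathrm{E}_6)$ is \emph{nontrivial}. On the other hand, by Lemma~\ref{cor:galoispsu4}, if $\mathrm{Aut}(S)$ were isomorphic to $\mathrm{PSU}_4(\mathbb{F}_2)$ or to $(\mathbb{Z}/2\mathbb{Z})^4\rtimes\mathfrak{A}_5$, then $\Gamma$ would be trivial. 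This immediately excludes these two candidates. The remaining work is to show that no group strictly between $720$ and $25\,920$ other than these can occur, which I would handle by combining the commuting relation $\Gamma\subseteq Z_{W(\mathrm{E}_6)}(\mathrm{Aut}(S))$ (from the fact that $\Gamma$ centralizes $\mathrm{Aut}(S)$) with the centralizer computations of Section~\ref{section:Weyl}. Since $\Gamma$ must be nonzero yet centralize a large group containing an order-$5$ element, and since by Lemma~\ref{lemma:centralizerconjugacyclassA4} the centralizer of an order-$5$ element is only $\mathbb{Z}/10\mathbb{Z}$, the ambient group $\mathrm{Aut}(S)$ cannot be too large: any group whose centralizer in $W(\mathrm{E}_6)$ is nontrivial and which contains an element of order $5$ is tightly constrained.

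Concretely I expect the final step to reduce to showing that the largest subgroup of $W(\mathrm{E}_6)$ containing an element of order $5$ and admitting a nontrivial centralizer is exactly $\mathfrak{S}_6$ (of order $720$, the unique maximal subgroup of that order up to conjugacy by Lemma~\ref{lemma:maxsubgroupspsu4}, with $\Gamma\simeq\mathbb{Z}/2\mathbb{Z}$ generated by an $A_1$-element as in Lemma~\ref{cor:galoiss6}). Thus equality $|\mathrm{Aut}(S)|=720$ forces $\mathrm{Aut}(S)\simeq\mathfrak{S}_6$. The main obstacle I anticipate is the bookkeeping to confirm that every subgroup of $W(\mathrm{E}_6)$ of order in the open interval $(720,\,25\,920)$ either fails to contain an order-$5$ element (and hence has order $\leqslant 720$ by the subgroup-order analysis) or has trivial centralizer in $W(\mathrm{E}_6)$ (and hence is incompatible with the nontrivial $\Gamma$ forced by Lemma~\ref{lemma:f24kgaloisgroupisnontrivial}). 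This requires a careful pass through the maximal-subgroup data together with the order-$5$ centralizer being $\mathbb{Z}/10\mathbb{Z}$, but no new geometry.
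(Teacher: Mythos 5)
There is a genuine gap, and it sits exactly at the step you flagged as ``bookkeeping.'' Your bridge from $|\mathrm{Aut}(S)|>720$ to $\mathbb{Z}/5\mathbb{Z}\subset\mathrm{Aut}(S)$ reads the maximal-subgroup table of Lemma~\ref{lemma:maxsubgroupspsu4} as if it governed subgroups of all of $W(\mathrm{E}_6)$, but Theorem~\ref{th:autinw} only places $\mathrm{Aut}(S)$ in $W(\mathrm{E}_6)$, not in the simple part $\mathrm{PSU}_4(\mathbb{F}_2)$ (automorphisms can act as reflections, which have determinant $-1$ and lie outside the derived subgroup). In $W(\mathrm{E}_6)$ your claim that any subgroup of order exceeding $720$ has order divisible by $5$ is false: by the paper's own Table~\ref{table:conjugacyclassese6} (Lemma~\ref{lemma:conjugacyclassese6}), the centralizer of an involution of class $A_1^4$ is a subgroup of order $1152=2^7\cdot 3^2$, which exceeds $720$ and contains no element of order $5$. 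Your fallback criterion also fails: $W(\mathrm{E}_6)$ contains $W(\mathrm{A}_5\times\mathrm{A}_1)\simeq\mathfrak{S}_6\times\mathbb{Z}/2\mathbb{Z}$, of order $1440$, which contains elements of order $5$ \emph{and} has nontrivial centralizer (its central involution, of class $A_1$, is compatible with Corollary~\ref{lemma:a1commutewitha4}), so the ``largest subgroup with an order-$5$ element and nontrivial centralizer'' is not $\mathfrak{S}_6$, and your Galois-centralizer sieve cannot exclude a hypothetical $\mathrm{Aut}(S)$ of order $1440$ (nor $1152$, where Lemma~\ref{lemma:f24kgaloisgroupisnontrivial} gives nothing at all). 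Pure group theory inside $W(\mathrm{E}_6)$, together with only the order bound of Lemma~\ref{lemma:allautgroupinpsu}, is not enough.

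The missing idea is geometric input from the Dolgachev--Duncan classification, which is how the paper proceeds. One first notes that the maximum of $|\mathrm{Aut}(S)|$ over $\mathbb{F}_{2^{2k+1}}$ is at least $720$, since surface~\eqref{eq:cubicsurfacef2} with its $\mathfrak{S}_6$ of automorphisms is defined already over $\mathbb{F}_2$ (Theorem~\ref{th:cubicoverf2}). Then, by \cite[Theorem~1.1]{DolgachevDuncan}, any smooth cubic in characteristic $2$ with $|\mathrm{Aut}(S)|\geqslant 720$ satisfies $\mathrm{Aut}(S_{\overline{\mathbb{F}}_2})\simeq\mathrm{PSU}_4(\mathbb{F}_2)$; hence $\mathrm{Aut}(S)\subseteq\mathrm{PSU}_4(\mathbb{F}_2)$, and only now does Lemma~\ref{lemma:maxsubgroupspsu4} legitimately reduce the candidates of order $\geqslant 720$ to $\mathfrak{S}_6$, $(\mathbb{Z}/2\mathbb{Z})^4\rtimes\mathfrak{A}_5$ and $\mathrm{PSU}_4(\mathbb{F}_2)$ (any subgroup of order $720$ inside the $960$-group is impossible by Lagrange, so equality forces $\mathfrak{S}_6$). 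Your endgame for the two large candidates is correct and is exactly the paper's: both contain $\mathbb{Z}/5\mathbb{Z}$, so $\Gamma$ is nontrivial by Lemma~\ref{lemma:f24kgaloisgroupisnontrivial}, contradicting the trivial centralizers of Lemma~\ref{cor:galoispsu4}. So repair your proposal by replacing the false $W(\mathrm{E}_6)$-lattice claim with the appeal to the classification; the centralizer contradiction then goes through verbatim.
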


\begin{proof}
Let $S$ be a smooth cubic surface over $\mathbb{F}_{2^{2k+1}}$ such that its automorphism group is of maximal order among smooth cubic surfaces over $\mathbb{F}_{2^{2k+1}}.$
Then by Theorem~\ref{th:cubicoverf2} we have $|\mathrm{Aut}(S)| \geqslant 720.$ By~\cite[Theorem 1.1]{DolgachevDuncan} this means that 
$$
\mathrm{Aut}(S_{\overline{\mathbb{F}}_2}) \simeq \mathrm{PSU}_4(\mathbb{F}_2).
$$
\noindent  Therefore, the group $\mathrm{Aut}(S)$ is a subgroup in $\mathrm{PSU}_4(\mathbb{F}_2).$ By Lemmas~\ref{lemma:maxsubgroupspsu4} and~\ref{lemma:allautgroupinpsu} we get $3$ candidates for being the largest automorphism group of a smooth cubic surface $S$:
$$
\mathfrak{S}_6, \; \mathrm{PSU}_4(\mathbb{F}_2) \; \text{and} \; \left(\mathbb{Z}/2\mathbb{Z}\right)^4 \rtimes \mathfrak{A}_5.
$$
\noindent The groups $\mathrm{PSU}_4(\mathbb{F}_2)$ and $\left(\mathbb{Z}/2\mathbb{Z}\right)^4 \rtimes \mathfrak{A}_5$  are not automorphism groups of a smooth cubic surface over $\mathbb{F}_{2^{2k+1}},$ because otherwise by Lemma~\ref{cor:galoispsu4} the image of the absolute Galois group in $W(\mathrm{E}_6)$ is trivial.  However, by Lemma~\ref{lemma:f24kgaloisgroupisnontrivial} it is impossible. This means that the largest automorphism group is isomorphic to $\mathfrak{S}_6.$

\end{proof}

Now we are ready to prove our main results. Let us start with Theorem~\ref{th:maximalaut2z/5z}.

\begin{proof}[Proof of Theorem~\ref{th:maximalaut2z/5z}] 

Let us prove assertion~\ref{th:maximalaut2F4^kz/5z}.  Since $\Gamma$ is trivial, all  lines on~$S_{\overline{\mathbf{F}}}$ are defined over $\mathbf{F}.$ By Lemma~\ref{lemma:5linesisorbit} there are $5$ skew lines in quadratic position such that they form an orbit under the action of $\mathbb{Z}/5\mathbb{Z} \subset \mathrm{Aut}(S).$  By Corollary~\ref{cor:5linesisorbitequiv} we can blow them down $\mathbb{Z}/5\mathbb{Z}$-equivariantly and get $\mathbb{P}^1 \times \mathbb{P}^1.$  By Remark~\ref{remark:blowdowngeneralposition} the blowdown gives $5$ points in general position on $\mathbb{P}^1\times \mathbb{P}^1.$ By Propositions~\ref{prop:4^kwithfifth} and~\ref{prop:4^kwithoutfifth} we get that the set of these~$5$ points is unique up to conjugation. Therefore, a smooth cubic surface such that its automorphism group contains $\mathbb{Z}/5\mathbb{Z}$ and $\Gamma$ is trivial is unique up to isomorphism. By Theorem~\ref{th:autfermatf4} we get that the automorphism group of the Fermat cubic surface contains $\mathbb{Z}/5\mathbb{Z}.$ Thus, $S$ is isomorphic to the Fermat cubic surface.

Now let us prove assertion~\ref{th:maximalaut2F24^kz/5z}. By Lemma~\ref{lemma:z/5zz/2zquadric} there are $5$ skew lines in quadratic position  such that they form an orbit under the action of $\mathbb{Z}/5\mathbb{Z} \subset \mathrm{Aut}(S).$ By Corollary~\ref{cor:z/5zz/2zquadric}  we can blow them down $\mathbb{Z}/5\mathbb{Z}$-equivariantly and get the Weil restriction of scalars $Q \simeq \mathrm{R}_{\mathbf{L}/\mathbf{F}}(\mathbb{P}^1),$ where $\mathbf{F} \subset \mathbf{L}$ is a quadratic extension of~$\mathbf{F}.$  By Remark~\ref{remark:blowdowngeneralposition} the blowdown gives us $5$ points in general position on $Q.$   By Proposition~\ref{prop:24^k} we get that the set of these~$5$ points is unique up to automorphism.  Therefore, a smooth cubic surface such that its automorphism group contains $\mathbb{Z}/5\mathbb{Z}$ and $\Gamma \simeq \mathbb{Z}/2\mathbb{Z}$ is unique up to isomorphism.  By Theorem~\ref{th:cubicoverf2} we get that the automorphism group of cubic surface \eqref{eq:cubicsurfacef2} contains $\mathbb{Z}/5\mathbb{Z}.$ Thus,~$S$ is isomorphic to cubic surface~\eqref{eq:cubicsurfacef2}.

\end{proof}

\begin{proof}[Proof of Theorem~\ref{th:maximalaut2}]

First of all, let us consider smooth cubic surfaces over $\mathbb{F}_{4^k}.$ By Lemma~\ref{lemma:maximal4kPSU} the maximal order automorphism group of such a surface is isomorphic to $\mathrm{PSU}_4(\mathbb{F}_2).$ It remains to show that a smooth cubic surface with such automorphism group is unique up to isomorphism. As $\mathrm{PSU}_4(\mathbb{F}_2)$ contains~$\mathbb{Z}/5\mathbb{Z}$ and  by Lemma~\ref{cor:galoispsu4} the image of the absolute Galois group in $W(\mathrm{E}_6)$ is trivial, we can apply Theorem~\ref{th:maximalaut2z/5z}\ref{th:maximalaut2F4^kz/5z} and get the desired.

Now let us consider smooth cubic surfaces over $\mathbb{F}_{2^{2k+1}}.$ By Lemma~\ref{lemma:maximal24kS6} the automorphism group of maximal order of such a surface is isomorphic to $\mathfrak{S}_6.$ It remains to show that a smooth cubic surface with such automorphism group is unique up to isomorphism. The group $\mathfrak{S}_6$ contains $\mathbb{Z}/5\mathbb{Z}.$ By Lemma~\ref{cor:galoiss6} the image of the absolute Galois group in $W(\mathrm{E}_6)$ is isomorphic to $\mathbb{Z}/2\mathbb{Z}.$ So we can apply Theorem~\ref{th:maximalaut2z/5z}\ref{th:maximalaut2F24^kz/5z} and get the desired.

\end{proof}

\section{Fermat cubic surface over fields of characteristic $2$}\label{section:fermat}

In this section we study automorphism group of the Fermat cubic surface over fields of characteristic $2$ and prove Proposition \ref{prop:Fermat} and Corollary~\ref{cor:Fermat}.

\begin{proof}[Proof of Proposition \ref{prop:Fermat}]
By Theorem~\ref{th:maximalaut2}\ref{th:maximalaut2F4^k} the automorphism group of the Fermat cubic surface over $\mathbb{F}_{4^k}$ is isomorphic to $\mathrm{PSU}_4(\mathbb{F}_2).$ Let us briefly recall the proof of this statement, which can be found in~\cite[\S 5.1]{DolgachevDuncan}, cf. also~\cite[\S 1.3 and 2.3]{Cheng}. Let us consider the Hermitian bilinear form on the $4$-dimensional vector space over $\mathbb{F}_4$
$$
H\left((x,y,z,t),(x',y',z',t')\right)=x\Phi(x')+y\Phi(y')+z\Phi(z')+t\Phi(t'),
$$
\noindent  where $\Phi: \mathbb{F}_4 \to \mathbb{F}_4$ is the Frobenius automorphism defined by $\Phi(\mathtt{x})=\mathtt{x}^2$ for $\mathtt{x} \in \mathbb{F}_4.$ Therefore, since the equation of the Fermat cubic surface is
$$
H\left((x,y,z,t),(x,y,z,t)\right)=x^3+y^3+z^3+t^3,
$$
\noindent we get that its automorphism group contains the group $\mathrm{PSU}_4(\mathbb{F}_2).$ So by Lemma~\ref{lemma:maximal4kPSU} this means that the automorphism group of the Fermat cubic surface over $\mathbb{F}_4$ and algebraic extensions of $\mathbb{F}_4$ is isomorphic to  $\mathrm{PSU}_4(\mathbb{F}_2).$

So let us consider the Fermat cubic surface over the field $\mathbb{F}_{2^{2k+1}}.$ Over its algebraic extension of degree $2$ the automorphism group of the Fermat cubic surface is isomorphic to $\mathrm{PSU}_4(\mathbb{F}_2).$ These are the elements $A$ in $\mathrm{PGL}_4(\mathbb{F}_4)$ such that 
\begin{equation}\label{eq:atopphia}
A^{\top}\Phi(A)=E,
\end{equation}
\noindent where $\Phi$ acts on the entries of $A,$ $E$ is the neutral element in $\mathrm{PGL}_4(\mathbb{F}_4)$ and $A^{\top}$ is the transpose of $A.$ 

Consider the algebraic extension $\mathbb{F}_{2^{2k+1}} \subset \mathbb{F}_{4^{2k+1}}.$ Let $\Gamma$ be the Galois group of this extension. Then the automorphism group of the smooth cubic surface over~$\mathbb{F}_{2^{2k+1}}$ is a subgroup 
$$
G=\mathrm{PSU}_4(\mathbb{F}_2)^{\Gamma} \subset \mathrm{PSU}_4(\mathbb{F}_2)
$$
\noindent of Galois invariant elements. Since the Galois group of a finite algebraic extension of finite fields is generated by the Frobenius automorphism we get that $\Gamma$ is generated by $\psi,$ i.e. 
$$
\psi(\mathtt{x})=\mathtt{x}^{2^{2k+1}} \;\; \text{for} \;\; \mathtt{x} \in  \mathbb{F}_{4^{2k+1}}.
$$
\noindent So we get that $G$ is a subgroup of the group $\mathrm{PGL}_4(\mathbb{F}_4)$ consisting of the elements~$A$ which satisfies the equation 
\begin{equation}\label{eq:a=phia}
A=\Phi(A).
\end{equation}

This means that the automorphism groups of the Fermat cubic surface over~$\mathbb{F}_2$ and over~$\mathbb{F}_{2^{2k+1}}$ are isomorphic. So it is enough to find the automorphism group of the Fermat cubic surface over $\mathbb{F}_2.$ By~\eqref{eq:atopphia} and~\eqref{eq:a=phia} these are the elements 
$$
M \in \mathrm{GL}_4(\mathbb{F}_2)
$$
\noindent which satisfy the equation $M^{\top}M=E.$ In other words, these are the matrices
$$
M=\begin{pmatrix}
m_{11} & m_{12} & m_{13} & m_{14}\\
m_{21} & m_{22} & m_{23} & m_{24}\\
m_{31} & m_{32} & m_{33} & m_{34}\\
m_{41} & m_{42} & m_{43} & m_{44}\\
\end{pmatrix}
 \in \mathrm{GL}_4(\mathbb{F}_2),
$$
\noindent  such that 
\begin{gather}
\sum_{i=1}^4m_{ij}=1 \;\; \text{for all} \;\; j=1,2,3,4 \quad \text{and} \label{eq:sumcolimn}\\
  \sum_{i=1}^4m_{ij}m_{ik}=0 \;\;  \text{for} \;\; j,k=1,2,3,4 \;\; \text{and} \;\;  j \neq k.\label{eq:orthogonal0}
\end{gather}

\noindent Using non-degeneracy of $M,$ by \eqref{eq:sumcolimn} we get that  any column of $M$ consists of either one $1,$ or three $1.$ By \eqref{eq:orthogonal0} we get that if one of the column of $M$ consists of one~$1,$ then the other columns also consist of one $1;$ and  if one of the column of~$M$ consists of three $1,$ then the other columns also consist of three $1.$  The group of such elements is isomorphic to~$\mathbb{Z}/2\mathbb{Z} \times \mathfrak{S}_4.$ This completes the proof.

\end{proof}

\begin{proof}[Proof of Corollary \ref{cor:Fermat}]
 By Theorem~\ref{th:maximalaut2}\ref{th:maximalaut2F24^k} the automorphism group of cubic~\eqref{eq:cubicsurfacef2} is isomorphic to $\mathfrak{S}_6,$ while by Proposition~\ref{prop:Fermat} the automorphism of the Fermat cubic surface is isomorphic to $\mathbb{Z}/2\mathbb{Z} \times \mathfrak{S}_4.$ This proves assertion~\ref{prop:Fermat24^k}.

The isomorphism between \eqref{eq:cubicsurfacef2} and the Fermat cubic surface over $\mathbb{F}_{4^k}$  is given by the collineation  
$$
[x:y:z:t] \mapsto [x+y:\omega x+\omega^2 y:z+t:\omega z+\omega^2 t],
$$
\noindent where $\omega \in \mathbb{F}_{4^k}$ is a non-trivial cube root of unity, which maps cubic surface \eqref{eq:cubicsurfacef2} to the Fermat cubic surface. This proves assertion~\ref{prop:Fermat4^k}.

\end{proof}

%
%
%

\providecommand{\bysame}{\leavevmode\hbox to3em{\hrulefill}\thinspace}
\providecommand{\MR}{\relax\ifhmode\unskip\space\fi MR }
\providecommand{\MRhref}[2]{%
  \href{http://www.ams.org/mathscinet-getitem?mr=#1}{#2}
}
\providecommand{\href}[2]{#2}

\end{document}